\numberwithin{equation}{section}
\newcommand{\disp}{\displaystyle}
\newcommand{\p}{\partial}
\newcommand{\E}{\mathds{E}}
\renewcommand{\H}{\mathds{H}}
\newcommand{\N}{\mathds{N}}
\renewcommand{\P}{\mathds{P}}
\newcommand{\R}{\mathds{R}}
\newcommand{\Z}{\mathds{Z}}
\renewcommand{\1}{\mathds{1}}
\newcommand{\bH}{\bold{H}}
\newcommand\cA{{\mathcal A}}
\newcommand\cD{{\mathcal D}}
\newcommand\cF{{\mathcal F}}
\newcommand\cL{{\mathcal L}}
\newcommand\cP{{\mathcal P}}
\newcommand\cR{{\mathcal R}}
\newcommand\fB{{\mathfrak B}}
\newcommand\fD{{\mathfrak D}}
\newtheorem{theorem}{Theorem}[section]
\crefname{theorem}{Theorem}{Theorems}
\crefname{proof}{Proof}{Proofs}
\newtheorem{proposition}{Proposition}[section]
\crefname{proposition}{Proposition}{Propositions}
\newtheorem{lemma}{Lemma}[section]
\crefname{lemma}{Lemma}{Lemmas}
\crefname{equation}{equation}{equations}
\crefname{section}{Section}{Sections}
\crefname{remark}{Remark}{Remarks}
\newtheorem{definition}{Definition}[section]
\crefname{definition}{Definition}{section}
\newtheorem{assumption}{Assumption}[section]
\crefname{assumption}{Assumption}{Assumptions}
\crefname{hypothesis}{Hypothesis}{Hypothesis}
\crefname{example}{Example}{Example}
\crefname{note}{Note}{Notes}
\crefname{corollary}{Corollary}{Corollary}
\renewcommand{\abstractname}
\begin{document}
	\title{Spatial modeling of cholera epidemic : \\ A Law of Large Numbers}
	\author{Mac Jugal Nguepedja Nankep , Boris Kouegou Kamen}
	\maketitle

\begin{abstract}
In this paper we propose a Stochastic model for studying a spatial cholera epidemic spreading where communities (Humans and Bacteria)  are spatially distributed on a one-dimensional lattice and the bacteria are transported along a network links that are thought as the hydrological connection in the studied area. We prove a Law of Large numbers which  suggest that  in large communities (both Human and Bacteria) the stochastic model  behave as a deterministic spatial  model  proposed and studied in the literature by \cite{BerCasGatRodRin2010}  and therefore it is    quite unavoidable  to ask how large fluctuations effect can occur between these two version. That question will be treated in a forthcoming work as large deviations estimates.   We also discuss at the end of the work different possible scaling that could be treated using similar mathematical tools and which lead to different limits.  
\end{abstract}

\section{Introduction}

Until now Deterministic models have always been used when studying infectious disease outbreak dynamics and formulating outbreak response options. As it is well known all epidemic models are inherently stochastic at the level of individuals and then it is  more realistic to have stochastic models.
We are interested in modelling the spreading of Cholera in an endemic area where it is usually known there is little hydraulic hygiene. Cholera is an  acute intestinal infection  caused by a bacterium called \textit{Vibro choler\ae} which is most commonly transmitted orally by ingestion of water or contaminated food, or by contact with any liquid of an infected host (saliva, sweat, etc...). 
\textit{Vibro choler\ae} lives and spreads in the water where she's endowed with an incredible capacity for survival. Rivers, streams and groundwater or any source of water contaminated by Human defects are its  preferred reservoirs. 
Several deterministic works  showed the importance of the spatial distribution of humans and reservoirs in the spread of this disease in endemic areas. 


A deterministic model is proposed by \cite{BerCasGatRodRin2010}, where spatial dynamic is effectively considered for bacteria only, through water reservoirs. As their spatial transport is asymmetric, bacteria population evolves according to a reaction-advection-diffusion partial differential equation. The model is of type SIRSB, where susceptible (S) can be infected directly by contact with bacteria (B) or infectious (I). Infectious can recover and are called recovered (R). These latters are immune within a period. In \cite{BerCasGatRodRin2010}, the authors focus on the epidemic period of the disease, which is shorter than the period of immunity, so that they need not consider the compartment (R) of recovered people. Furthermore, bacteria transportation are oriented and their probabilities and rates depend on the directions.

In this chapter, the stochastic counterpart of the model of \cite{BerCasGatRodRin2010} is developped. We study its large population and long time asymptotic behaviours. With respect to the long time behaviour, we explicitly consider the compartment (R) of recovered people, since they can loose their immunity with time, and become susceptible again. The model has two population time scales, the one of  humans denoted $H$, 
and the one of bacteria denoted $K$.
The main results are stated under the condition that $H$ scales with $K$. 
However, other possibilities are discussed and other deterministic limits are identified as well as hybrid limits.

The rest of the chapter is organised as follows. Section 2 is devoted to modeling. After recalling the model of \cite{BerCasGatRodRin2010}, we present our model, which is its stochastic counterpart. In section 3, a law of large numbers (LLN) is established. Relying on chapter 2 or \cite{Blount1992}, we prove that the stochastic model well renormalized converges, in large populations limit, to its deterministic corresponding, in the supremum norm. In section 4, a corresponding large deviation principle (LDP) is investigated. We first present a rate function candidate. Then, we proceed to the upper 
bound estimates. Technical computations are posponed to the Appendix section at the end.

\hspace{1cm}\\

\noindent \underline{\textbf{\textit{Some general notations.}}} 
Let $(Z,\Vert\cdot\Vert_Z)$ and $(\tilde{Z},\Vert\cdot\Vert_{\tilde{Z}})$ be Banach spaces. The product space $Z\times\tilde{Z}$ is equipped with the norm $\Vert\cdot\Vert_Z+\Vert\cdot\Vert_{\tilde{Z}}$. We introduce:
\begin{description}
\item[$\bullet$] $\mathcal{L}(Z,\tilde{Z})$: the space of continuous linear maps from $Z$ to $\tilde{Z}$. If $Z=\tilde{Z}$, one simply writes $\mathcal{L}(Z)$.
The operator norm is denoted $\Vert\cdot\Vert_{Z\rightarrow\tilde Z}$ and when there is no risk of confusion, we denote it $\Vert\cdot\Vert$.


\item[$\bullet$] $\fB(Z)$ (resp. $\fB_b(Z)$): the space of Borel-measurable (resp. bounded Borel-measurable) real valued functions on $Z$. The space $\fB_b(Z)$ is endowed with the supremum norm \[ \Vert f\Vert_{\fB_b(Z)}=\sup_{x\in Z}|f(x)|=\Vert f\Vert_\infty. \]

\item[$\bullet$] $\disp C_b^k(Z)$, $k\in \N$: the space of real valued functions of class $C^k$, i.e. $k$-continuously Fr\'echet differentiable,  on $Z$ which are bounded and have uniformly bounded succesive differentials. It is equipped with the norm 
\[ \Vert f\Vert_{C_b^k(Z)}=\sum_{i=0}^k\big\Vert D^{i}f\big\Vert_\infty=:\sum_{i=0}^k\Vert f\Vert_{i,\infty}, \] 
where $D^{i}f$ is the $i$-th differential of $f\in C_b^k(Z)$, $\disp C_b^0(Z)=C_b(Z)$ is the set of bounded continuous real valued functions on $Z$, and we are using the notation $\Vert f\Vert_{i,\infty}:=\Vert D^{i}f \Vert_\infty$. 


\item[$\bullet$] $C^{l,k}(Z\times\tilde{Z})$, $l,k\in \N$: the set of real valued functions $\varphi$ of class $C^l$ w.r.t\footnote{with respect to}. the first variable and of class $C^k$ w.r.t. the second. In particular, $C^{0,0}(Z\times\tilde{Z)})=C(Z\times\tilde{Z)})$. \par 

For $(z,\tilde{z})\in Z\times\tilde{Z}$, we donote by $D^{l,k}\varphi(z,\tilde{z})$ the (Fr\'echet) differential of $\varphi$, of order $l$ w.r.t. $z$ and of order $k$ w.r.t. $\tilde{z}$, computed at $(z,\tilde{z})$.\par  

Also, a subscript $b$ can be added - to obtain $C_b^{l,k}(Z\times \tilde{Z})$ - in order to specify that the functions and their succesive differentials are uniformly bounded. 
 

\item[$\bullet$] $\disp C(J)$ (resp. $C^k(J)$): the set of periodic continuous (resp. $C^k$) real valued functions defined on $J=[0,1]$.

\item[$\bullet$] $\bold C(J):=C(J)\times C(J)\times C(J) \times C(J)$. 

\item[$\bullet$] $\disp C_p(J)$: the set of piecewise continuous real valued functions defined on $J=[0,1]$. It is equipped with the supremum norm.

\item[$\bullet$] $\bold C_p(J):=C_p(J)\times C_p(J)\times C_p(J) \times C_p(J)$.

\item[$\bullet$] $L^2(J)$: the set of square integrable real valued functions defined and 1-periodic on $J=[0,1]$. It is endowed with its usual inner product \[ \langle f,g\rangle_2=\int_Jf(x)g(x)dx\hspace{0.5cm} \text{and the induced norm}\hspace{0.5cm} \Vert f\Vert_2=\sqrt{<f,f>_2}. \] 

\item[$\bullet$] $\bold L^2(J):=L^2(J)\times L^2(J)\times L^2(J)\times L^2(J)$ is a Hilbert space with natural inner product 
\[ \langle f,g\rangle_{\bold 2}:=\sum_{j=1}^4\langle f_j,g_j\rangle_2 \hspace{0.5cm}\text{and the induced norm}\hspace{0.5cm}\Vert f\Vert_{\bold 2}:=\sum_{j=1}^4\Vert f_j\Vert_2, \] 
defined for $f=\big(f_1,\cdots,f_4\big), g=\big(g_1,\cdots,g_4\big)\in \bold L^2(J)$. 



\item[$\bullet$] $\disp D\big(\R_+,Z\big)$: the set of \textit{c\`adl\`ag} processes defined on $\R_+$ and taking values in $Z$. It is endowed with the Skorohod topologie. 
 
\end{description}

\section{Modeling and convergence tools}

We first present existing deterministic models as given in \cite{BerCasGatRodRin2010}, where the authors study the disease spread in a population of Humans interacting with a population of bacteria in reservoirs, in some endemic area $J\subset \R^d$. We consider a one dimensional spatial domain - $d=1$ - and take $J=[0,1]$, the unit interval. We also consider periodic boundary conditions. That is we are interested in functions that are $1$-periodic w.r.t. the space variable. 

Given any spatio-temporal coordinate $(t,x)\in\R_+\times J$, human living population is subdivided into classes, according to their disease status: \vspace{0.1cm}\par 
$\bullet$ $S(t,x)$ : is the number of humans who are susceptible to catch the disease,\par 
$\bullet$ $I(t,x)$ : is the number of humans who are infected and therefore are infectious too,\par 
$\bullet$ $R(t,x)$ : is the number of infected humans who received treatment and are still alive. \vspace{0.1cm}\par 

\noindent Human total population is $\bH(t,x)=S(t,x)+I(t,x)+R(t,x)$. Its average initial value  \[ H:=\int_J\bH(0,x)dx \] will turn out to be of particular importance. 
In addition, we denote by $B(t,x)$ the population of \textit{ vibrio choler\ae} for every time-space coordinate $(t,x)\in\R_+\times J$. Bacteria live in water reservoirs. Commonly, $S$, $I$, $R$ and $B$ are refered to, as the compartments of the model. We often use the notation $S(t)=S(t,\cdot)$, and similar notation for the other compartments. The functions $S(t)$, $I(t)$, $R(t)$ and $B(t)$ are considered to be periodic, of period $1$.

In \cite{BerCasGatRodRin2010}, the class $R$ of recovered is not considered, since the authors focus on the population behavior over the epidemic period, and this latter is less important than the time before the loss of immunity. Contrariwise, we have to consider that class, as we are interested in large deviations study, which involves a long time behavior.


\subsection{Deterministic model}\label{DeterministicModel}

\noindent \textbf{\textit{Homogeneous model.}} 
Only a global description of the system is needed, and the quantities of interest are concentrations. Human and bacteria are assumed to be spatially homogeneous, meaning that for all $(t,x)\in \R_+\times J$, one has $S(t,x)=S(t)$. A similar relation holds for $I$, $R$, and $B$. In this context, the human average initial population is simply the human initial total population: \[ H=\bH(0). \] 
The figure below describes the transition mechanisms that the different compartments undergo.
 \begin{center}    
 	\setlength{\unitlength}{0.45cm}
 	\linethickness{0.2mm}
 	\dashbox(28,17){
 		\begin{picture}(20,17)(1,-10)
 		\put(4,2.5){\vector(1,0){1}}
 		\put(5,1){\framebox(3,3){\large{$\text{S}_{ }$}}} 
 		\put(8,2.5){\vector(1,0){4}}
 		\put(12,1){\framebox(3,3){\large{$\text{I}_{ }$}}}
 		\put(15,2.5){\vector(1,0){4}}
 		\put(19,1){\framebox(3,3){\large{$\text{R}_{ }$}}}
 		\put(8.8,3){\small $\lambda_{(B)} S$} 
 		\put(15.8,3){\small $\gamma I$} 
 		\put(12,-4){\framebox(3,3){\large{$\text{B}$}}}
 		
 		\put(6.5,4){\vector(0,1){1}}
 		\put(13.5,4){\vector(0,1){1}}
 		\put(20.5,4){\vector(0,1){1}}  
 		\put(20.5,1){\vector(0,-1){1}}				 
 		\put(19.8,-1){\small $\rho R$}
 		\put(20.5,-1.5){\vector(0,-1){6.5}}        
 		\put(20.5,-8){\vector(-1,0){14}}
 		\put(6.5,-8){\vector(0,1){9}}
 		%
 		\put(13.5,-4){\vector(0,-1){1}}
 		
 		\linethickness{0.01mm}
 		\put(12,-1){\vector(-4,2){4}}           
		\put(13.5,1){\vector(0,-1){2}} 
 		\put(-1,2.3){\small $\mu(S+I+R)$}  
 		\put(6,5.1){\small $\mu S$}          
 		\put(12,5.1){\small $(\mu+\alpha)I$}			
 		\put(19.8,5.1){\small $\mu R$} 
 		
 		\put(12.5,-6){\small $\mu_B B$} 
 		\end{picture} }
 \end{center}

\noindent Here, \vspace{0.1cm} \par 

$\bullet$ $\mu$ is the natural death and birth rate per human. \par 
$\bullet$ $\alpha$ is the cholera-mortality rate per infected.\par 
$\bullet$ $\gamma$ is the rate  at which infected recover health.\par 
$\bullet$ $\rho$ is the rate at which a recovered loses his immunity.\par 
$\bullet$ $p/W$ is the rate at which an infected contributes to the concentration of vibrios. Bacteria produced by an infected person reach and contaminate a water reservoir of volume $W$, at rate $p$.\par 
$\bullet$ $\lambda_{(B)}(t) = \beta \frac{B(t)}{K + B(t)}$ is the rate at which susceptible people become infected. Therein, $\beta$ is the rate of contacts with contaminated water per susceptible, $K$ is the capacity of bacteria concentration in the area, and $\frac{B(t)}{K + B(t)}$ is the logistic dose-response curve. Such a curve links the probability of becoming infected to the concentration of vibrios B in water (see \cite{Codeco2001}).\par 
$\bullet$ $\mu_B$ is the death rate  per \textit{vibrio choler\ae}. This parameter actually takes into account both the reproduction and the death of free-living vibrios. However, these latter reproduce in water at a smaller rate than that of their mortality. So basically, they "only" die.\vspace{0.2cm}\par

In such context, the spread of the disease is described by the system of ordinary differential equations (ODEs) of concentrations:
\begin{equation}\label{FirstOde}
 \left\{ \begin{array}{lcl}
			\displaystyle \frac{dS(t)}{dt} &=& 	\displaystyle     \mu I(t) + (\mu+\rho)R(t) 	- \lambda_{(B)}(t)S(t) \vspace{0.2cm}\\
			\displaystyle \frac{dI(t)}{dt} &=& \displaystyle \lambda_{(B)}(t)S(t)  - (\gamma + \alpha + \mu)I(t) \vspace{0.2cm}\\
			\displaystyle \frac{dR(t)}{dt} &=& \displaystyle \gamma I(t) - (\mu+\rho)R(t) \vspace{0.2cm}\\
			\displaystyle \frac{dB(t)}{dt} &=& \displaystyle -\mu_B B(t) + \frac{p}{KW}I(t).
		  \end{array}
 \right.
\end{equation}

\noindent \textbf{\textit{Renormalization.}} Proportions are often described in practice, in the place of the number of individuals. One switches from the latter to the former by setting 
\[ S^*(t)=\frac{S(t)}{H},\hspace{1cm} I^*(t)=\frac{I(t)}{H}, \hspace{1cm} R^*(t)=\frac{R(t)}{H}\hspace{1cm}\text{and}\hspace{1cm}B^*(t)=\frac{B(t)}{K}. \] 
As a result, the rates of events for the rescaled variables are of order $H$ (resp. $K$) for humans (resp. bacteria). The rescaled variables also depend on the parameters $H$ and $K$, but we do not mention these latter throughout this work. The renormalized version of \eqref{FirstOde} is given by
\begin{equation}\label{FirstOdeRenormalized}
 \left\{ \begin{array}{lcl}
			\displaystyle \frac{dS^*(t)}{dt} &=& 	\displaystyle     \mu I^*(t)) + (\mu+\rho)R^*(t) 	- \lambda_{(B^*)}(t)S^*(t) \vspace{0.2cm}\\
			\displaystyle \frac{dI^*(t)}{dt} &=& \displaystyle \lambda_{(B^*)}(t)S^*(t)  - (\gamma + \alpha + \mu)I^*(t) \vspace{0.2cm}\\
			\displaystyle \frac{dR^*(t)}{dt} &=& \displaystyle \gamma I^*(t) - (\rho + \mu)R^*(t) \vspace{0.2cm}\\
			\displaystyle \frac{dB^*(t)}{dt} &=& \displaystyle -\mu_B B^*(t) + \frac{H p}{KW}I^*(t),
		  \end{array}
 \right.
\end{equation}
where $\lambda_{(B^*)}(t)=\beta\frac{B^*(t)}{1+B^*(t)}$.\\

\noindent\textbf{\textit{Spatial model.}} 
In this context, a local description is given, and compartments are now functions of the space variable. Periodic boundary conditions are considered. We discretize $J$ w.r.t. the well known one dimensional lattice regular subdivision in $N$ parts. There are $N$ nodes, indexed by $i$, and the subintervals $J_i=\big((i-1)/N,i/N\big]$, $1\leq i\leq N$ are called sites. We view each node as a site which has been concentrated at a point in such a way that the distance between two neighboring nodes is constant and equal to the length of each site. We equivalently consider the two notions and say node as well as site. \par 

In accordance with the precedent notation, $S_i$, $I_i$,  $R_i$, and $B_i$ are the number of susceptible, infectious , recovered, and bacteria on node $i$, respectively. Note that these quantities actually depend on the parameter $N$ of the subdivition. Also, on a node $i$, the human total population is $\bH_i$, and the water volume is $W_i$. Periodicity at the boundary allows us to consider that $S_{i+N}=S_i$ for all $i\in \Z$. The same holds for $I_i$, $R_i$, and $B_i$.

As previously, proportions are captured by rescaling. The average initial human population on each site is 
\[ H=\frac{1}{N}\sum_{i=1}^N\bH_i(0)=\frac{\bH(0)}{N}, \] 
and we set $\disp S_i^{*}(t)=S_i(t)/H$, $\disp I_i^{*}(t)=I_i(t)/H$, $\disp R_i^{*}(t)=R_i(t)/H$, and, $\disp B_i^{*}(t)=B_i(t)/K$. Henceforth, only rescaled variables are considered. Thus, we forget about the superscript * in our notation and write $S_i$, $I_i$, ... , instead of $S^{*}$, $I^{*}$, ... We make the following assumption.

\begin{assumption}\label{LatticeOrientation}  \hspace{1cm} \vspace{0.1cm}\par 
(i) The lattice is oriended and we fix an orientation, in order to perform precise computations: the direction of the edges follows the increasing numerical order of nodes index \[ \cdots (i-1) \longrightarrow i \longrightarrow (i+1)\cdots \]
\indent (ii) Vibrios can move with a certain probability from a node to a connected node, through an inward or an outward edge, at a certain rate $\ell$.
\end{assumption}

\Cref{LatticeOrientation} is not restrictive from regarding to the space dimension. In fact, as in \cite{BerCasGatRodRin2010}, the following tools are usable in a higher spatial dimensional context. We define: \vspace{0.1cm}\par 
$\bullet$ $d_{in}(i)$ the number of inward edges of node $i$,\par 
$\bullet$ $d_{out}(i)$ the number of outward edges of node $i$,\par 
$\bullet$ $\mathcal{P}_{in}$ the transmission probability by an inward edge,\par 
$\bullet$ $\mathcal{P}_{out}$ the transmission probability by an outward edge. \vspace{0.1cm}\par 

\noindent Clearly, $d_{in}(i)=1=d_{out}(i)$ for all $i=1,\cdots,N$ in a one dimensional framework.
 
The probability that a  propagule transits from a node $i$ to another one $j$ has the form: 
  \begin{equation}\label{TransitionProbabilities}
  \mathcal{P}_{ij} = \left\{ 
  \begin{array}{ll}
	\displaystyle \frac{\mathcal{P}_{out}}{\mathcal{P}_{out}d_{out}(i) + \mathcal{P}_{in} d_{in}(i)} & \text{ if } i\to j \vspace{0.2cm}\\
	\displaystyle  \frac{\mathcal{P}_{in}}{\mathcal{P}_{out}d_{out}(i) + \mathcal{P}_{in}d_{in}(i) } & \text{ if } i \leftarrow j \vspace{0.2cm}\\
	0 &\text{ otherwise. }
\end{array} \right.
 \end{equation}
Of course, we have $\sum_{j=1}^{N}\mathcal{P}_{ij} =1$, since $\cP_{in}+\cP_{out}=1$. \par 
 
Infectious propagules are removed at every node -- at rate $\ell$ -- and transported through the network following the transition probabilities \eqref{TransitionProbabilities}. Hence, replacing probabilities with frequencies, the corresponding -- in this spatial context -- to the rescaled system \eqref{FirstOdeRenormalized} reads
\begin{equation}
 \left\{  \begin{array}{lcl}
			   \displaystyle \frac{dS_i(t)}{dt} &=&\displaystyle  \mu I_i(t) + (\mu+\rho)R_i(t) - \beta\frac{B_i(t)}{1 + B_i(t)}S_i(t) \vspace{0.2cm} \\
			   \displaystyle \frac{dI_i(t)}{dt} &=& \displaystyle \beta \frac{B_i(t)}{1 + B_i(t)}S_i(t)  - (\gamma + \alpha + \mu)I_i(t) \vspace{0.2cm}\\
			   \displaystyle \frac{dR_i(t)}{dt} &=& \displaystyle \gamma I_i(t) - (\mu+\rho)R_i(t) \vspace{0.2cm}\\
			   \displaystyle \frac{dB_i(t)}{dt} &=& \displaystyle -\mu_B B_i(t) + \frac{Hp}{KW_i}I(t) - \ell B_i(t)  + \sum_{j=1}^N\ell\mathcal{P}_{ji}B_j(t)\frac{W_j}{W_i}.
			 \end{array}
 \right.
\end{equation}

In order to derive a continuous-space model, we introduce $b=P_{out}-P_{in}=2P_{out}-1$, the bias of the transport to follow the edge direction (the reader is refered to \cite{BerCasGatRodRin2010} and references therein, for more details about the parameter $b$). We take the limit $N\rightarrow\infty$ under the conditions:

\begin{assumption}\label{ConstantReservoirOnSitesAndConvergenceConditions}\hspace{1cm}\par 
(i) Water reservoir volume is constant on each site: $W_i=W$ for all $ i=1,\cdots,N$.\par 
(ii) The transport rate $\ell$ scales with $N^2$ and we put $\fD=\ell/2N^2$.\par 
(iii) The product $b\cdot\ell$ is of order $N$ and we put $\nu=b\cdot\ell/N$.
\end{assumption}

Then, as the site length goes to zero, we obtain the limit system of partial differential equations (PDEs) 
\begin{equation}\label{RenormalizedDerterministicPDE}
 \left\{  \begin{array}{lcl}
			   \displaystyle \frac{\p S(t,x)}{\p t} &=&\displaystyle  \mu I(t,x) + (\mu+\rho) R(t,x) - \beta\frac{B(t,x)}{1 + B(t,x)}S(t,x) \vspace{0.2cm} \\
			   \displaystyle \frac{\p I(t,x)}{\p t} &=& \displaystyle \beta\frac{B(t,x)}{1 + B(t,x)}S(t,x)  - (\gamma + \alpha + \mu)I(t,x) \vspace{0.2cm}\\
			   \displaystyle \frac{\p R(t,x)}{\p t} &=& \displaystyle \gamma I(t,x) - (\rho + \mu)R(t,x) \vspace{0.2cm}\\
			   \displaystyle \frac{\p B(t,x)}{\p t} &=& \displaystyle -\mu_B B(t,x) + \frac{H p}{KW}I(t,x) - \nu \frac{\p B(t,x)}{\p x} + \fD  \frac{\p^2 B(t,x)}{\p x^2}, \end{array}
\right.
\end{equation}
where $\fD>0$ and $\nu>0$ are respectively the diffusion coefficient and the advection velocity of the bacteria.\\

\noindent\textbf{\textit{Well-posedness, generator and debit function.}} A compact form of \eqref{RenormalizedDerterministicPDE} reads
\begin{equation}\label{CompactRenormalizedDeterministicPDE}
\disp \frac{dv(t)}{dt}= \tilde{A} v(t)+F\big(v(t)\big),
\end{equation}

where $v=(S,I,R,B)$, $F= (F_S,F_I, F_R, F_B)$ is the vector field in $\R^4$ given for $y\in(y_1,y_2,y_3,y_4)\in\R^4$ by
\[ 
F(y)=
\begin{pmatrix}
\disp F_S(y) \vspace{0.1cm}\\ \disp F_I(y) \vspace{0.1cm}\\ \disp F_R(y) \vspace{0.1cm}\\ \disp F_B(y)
\end{pmatrix} 
=
\begin{pmatrix}
\disp \mu y_2 + (\mu+\rho)y_3 -\beta\frac{y_4}{1+y_4}y_1\vspace{0.1cm}\\ 
\disp \beta\frac{y_4}{1+y_4}y_1 - (\gamma + \alpha + \mu)y_2 \vspace{0.1cm}\\ 
\disp \gamma y_2 - (\mu+\rho)y_3 \vspace{0.1cm}\\ 
\disp -\mu_B y_4 + \frac{H p}{KW}y_2
\end{pmatrix},
\]
and, $\tilde{A}$ is the diagonal matrix operator of size $4$ on $\bold L^2(J)$, given by 
\[ \tilde{A}=\text{diag}(0,0,0,A)\hspace{1cm}\text{with}\hspace{1cm} A=-\nu\nabla+\fD\Delta. \] 
In our notation, $0$ is the identically zero operator, $\nabla$ is the gradient, and $\Delta$ is the Laplace.

\medskip
We say a $\mathds R^4-$vector $y$ is positive, and we write $y\geq0$, when $y_i\geq0$ for $i=1,\cdots,4$. For consistency, only positive initial conditions are considered. Let $\R_+^4:=\big\{y\in\R^4: y\geq0\big\}$. Then it is not difficult to see that the restriction of $F$ to $\R_+^4$ is of class $C^\infty$. Furthermore, $F$ satisfies
\begin{equation}\label{ReactionDebitBoundProperties}
\begin{array}{ll}
(i) & \disp F_S(y)\geq 0 \hspace{0.2cm}\text{if}\hspace{0.2cm} y_1= 0,\hspace{0.2cm}\forall y=(y_1,\cdots,y_4)\in\R_+^4.\vspace{0.1cm}\\
(ii) & \disp \text{At (i), } (S,y_1) \text{ can be replaced by }(I,y_2), \hspace{0.1cm} (R,y_3) \text{ or } (B,y_4).\vspace{0.2cm}\\
(iii) & \disp \exists M>0, \hspace{0.2cm}\text{such that}\hspace{0.2cm} |F(y)|\leq M|y| \hspace{0.2cm}\forall y\geq0,
\end{array}
\end{equation}
where $|\cdot|$ is the norm of $\R^4$, and the constant $M$ depends on $\mu$, $\rho$, $\beta$, $\alpha$, $\gamma$, $\rho$, $H/K$ and $p/W$. Relations (i) and (ii) are trivial. Concercing (iii), for $y=(y_1,\cdots,y_4)\geq0$, $y_4\geq0$ yields $\frac{y_4}{1+y_4}\leq 1$. Then, one easily sees that there exists constants $M_S=M_S(\mu,\rho,\beta)$, $M_I=M_I(\beta,\gamma,\alpha,\mu)$, $M_R=M_R(\gamma,\rho,\mu)$ and $M_B=M_B\big(\mu_B,\frac{Hp}{KW}\big)$, such that $|F_S(y)|\leq M_S|y|$, $|F_I(y)|\leq M_I|y|$, $|F_R(y)|\leq M_R|y|$ and $|F_B(y)|\leq M_B|y|$, for all $y\geq0$.

\medskip
In addition, it is well known (see e.g. \cite{Pazy1983} Chapter 7, Theorem 3.7) that the differential operator $A$ on $C(J)$ is the generator of an analytic semigroup $\big\{ T(t)=e^{At},t\geq0\big\}$ which is bounded, uniformly in $t$.
%
%
Therefore, it is not difficult to see that $\tilde{A}$ is also the generator of an analytic semigroup 
on $\bold C(J)$, and we denote it $\tilde{T}(t)=e^{\tilde{A}t}$. This latter is also bounded uniformly in $t$. Let $c_1>0$ be such that $\Vert T(t)\Vert \leq c_1$ for all $t\geq 0$, where $\Vert\cdot\Vert$ stands for the operator norm. The following result of existence and uniqueness is derived then.

\begin{proposition}\label{Well-posednessOfCompactRenormalizedDeterministicPDE}
Consider \eqref{CompactRenormalizedDeterministicPDE} on $\bold C(J)$, with an initial condition $v(0)=v_0\in \big[C(J)\big]^3\times C^3(J)$ such that $v_0\geq0$ and $\Vert v_0\Vert_{\bold C(J)}<c_0$ for some $c_0\gg1$. We also consider periodic boundary conditions: $v_0(t,0)=v_0(t,1)$ for all $t\geq0$. 
Then, the described Cauchy problem has a unique global mild solution $v\in C\big(\R_+; \big[C(J)\big]^3\times C^3(J)\big)$ satisfying
\begin{equation}\label{MildRepresentationOfDeterministicModel}
v(t)=\tilde{T}(t)v_0+\int_0^t\tilde{T}(t-s)F\big(v(s)\big)ds
\end{equation}
and $v(t)\geq0$ for all $t\geq0$, and, for all $T>0$, 
\begin{equation}\label{BoundOfDeterministicModel}
\sup_{t\leq T}\Vert v(t)\Vert_{\bold C(J)}<c_T=c_1c_0\text{e}^{c_1MT}.
\end{equation}
\end{proposition}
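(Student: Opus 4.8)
The plan is to read \eqref{CompactRenormalizedDeterministicPDE} as an abstract semilinear Cauchy problem $\dot v=\tilde A v+F(v)$ on $\bold C(J)$ and to run the classical three-step programme for such equations: local existence and uniqueness by a contraction argument, invariance of the positive cone, and a global a priori bound that both promotes the local solution to a global one and produces \eqref{BoundOfDeterministicModel}. For the first step, recall that the restriction of $F$ to $\R_+^4$ is $C^\infty$, hence locally Lipschitz on bounded sets; after extending $F$ off a large ball to a globally Lipschitz map (the a priori bound below will show the solution never sees the modification), I would set
\[ (\Phi v)(t)=\tilde T(t)v_0+\int_0^t\tilde T(t-s)F\big(v(s)\big)\,ds \]
on $C\big([0,t_1];\bold C(J)\big)$. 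Using $\Vert\tilde T(t)\Vert\le c_1$ together with the Lipschitz bound and $F(0)=0$, one checks that $\Phi$ leaves a suitable closed ball invariant and is a contraction for $t_1$ small; Banach's fixed point theorem yields a unique mild solution satisfying \eqref{MildRepresentationOfDeterministicModel} on $[0,t_1]$, which is then continued to a maximal interval $[0,t_{\max})$.

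The structural heart of the proof is the invariance of the positive cone, i.e. $v_0\ge0\Rightarrow v(t)\ge0$ on $[0,t_{\max})$. The mechanism is the quasi-positivity of $F$ encoded in properties (i)--(ii) of \eqref{ReactionDebitBoundProperties}: for $y\ge0$ one may write $F(y)=-D(y)\,y+G(y)$ with
\[ D(y)=\mathrm{diag}\Big(\beta\tfrac{y_4}{1+y_4},\,\gamma+\alpha+\mu,\,\mu+\rho,\,\mu_B\Big)\ge0 \]
and $G(y)\ge0$ componentwise. Since $A=-\nu\nabla+\fD\Delta$ generates a positivity-preserving semigroup on $C(J)$ (advection--diffusion on the periodic interval), $\tilde T$ preserves the cone, and subtracting the nonnegative diagonal potential $D(v(t))$ keeps this property for the non-autonomous evolution family $U(t,s)$ it generates. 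Rewriting the mild formula through $U$ as $v(t)=U(t,0)v_0+\int_0^tU(t,s)G\big(v(s)\big)\,ds$ and invoking $v_0\ge0$, $G(v(s))\ge0$ then forces $v(t)\ge0$. I expect this step to be the main obstacle: one must justify the positivity-preserving evolution family carefully, and positivity is not a cosmetic add-on but indispensable, because the linear growth estimate $|F(y)|\le M|y|$ holds only on $\R_+^4$.

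With positivity in hand the rest is quantitative. Property (iii) of \eqref{ReactionDebitBoundProperties}, applied pointwise in $x$ and followed by taking suprema, gives $\Vert F(v(s))\Vert_{\bold C(J)}\le M\Vert v(s)\Vert_{\bold C(J)}$, so \eqref{MildRepresentationOfDeterministicModel} and $\Vert\tilde T\Vert\le c_1$ yield
\[ \Vert v(t)\Vert_{\bold C(J)}\le c_1\Vert v_0\Vert+c_1M\int_0^t\Vert v(s)\Vert_{\bold C(J)}\,ds. \]
Gronwall's lemma then produces $\Vert v(t)\Vert_{\bold C(J)}\le c_1c_0\,e^{\,c_1Mt}$, which is exactly \eqref{BoundOfDeterministicModel}; being finite on every bounded interval it excludes finite-time blow-up, so $t_{\max}=+\infty$ and the solution is global. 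It remains to verify the regularity $v\in C\big(\R_+;[C(J)]^3\times C^3(J)\big)$: the $S,I,R$ components carry no spatial derivative and merely inherit the continuity of the data, while for $B$ the constant-coefficient operator $A$ commutes with spatial differentiation and generates an analytic semigroup on $C^3(J)$ as well, so the $C^3$-regularity of $B_0$ is propagated through \eqref{MildRepresentationOfDeterministicModel}. This final bookkeeping is routine but is precisely where the $C^3$ hypothesis on the initial datum is used.
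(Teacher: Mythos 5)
Your proposal is correct and follows essentially the same route as the paper: a Banach fixed-point argument for local existence using the local Lipschitz character of $F$ restricted to $\R_+^4$, invariance of the positive cone via the quasi-positivity properties \eqref{ReactionDebitBoundProperties} (i)--(ii), and a Gronwall bound from \eqref{ReactionDebitBoundProperties} (iii) yielding both \eqref{BoundOfDeterministicModel} and globality. Your explicit decomposition $F(y)=-D(y)\,y+G(y)$ with a positivity-preserving evolution family is a more detailed justification of the positivity step, which the paper dispatches in a single sentence; the final $C^3$-regularity bookkeeping is treated equally briefly in both arguments.
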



\begin{proof}\normalfont
Positivity follows from \eqref{ReactionDebitBoundProperties} (i)-(ii), $v_0\geq0$ and the continuity of any solution. We restrict $F$ to $\R_+^4$. Thus, $F$ is locally Lipschitz and 
from \eqref{ReactionDebitBoundProperties} (iii), there exists a unique local mild solution to \eqref{CompactRenormalizedDeterministicPDE}, which lies in $C\big([0,T],\bold C(J)\big)$ for all $T\in (0,T(c_0)]$ for some $T(c_0)>0$. This follows using a Banach fixed point argument. 
Then, we obtain the bound \eqref{BoundOfDeterministicModel} through a Gronwall-Bellman lemma, thanks to \eqref{ReactionDebitBoundProperties} (iii). Therefore, the solution is global. Finally, the solution actually lies in $C\big(\R_+;\big[C(J)\big]^3\times C^3(J)\big)$ by continuous dependence w.r.t. the initial condition, since this latter satisfies $v_0\in \big[C(J)\big]^3\times C^3(J)$ and ---the considered restriction of--- $F$ is of class $C^\infty$.
\end{proof}

The infinitesimal generator of the model, or of \eqref{CompactRenormalizedDeterministicPDE}, is given by 
\begin{equation}\label{DeterministicSpatialGenerator}
\disp \cA\varphi(u)=\langle D\varphi(u),\tilde{A}u+F(u)\rangle,
\end{equation} 
on the domain $\disp C_b^1\left(\big[C(J)\big]^3\times C^3(J)\right)$.  
The associated debit function is the vector field in $\bold C(J)$ defined by \[ \psi(u)=\tilde{A}u+F(u). \] 
More precisely, $u\mapsto F(u)$ is the debit related to the fluctuations of $v$ due to events that are spatially homogeneous, while $u\mapsto \tilde{A}u$ is the debit related to the fluctuations created by spatial motions.
A much more specific decomposition is possible for $\psi$. It consists in writing $\psi=(\psi_S,\psi_I,\psi_R,\psi_B)$, where $\psi_S(u)=F_S(u)$, $\psi_I(u)=F_I(u)$, $\psi_R(u)=F_R(u)$ and $\psi_B(u)=Au_B+F_B(u)$ are the debits related to the fluctuations of $S$, $I$, $R$, and $B$ respectively, defined for all $u=(u_S,u_I,u_R,u_B)\in \big[C(J)\big]^3\times C^3(J)$.

\subsection{Stochastic spatial model}

We now introduce the stochastic counterpart of $v=(S,I,R,B)$. The preceding subdivision of the spatial domain is considered. In order to model event randomness, as it is usual we use Poisson processes as random clock to capture events. For each site $i\in\{1,\cdots,N\}$, proportions for compartments are denoted $u_{S,i}^N$, $u_{I,i}^N$, $u_{R,i}^N$, $u_{B,i}^N$. Recall that the parameter of renormalization is $K$ (resp. $H=\bH(0)/N$)  for bateria (resp. humans), so that 
\[ u_{B,i}^N=\frac{\text{number of bacteria on i}}{K},\hspace{0.5cm} u_{S,i}^N=\frac{\text{number of susceptible on i}}{H},\hspace{0.5cm}\cdots \] 

We have $\disp u_{S,i}^N,u_{I,i}^N,u_{R,i}^N\in H^{-1}\N$ and $\disp u_{B,i}^N\in K^{-1}\N$, where \[ H^{-1}\N=\left\{H^{-1}n,n\in\N\right\}\hspace{0.5cm}\text{and}\hspace{0.5cm}K^{-1}\N=\left\{K^{-1}n,n\in\N\right\}. \]
As previously, we are omitting the parameters $H$ and $K$ in our notation. \\

A pointwise modeling over the whole spatial domain is achieved for susceptible through the step function \[ u_S^N(t,x)=\sum_{i=1}^Nu_{S,i}^N(t)\1_i(x), \hspace{0.5cm}t\geq0,\hspace{0.5cm}x\in J, \] where $\1_i(\cdot)=\1_{J_i}(\cdot)$ is the indicator function of the $i$-th site $J_i$. For the compartments (infected, recovered and bacteria), $u_S^N(t,x)$, $u_R^N(t,x)$ and $u_B^N(t,x)$ are defined in a similar way as $u_S^N(t,x)$. Using the standard identification $u_S^N(t):=u_S^N(t,\cdot)$, $u_I^N(t):=u_I^N(t,\cdot)$, ... we have $\big(u_S^N(t), u_I^N(t), u_R^N(t), u_B^N(t)\big)\in \bold L^2(J)$. 
Finally, a global description including all the compartments is given by 

\[ u^N(t)=\big(u_S^N(t),u_I^N(t),u_R^N(t),u_B^N(t)\big), \] \vspace{0cm}

\noindent which is our stochastic model. 
In absence of precision, we consider the natural completed filtration $\big\{\cF_t^N,t\geq0\big\}$, where $\cF_t^N=\cF_t^{N,H,K}$ is the completion of the $\sigma$-algebra $\sigma\left(\big\{u^N(s):s\leq t\big\}\right)$ with null probability measure sets. 

Let $\H^N:=\H^N(J)$ be the subspace of $L^2(J)$ which consists of 
real valued step functions that are constant on each site $J_i$. Introduce the canonical projection
\begin{equation}\label{CanonicalProjection}
\begin{tabular}{lll}
$\displaystyle P_N$ & $:$ & $L^2(J) \longrightarrow \mathds{H}^N$\vspace{0cm}\\
 & & $\displaystyle f\longmapsto P_Nf=\sum_{i=1}^Nf_i\mathds{1}_i,\hspace{0.4cm}\text{where}\hspace{0.4cm} f_i:=N\int_{J_i}f(x)dx$.
\end{tabular}
\end{equation}
Then, $\big(\H^N,\langle\cdot,\cdot\rangle_2\big)$ is an $N$-dimensional Hilbert space. An orthonormal basis of it is $\{e_i:=\sqrt{N}\1_i,1\leq i\leq N\}$. Its inner product is the restriction on $\H^N$ of the inner product of $L^2(J)$, that is $\langle f,g\rangle_2=\frac{1}{N}\sum_{i=1}^Nf_ig_i$. Setting $\H=\cup_{N\geq1}\H^N$, it is not difficult to see that $(\H,\langle\cdot,\cdot\rangle_2)$ is dense in $L^2(J)$. Indeed, every function $f\in L^2(J)$ can be approximated by the sequence $(P_Nf)_N\subset\H$. 

Furthermore, $\H \subset C_p(J)$ and $\Vert P_Nf-f\Vert_\infty\rightarrow 0$ for all $f\in C_p(J)$. As our goal is to perform approximations in the supremum norm, we systematically consider $(\H^N,\Vert\cdot\Vert_\infty)$, unless we specify another topology.

\medskip
In addition, we define $\tilde{P}_N$ on $\bold L^2(J)$ by $\big(u^{1},\cdots,u^{4})\mapsto\tilde{P}_Nu=\big(P_Nu^{1},\cdots,P_Nu^{4})$, and introduce the notation $\bold H^N :=\H^N\times \H^N\times \H^N\times\H^N =: \bold H^N(J)$. According to our preceding discussion, we consider the Banach space $\big(\bold H^N, \Vert\cdot\Vert_{\bold C_p(J)}\big)$ in what follows, and recall that $\Vert\cdot\Vert_{\bold C_p(J)}=\Vert\cdot\Vert_{\bold C(J)}$.

\medskip
Now, for all $t\geq0$, the $i$-th coordinate of $u^N(t)$ with respect to the projection $\tilde{P}_N$ is 
\[ u_i^N(t) \quad = \quad N\int_{J_i}u^N(t,x)dx \quad =\quad \big(u_{S,i}^N(t),u_{I,i}^N(t),u_{R,i}^N(t),u_{B,i}^N(t)\big), \] 
so that $\tilde{P}_Nu^N(t)=u^N(t)$, and  \[ u^N(t,x)=\sum_{i=1}^Nu_{i}^N(t)\1_i(x),\hspace{0.5cm}\forall x\in J. \] 
Therefore, $u^N=\big\{u^N(t),t\geq0\}$ is an $\bold H^N-$valued jump Markov process. Its infinitesimal generator is given on the domain $C_b\big(\bold H^N\big)$ by 
\begin{equation}\label{StochasticSpatialGenerator}
\begin{array}{l}
\disp \cA^N\varphi(u)=\cA_S^N\varphi(u)+\cA_{SI}^N\varphi(u)+\cA_I^N\varphi(u)\vspace{0.2cm}\\
\hspace{3cm}+\cA_{IR}^N\varphi(u)+\cA_R^N\varphi(u)+\cA_{RS}^N\varphi(u)+\cA_B^N\varphi(u),
\end{array}
\end{equation}

\noindent where $u=(u_S,u_I,u_R,u_B)$,
\begin{equation}\label{StochasticSpatialGenerator_S}
\begin{array}{l}
\disp \cA_S^N\varphi(u_S,u_I,u_R,u_B)\\
\disp \hspace{1cm} = \sum_{i=1}^N\left\{\left[\varphi\left(u_S+\frac{1}{H}\1_i,u_I,u_R,u_B\right)-\varphi(u_S,u_I,u_R,u_B)\right]H\mu u_{S,i}\right.\\
\disp \hspace{2.5cm} +\left[\varphi\left(u_S+\frac{1}{H}\1_i,u_I,u_R,u_B\right)-\varphi(u_S,u_I,u_R,u_B)\right]H\mu u_{I,i}\\
\disp \hspace{2.5cm} +\left[\varphi\left(u_S+\frac{1}{H}\1_i,u_I,u_R,u_B\right)-\varphi(u_S,u_I,u_R,u_B)\right]H\mu u_{R,i}\\
\disp \hspace{2.5cm} +\left.\left[\varphi\left(u_S-\frac{1}{H}\1_i,u_I,u_R,u_B\right)-\varphi(u_S,u_I,u_R,u_B)\right]H\mu u_{S,i}\right\},
\end{array}
\end{equation}

\begin{equation}\label{StochasticSpatialGenerator_SI}
\begin{array}{l}
\disp \cA_{SI}^N\varphi(u_S,u_I,u_R,u_B)\\
\disp \hspace{1cm} = \sum_{i=1}^N\left[\varphi\left(u_S-\frac{1}{H}\1_i,u_I+\frac{1}{H}\1_i,u_R,u_B\right)-\varphi(u_S,u_I,u_R,u_B)\right]H\beta\frac{u_{B,i}}{1+u_{B,i}}u_{S,i},
\end{array}
\end{equation}

\begin{equation}\label{StochasticSpatialGenerator_I}
\begin{array}{l}
\disp \cA_I^N\varphi(u_S,u_I,u_R,u_B)\\
\disp \hspace{1cm} = \sum_{i=1}^N\left\{\left[\varphi\left(u_S,u_I-\frac{1}{H}\1_i,u_R,u_B\right)-\varphi(u_S,u_I,u_R,u_B)\right]H\mu u_{I,i}\right.\\
\disp \hspace{2.5cm} +\left.\left[\varphi\left(u_S,u_I-\frac{1}{H}\1_i,u_R,u_B\right)-\varphi(u_S,u_I,u_R,u_B)\right]H\alpha u_{I,i}\right\},\\
\end{array}
\end{equation}

\begin{equation}\label{StochasticSpatialGenerator_IR}
\begin{array}{l}
\disp \cA_{IR}^N\varphi(u_S,u_I,u_R,u_B)\\
\disp \hspace{1cm} = \sum_{i=1}^N\left[\varphi\left(u_S,u_I-\frac{1}{H}\1_i,u_R+\frac{1}{H}\1_i,u_B\right)-\varphi(u_S,u_I,u_R,u_B)\right]H\gamma u_{I,i},\\
\end{array}
\end{equation}

\begin{equation}\label{StochasticSpatialGenerator_R}
\begin{array}{l}
\disp \cA_R^N\varphi(u_S,u_I,u_R,u_B)\\
\disp \hspace{1cm} = \sum_{i=1}^N\left[\varphi\left(u_S,u_I,u_R-\frac{1}{H}\1_i,u_B\right)-\varphi(u_S,u_I,u_R,u_B)\right]H\mu u_{R,i},\\
\end{array}
\end{equation}

\begin{equation}\label{StochasticSpatialGenerator_RS}
\begin{array}{l}
\disp \cA_{RS}^N\varphi(u_S,u_I,u_R,u_B)\\
\disp \hspace{1cm} = \sum_{i=1}^N\left[\varphi\left(u_S+\frac{1}{H}\1_i,u_I,u_R-\frac{1}{H}\1_i,u_B\right)-\varphi(u_S,u_I,u_R,u_B)\right]H\rho u_{R,i},\\
\end{array}
\end{equation}
and finally
\begin{equation}\label{StochasticSpatialGenerator_B}
\begin{array}{l}
\disp \cA_B^N\varphi(u_S,u_I,u_R,u_B)\\
\disp \hspace{0.7cm} = \sum_{i=1}^N\left\{\left[\varphi\left(u_S,u_I,u_R,u_B-\frac{1}{K}\1_i\right)-\varphi(u_S,u_I,u_R,u_B)\right]K\mu_B u_{B,i}\right.\\
\disp \hspace{2cm} +\left[\varphi\left(u_S,u_I,u_R,u_B+\frac{1}{K}\1_i\right)-\varphi(u_S,u_I,u_R,u_B)\right]\frac{Hp}{W} u_{I,i}\\
\disp \hspace{2cm} +\left[\varphi\left(u_S,u_I,u_R,u_B+\frac{\1_{i+1}-\1_i}{K}\right)-\varphi(u_S,u_I,u_R,u_B)\right]K\ell\cP_{out} u_{B,i}\\
\disp \hspace{2cm} +\left. \left[\varphi\left(u_S,u_I,u_R,u_B+\frac{\1_{i-1}-\1_i}{K}\right)-\varphi(u_S,u_I,u_R,u_B)\right]K\ell\cP_{in} u_{B,i}\right\}.\\
\end{array}
\end{equation}

The generator $\cA^N$ can be extended to a generator $\bar{\cA}^N$ on $C_b\big(\bold L^2(J)\big)$, using the projection $\tilde{P}_N$ as follows:\[ \bar{\cA}^N\varphi(u):=\cA^N\varphi(\tilde{P}_Nu), \] for $u\in\bold L^2(J)$. We do not distinguish between the two generators and use the notation $\cA^N$ for both.

\medskip
By construction, when started at a positive state $u^N(0)\in \big(H^{-1}\N\big)\times K^{-1}\N$, the process $u^N$ lies in the same set, and thus keeps positive values. We assume that our process starts from such a state in the sequel.\\

From preceding discussions, we know that the debit function of $u^N$ is the vector field $\psi^N=\big(\psi_S^N,\psi_I^N,\psi_R^N,\psi_B^N\big)$ in $\bold H^N$, such that:
\begin{equation}\label{StochasticSpatialDebit_S}
\begin{array}{ll}
\disp \psi_S^N(u_S,u_I,u_R,u_B) & \disp =\sum_{i=1}^N\left[\mu u_{I,i}+(\mu+\rho)u_{R,i}-\beta\frac{u_{B,i}}{1+u_{B,i}}u_{S,i}\right]\1_i \vspace{0.2cm}\\
\disp & \disp =\mu u_I+ (\mu+\rho)u_R-\beta\frac{u_B}{1+u_B}u_S \hspace{0.5cm} = \hspace{0.5cm} F_S(u_S,u_I,u_R,u_B)
\end{array}
\end{equation}
is the debit related to susceptible,
\begin{equation}\label{StochasticSpatialDebit_I}
\begin{array}{ll}
\disp \psi_I^N(u_S,u_I,u_R,u_B) & \disp =\sum_{i=1}^N\left[\beta\frac{u_{B,i}}{1+u_{B,i}}u_{S,i}-(\mu +\alpha +\gamma)u_{I,i}\right]\1_i\vspace{0.2cm}\\
& \disp =\beta\frac{u_B}{1+u_B}u_S-(\mu+\alpha+\gamma)u_I\hspace{0.5cm}  =  \hspace{0.5cm} F_I(u_S,u_I,u_R,u_B)
\end{array}
\end{equation}
is the debit related to infected,
\begin{equation}\label{StochasticSpatialDebit_R}
\begin{array}{ll}
\disp \psi_R^N(u_S,u_I,u_R,u_B) & \disp  =\sum_{i=1}^N\big[\gamma u_{I,i}-(\mu+\rho) u_{R,i}\big]\1_i \vspace{0.2cm}\\
 & \disp = \gamma u_I-(\mu+\rho)u_R \hspace{0.5cm} = \hspace{0.5cm} F_R(u_S,u_I,u_R,u_B) 
\end{array}
\end{equation}
is the debit related to recovered, and lastly,
\begin{equation}\label{StochasticSpatialDebit_B}
\begin{array}{l}
\disp \psi_B^N(u_S,u_I,u_R,u_B)\vspace{0.1cm}\\
\disp \hspace{1cm}=\sum_{i=1}^N\left[-\mu_Bu_{B,i}\1_i+\frac{H p}{KW}u_{I,i}\1_i+(\1_{i+1}-\1_i)\ell\cP_{out}u_{B,i}+(\1_{i-1}-\1_i)\ell\cP_{in}u_{B,i}\right]\vspace{0cm}\\
\disp \hspace{1cm} = \sum_{i=1}^N\left[-\mu_Bu_{B,i}+\frac{H p}{KW}u_{I,i}+\ell(u_{B,i-1}-u_{B,i})\cP_{out}+\ell(u_{B,i+1}-u_{B,i})\cP_{in}\right]\1_i\\
\disp \hspace{1cm}=\sum_{i=1}^N\left\{-\mu_Bu_{B,i}+\frac{H p}{KW}u_{I,i}+\frac{\ell}{2N^2}\big[N^2(u_{B,i-1}-2u_{B,i}+u_{B,i+1})\big]\right.\\
\disp \hspace{3cm}\left.-\frac{\ell}{N}(\cP_{out}-\cP_{in})\left[\frac{N}{2}(u_{B,i+1}-u_{B,i-1})\right]\right\}\vspace{0.1cm}\\
\disp \hspace{1cm} = -\mu_Bu_{B}+\frac{H p}{KW}u_{I}+(-\nu\nabla_N +\fD\Delta_N) u_B \hspace{0.5cm} = \hspace{0.5cm} F_B(u_S,u_I,u_R,u_B)+A_Nu_B
\end{array}
\end{equation}
is the debit related to bacteria. The second equality of \eqref{StochasticSpatialDebit_B} is obtained using a change of index and periodicity, and the next one follows from the identity $\cP_{in}+\cP_{out}=1$. The coefficients $\nu=\ell\cdot b/N$ and $\fD=\ell/2N^2$ are the advection and diffusion coefficients introduced in \cref{ConstantReservoirOnSitesAndConvergenceConditions}. The operator $A_N=-\nu\nabla_N+\fD\Delta_N$ is a discretization of the operator $A$ 
defined by \eqref{CompactRenormalizedDeterministicPDE}. 
Here, $\nabla_N$ and $\Delta_N$ are respectively the centered, discrete, gradient and Laplace operators, defined on $L^2(J)$. The former is given by \[ \nabla_Nf(x)=\frac{N}{2}\left[f\left(x+\frac{1}{N}\right)-f\left(x-\frac{1}{N}\right)\right]. \] 
Concerning the latter, we first introduce the uncentered discrete gradients \[ \nabla_N^+f(x)=N\left[f\left(x+\frac{1}{N}\right)-f(x)\right]\hspace{0.5cm}\text{and}\hspace{0.5cm}\nabla_N^-f(x)=N\left[f(x)-f\left(x-\frac{1}{N}\right)\right] .\] Then, we define the centered discrete Laplace by 
\begin{align*}
\Delta_Nf(x) & = \nabla_N^+\nabla_N^-f(x)\\
& = \nabla_N^-\nabla_N^+f(x) = N^2\left[f\left(x+\frac{1}{N}\right)-2f(x)+f\left(x-\frac{1}{N}\right)\right].
\end{align*} 
If $f\in\H^N$ in particular, then periodicity yields \[ \nabla_Nf=\frac{N}{2}\sum_{i=1}^N\big(f_{i+1}-f_{i-1}\big)\1_i\hspace{0.5cm}\text{and}\hspace{0.5cm}\Delta_Nf=N^2\sum_{i=1}^N\big(f_{i+1}-2f_i+f_{i-1}\big)\1_i. \] \par 

Hence, a compact formulation of the debit is \[ \psi^N(u)=\tilde{A}_Nu+F(u) \] for all $u\in \bold H^N$, where $F$ is given by \eqref{CompactRenormalizedDeterministicPDE}, and $\tilde{A}_N=\text{diag}(0,0,0,A_N)$ is a diagonal matrix operator on $\bold H^N$. This latter operator is a dicretization of the operator $\tilde{A}$ given by \eqref{CompactRenormalizedDeterministicPDE}. \par 

\medskip
The debit $\psi^N$ can also be extended to $\bold L^2(J)$, by $u\mapsto\psi^N\big(\tilde{P}_Nu\big)$.\\

We are interested in the asymptotic behaviour of our stochastic process $u^N$, as $N,H,K\rightarrow\infty$. In an $L^2(J)-$framework, the generator $\cA^N$ given by \eqref{StochasticSpatialGenerator} formally converges to the generator $\cA$ given by \eqref{DeterministicSpatialGenerator}, under the additional assumptions $K^{-1}N^2\rightarrow 0$ and $H/K>0$ is kept constant. We notice that the condition $K^{-1}N^2\rightarrow 0$ is equivalent to $H^{-1}N^2\rightarrow 0$, as soon as $H/K\rightarrow>0$ remains constant. This formal argument strongly suggests the convergence of the stochastic model to a corresponding deterministic model. We rigorously prove this below, in the supremum norm topology, and replacing the strong condition $K^{-1}N^2\rightarrow\infty$ by the much weaker condition $K^{-1}\log N\rightarrow\infty$.

\section{The law of large numbers}

We state and prove our main result.

\begin{theorem}\label{TheLLNForSpatialCholera}
Consider a sequence $u^N=\big(u_S^N,u_I^N,u_R^N,u_B^N\big)$ of Markov processes starting at $u^N(0)=\big(u_S^N(0),u_I^N(0),u_R^N(0),u_B^N(0)\big)$, with the infinitesimal generators $\cA^N$ given by \eqref{StochasticSpatialGenerator}. Assume that: \vspace{0.2cm}\par 
(i) $N,H,K\longrightarrow\infty$ in such a way that $H/K$ remains constant and non-negative, and $K^{-1}\log N\rightarrow0$ or, equivalently, $H^{-1}\log N\rightarrow0$.\par 
(ii) The assumptions of \cref{Well-posednessOfCompactRenormalizedDeterministicPDE} hold and $v$ is the solution of \eqref{CompactRenormalizedDeterministicPDE}. \par  
(iii) $u^N(0)\in \big(H^{-1}\N\big)^3\times K^{-1}\N$ and $\Vert u^N(0)-v(0)\Vert_{\bold C(J)}\longrightarrow0$ in probability.\vspace{0.2cm}\par 
\noindent Then, for all $T>0$, \[ \sup_{t\leq T}\Vert u^N(t)-v(t)\Vert_{\bold C(J)}\longrightarrow0\hspace{0.3cm}\text{in probability}. \]
\end{theorem}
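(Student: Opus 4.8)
The plan is to combine the mild (variation-of-constants) formulation with a Gronwall argument, following the Kurtz--Blount strategy announced in the introduction. First I would write the semimartingale decomposition of $u^N$: applying Dynkin's formula to the coordinate functionals and using that the debit of $\cA^N$ is $\psi^N(u)=\tilde{A}_Nu+F(u)$, the process satisfies
\[ u^N(t)=u^N(0)+\int_0^t\big(\tilde{A}_Nu^N(s)+F(u^N(s))\big)\,ds+M^N(t), \]
where $M^N$ is an $\bold H^N$-valued c\`adl\`ag martingale whose jumps are those of $u^N$. Writing $\tilde{T}_N(t)=e^{\tilde{A}_Nt}$ for the uniformly bounded semigroup generated by the discretized operator $\tilde{A}_N$, the variation-of-constants formula yields the mild representation
\[ u^N(t)=\tilde{T}_N(t)u^N(0)+\int_0^t\tilde{T}_N(t-s)F(u^N(s))\,ds+\int_0^t\tilde{T}_N(t-s)\,dM^N(s), \]
which I would compare term by term with \eqref{MildRepresentationOfDeterministicModel}.

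Subtracting the two mild formulas, I would decompose $u^N(t)-v(t)$ into the initial-data term $\tilde{T}(t)\big(u^N(0)-v(0)\big)$, the two deterministic consistency terms $\big[\tilde{T}_N(t)-\tilde{T}(t)\big]u^N(0)$ and $\int_0^t\big[\tilde{T}_N(t-s)-\tilde{T}(t-s)\big]F(v(s))\,ds$, the Lipschitz term $\int_0^t\tilde{T}_N(t-s)\big[F(u^N(s))-F(v(s))\big]\,ds$, and the stochastic convolution $Z^N(t):=\int_0^t\tilde{T}_N(t-s)\,dM^N(s)$. The initial-data term tends to $0$ in $\bold C(J)$ by hypothesis (iii) and the uniform bound $\Vert T(t)\Vert\le c_1$. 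The consistency terms are purely analytic and vanish by standard finite-difference estimates comparing $\nabla_N,\Delta_N$ with $\nabla,\Delta$, using the regularity $v_0\in C^3(J)$ in the bacterial coordinate together with \eqref{BoundOfDeterministicModel}. To keep the local Lipschitz constant of $F$ (from \eqref{ReactionDebitBoundProperties}) uniform, I would introduce the stopping time $\tau_N=\inf\{t:\Vert u^N(t)\Vert_{\bold C(J)}>c_T+1\}$ and run the whole estimate up to $T\wedge\tau_N$, checking at the end that $\P(\tau_N\le T)\to0$.

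The heart of the matter, and the step I expect to be the main obstacle, is the supremum-norm control of $Z^N$. The quadratic variation of $M^N$ is read off the jump sizes and rates in \eqref{StochasticSpatialGenerator_S}--\eqref{StochasticSpatialGenerator_B}: the human compartments give increments of order $H^{-1}$ at rate of order $H$, while the bacterial transport gives increments of order $K^{-1}$ at rate of order $K\ell\sim KN^2$. A crude $\bold L^2(J)$ bound would only deliver the strong condition $K^{-1}N^2\to0$; reaching the announced condition $K^{-1}\log N\to0$ forces one to exploit the smoothing of the analytic semigroup $\tilde{T}_N$ directly in the supremum norm. Concretely, I would follow the exponential-moment technique of \cite{Blount1992}: estimate the exponential moments of the coordinates $\langle Z^N(t),e_i\rangle$, use the spectral action of $\tilde{T}_N$ on the orthonormal basis $\{e_i\}$ to absorb the regularization, and then pass from pointwise to supremum control by a maximal inequality over the $N$ sites, the logarithm of the site count producing exactly the factor $\log N$. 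This gives $\sup_{t\le T}\Vert Z^N(t)\Vert_{\bold C(J)}\to0$ in probability under hypothesis (i).

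Finally, collecting everything, the Lipschitz term is the only one left to absorb. Setting $g_N(t)=\sup_{s\le t}\Vert u^N(s\wedge\tau_N)-v(s\wedge\tau_N)\Vert_{\bold C(J)}$, the accumulated estimates give $g_N(t)\le\eps_N+c_1M\int_0^tg_N(s)\,ds$ with $\eps_N\to0$ in probability (gathering the initial-data, consistency and stochastic-convolution contributions), so Gronwall's lemma yields $g_N(T)\le\eps_N\,\text{e}^{c_1MT}\to0$ in probability; removing the stopping time then finishes the proof.
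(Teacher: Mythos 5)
Your proposal follows essentially the same route as the paper: the mild formulation via the discrete semigroup $\tilde{T}_N$, truncation to obtain a uniform Lipschitz constant for $F$, a Gronwall argument, and --- for the crucial supremum-norm control of the stochastic convolution --- Blount's exponential-moment estimate combined with a union bound over the $N$ sites and a subdivision of $[0,T]$ with Doob's inequality, which is exactly how the paper reaches the condition $K^{-1}\log N\rightarrow0$ rather than $K^{-1}N^2\rightarrow0$. The only cosmetic differences are that the paper factors the comparison through the spatially discretized deterministic system $v^N$ (\cref{DiscreteApproximationOfTheLimit}), so the consistency terms $\tilde{T}_N-\tilde{T}$ are disposed of once in a deterministic lemma instead of being carried inside the main estimate, and it stops at the first time $\Vert u^N(t)-v^N(t)\Vert_{\bold C(J)}>\epsilon_0$ rather than at the first exit of $u^N$ from a ball, which makes your final verification that $\P(\tau_N\leq T)\rightarrow0$ unnecessary.
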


\noindent \textbf{\textit{Proof.}} 
Fix $T>0$. We want to prove that for all $\epsilon>0$, \[ \P\left\{\sup_{t\leq T}\Vert u^N(t)-v(t)\Vert_E>\epsilon\right\}\rightarrow0. \] 

Instead of working directly with $v$, we consider a discrete version $v^N=\big(v_S^N,v_I^N,v_R^N,v_B^N\big)$ of it, defined by the ODE:
\begin{equation}\label{DiscreteVersionOfPDE}
\left\{
\begin{array}{l}
\disp \frac{dv^N(t)}{dt}=\tilde{A}_Nv^N(t)+F\big(v^N(t)\big)\vspace{0.1cm}\\
\disp v^N(0)=\tilde{P}_Nv_0.
\end{array}
\right.
\end{equation}
Let $T_N(t)=e^{A_Nt}$ be the semigroup of $A_N$ on $\big(\H^N,\Vert\cdot\Vert_\infty\big)$, and let $\tilde{T}_N(t)=e^{\tilde{A}_Nt}$ be the semigroup of $\tilde{A}_N$ on $\big(\bold H^N,\Vert\cdot\Vert_{\bold C(J)}\big)$. These are clearly 
bounded semigroups. Let $c_2>0$ be a real such that $\Vert T_N(t)\Vert\leq c_2$ ---the operator norm---. We  claim the following.

\begin{lemma}\label{DiscreteApproximationOfTheLimit}
The initial condition problem \eqref{DiscreteVersionOfPDE} has a unique global mild solution $v^N\in C\big(\R_+;\bold H^N\big)$, satisfying $v^N(t)\geq0$ and
\begin{equation}\label{MildRepresentationOfTheDiscreteVersion}
\disp v^N(t)=\tilde{T}_N(t)\tilde{P}_Nv_0+\int_0^t\tilde{T}_N(t-s)F\big(v^N(s)\big)ds 
\end{equation}
for all $t\geq0$. Moreover, for every $T>0$, 
\begin{equation}\label{BoundOfTheDiscreteVersion}
\sup_{t\leq T}\Vert v^N(t)\Vert_{\bold C(J)}\leq c_T=c_0c_2\text{e}^{c_2MT},
\end{equation}
and
\begin{equation}\label{ConvergenceOfTheDiscreteVersion}
\disp \lim_{N\rightarrow\infty}\sup_{t\in[0,T]}\Vert v^N(t)-v(t)\Vert_{\bold C(J)}=0. 
\end{equation}
\end{lemma}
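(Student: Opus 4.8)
The plan is to handle the three assertions in turn, reusing \cref{Well-posednessOfCompactRenormalizedDeterministicPDE} wherever possible; the genuinely new content is the convergence \eqref{ConvergenceOfTheDiscreteVersion}. Existence, uniqueness, positivity, the mild representation \eqref{MildRepresentationOfTheDiscreteVersion}, and the bound \eqref{BoundOfTheDiscreteVersion} all follow the pattern of \cref{Well-posednessOfCompactRenormalizedDeterministicPDE}. Since \eqref{DiscreteVersionOfPDE} is posed on the finite-dimensional space $\bold H^N$, the map $\tilde{T}_N(t)=e^{\tilde{A}_N t}$ is a matrix exponential; the restriction of $F$ to $\R_+^4$ is locally Lipschitz with the linear bound \eqref{ReactionDebitBoundProperties}(iii), so a Banach fixed point argument on the Duhamel map $w\mapsto\tilde{T}_N(t)\tilde{P}_Nv_0+\int_0^t\tilde{T}_N(t-s)F(w(s))\,ds$ gives a unique local mild solution, and the Gronwall--Bellman estimate (using $\Vert\tilde{T}_N(t)\Vert\leq c_2$, the contraction $\Vert\tilde{P}_Nv_0\Vert_{\bold C(J)}\leq\Vert v_0\Vert_{\bold C(J)}<c_0$, and $\Vert F(y)\Vert\leq M\Vert y\Vert$) yields both global existence and \eqref{BoundOfTheDiscreteVersion} with $c_T=c_0c_2\text{e}^{c_2MT}$. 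Positivity needs one extra observation: with the scalings $\fD=\ell/2N^2$ and $\nu=b\ell/N$, the off-diagonal entries of $A_N$ equal $\ell\cP_{in}$ and $\ell\cP_{out}$, both nonnegative, so $A_N$ is a Metzler matrix, $T_N(t)$ preserves positivity, and since its rows sum to zero $T_N(t)\mathbf 1=\mathbf 1$ is a supremum-norm contraction (so one may take $c_2=1$); together with the boundary sign conditions \eqref{ReactionDebitBoundProperties}(i)-(ii) this gives $v^N(t)\geq0$.

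For \eqref{ConvergenceOfTheDiscreteVersion}, I would subtract the mild representations \eqref{MildRepresentationOfTheDiscreteVersion} and \eqref{MildRepresentationOfDeterministicModel} and split the error $v^N(t)-v(t)$ into the initial-data term $\tilde{T}_N(t)\tilde{P}_Nv_0-\tilde{T}(t)v_0$, the Lipschitz term $\int_0^t\tilde{T}_N(t-s)\big[F(v^N(s))-F(v(s))\big]\,ds$, and the consistency term $\int_0^t\big[\tilde{T}_N(t-s)-\tilde{T}(t-s)\big]F(v(s))\,ds$. Writing $g^N(t)=\Vert v^N(t)-v(t)\Vert_{\bold C(J)}$, the Lipschitz term is dominated by $c_2L\int_0^t g^N(s)\,ds$, where $L$ is the Lipschitz constant of $F$ on the ball of radius $c_T$ containing both trajectories by \eqref{BoundOfTheDiscreteVersion} and \eqref{BoundOfDeterministicModel}. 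Calling $\eta_N(t)$ the sum of the remaining two terms, one obtains $g^N(t)\leq\eta_N(t)+c_2L\int_0^t g^N(s)\,ds$, so Gronwall reduces everything to showing $\sup_{t\leq T}\eta_N(t)\to0$.

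The crux, which I expect to be the main obstacle, is the uniform semigroup convergence $\sup_{t\leq T}\Vert[\tilde{T}_N(t)-\tilde{T}(t)]w\Vert_{\bold C(J)}\to0$ in the supremum norm that controls both surviving pieces. As $\tilde{A}_N$ and $\tilde{A}$ act nontrivially only on the bacteria component, this reduces to $\sup_{t\leq T}\Vert T_N(t)P_Nf-T(t)f\Vert_\infty\to0$, which I would obtain by a Trotter--Kato argument as in \cite{Blount1992}: stability is the contraction bound $\Vert T_N(t)\Vert\leq c_2$, and consistency is $\Vert A_NP_Nf-Af\Vert_\infty\to0$ for $f$ in the dense subspace $C^3(J)$, which follows from Taylor expansion showing that $\nabla_N$ and $\Delta_N$ approximate $\nabla$ and $\Delta$ uniformly on smooth functions. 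The initial-data term then converges directly, since $v_0\in[C(J)]^3\times C^3(J)$ and $\Vert P_Nf-f\Vert_\infty\to0$ for $f\in C_p(J)$. For the consistency term, $s\mapsto F(v(s))$ is continuous into $\bold C(J)$, so $\{F(v(s)):s\in[0,T]\}$ is compact; the functionals $f\mapsto\sup_{t\leq T}\Vert[\tilde{T}_N(t)-\tilde{T}(t)]f\Vert_{\bold C(J)}$ are equi-Lipschitz with constant $c_1+c_2$ and converge to $0$ pointwise, hence uniformly on that compact set, and dominated convergence makes the integral tend to $0$ uniformly in $t\leq T$. The delicate point is exactly this double uniformity --- in $t$ and over the compact family $F(v(s))$ --- together with the fact that $F(v(s))$ is only continuous in $x$, so the bare Taylor consistency (valid on $C^3$) must be propagated to merely continuous data through stability and a density argument rather than used directly.
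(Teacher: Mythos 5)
Your proposal follows essentially the same route as the paper: a Picard/Banach fixed point argument plus Gronwall for existence, the bound \eqref{BoundOfTheDiscreteVersion} and globality, then subtraction of the two mild representations, absorption of the Lipschitz term by Gronwall, and a Trotter--Kato stability-plus-consistency argument (the paper invokes Kato, Ch.~9, \S 3) combined with dominated convergence for the initial-data and consistency terms. Your extra care on two points the paper glosses over --- the Metzler/positivity structure of $A_N$ with off-diagonal entries $\ell\cP_{in}$, $\ell\cP_{out}$, and the fact that $F(v(s))$ is only continuous in $x$ so the $C^3$ consistency must be propagated to it by stability, density and compactness --- is a refinement of the same method rather than a different one.
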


The proof is posponed to \Cref{ProofOfDiscreteApproximationOfTheLimit}.
In view of \cref{DiscreteApproximationOfTheLimit}, we may work with $v^N$ rather that $v$. In the rest of the article, $c_T$ is a generic constant depending on $c_0$, $c_2$, $T$ and $M$. \par 

\medskip
The rest of the proof is divided in two steps. We first truncate the stochastic model and replace it by its truncation. Then, we conclude with a Gronwall argument. \\

\noindent \textbf{\textit{\underline{Step 1: TRUNCATION.}}}
Set \[ \tau=\tau(N,\epsilon_0)=\inf\big\{t\geq0:\Vert u^N(t)-v^N(t)\Vert_{\bold C(J)}>\epsilon_0\big\}, \] 

\noindent for fixed $\epsilon_0\in (0,1)$, and define $\bar{u}^N=\big(\bar{u}_S^N,\bar{u}_I^N,\bar{u}_R^N,\bar{u}_B^N\big)$ by 
\begin{equation}\label{TruncatedStochasticProcess}
\left\{
\begin{array}{ll}
\disp \bar{u}^N(t)=u^N(t) & \disp \text{for } 0\leq t\leq \tau\leq \infty,\vspace{0.1cm}\\
\disp \bar{u}^N(t)=u^N(\tau)+\int_{\tau}^t\left(\tilde{A}_N\bar{u}^N(s)+F\big(\bar{u}^N(s)\big)\right)ds & \disp \text{for } \tau<t<\infty.
\end{array}
\right.
\end{equation}

\noindent By definition, $\tau$ is a stopping time such that
\begin{align*}
\P\left\{\sup_{t\leq T}\Vert u^N(t)-v^N(t)\Vert_{\bold C(J)}>\epsilon_0\right\} & \leq \P\left\{\sup_{t\leq T}\Vert u^N(t\wedge\tau)-v^N(t\wedge\tau)\Vert_{\bold C(J)}>\epsilon_0\right\}\\
& \leq \P\left\{\sup_{t\leq T}\Vert \bar{u}^N(t)-v^N(t)\Vert_{\bold C(J)}>\epsilon_0\right\}.
\end{align*}
Therefore, we may work with $\bar{u}^N$ instead of $u^N$.

\medskip
\noindent \textbf{\textit{Boundedness and Lipschitz debits.}} 
The truncated process $\bar{u}^N$ has the same dynamic as $u^N$ until time $\tau$, and follows the flow of an ODE after time $\tau$ if $\tau<\infty$. We then can derive that, 
\begin{equation}\label{BoundednessOfTheTruncatedProcess}
0\leq \bar{u}^N(t,x)\leq c_T, \quad\forall t\leq T,\forall x\in J.
\end{equation} 
Indeed, $u^N(0)\geq0$ and from \cref{Well-posednessOfCompactRenormalizedDeterministicPDE}, we know that $0\leq v(t,x)\leq c_T=c_0e^{MT}$ for all $(t,x)\in [0,T]\times J$. 
Since we are assuming $\Vert u^N(0)-v(0)\Vert_{\bold C(J)}\rightarrow0$ in probability, we may, by conditioning on $\Vert u^N(0)\Vert_{\bold C(J)}<c_T+1$ if necessary, assume without loss of generality that \[ 0\leq u^N(0,x)<c_T+1 \hspace{0.2cm} \forall x\in J,\hspace{0.1cm}\forall N. \]
From 
\cref{DiscreteApproximationOfTheLimit}, we have 
$\sup_{t\leq T}\Vert v^N(t)\Vert_{\bold C(J)}\leq c_T+\frac{1}{2}$.
Therefore, by definition of $\tau$, 
\[ \Vert u^N(t\wedge\tau,x)\Vert_{\bold C(J)}\leq c_T+1,\hspace{0.2cm}\text{for}\hspace{0.2cm}0\leq t\leq \tau \] 
for $\epsilon_0<\frac{1}{2}$.
Now, if $\tau<T<\infty$, for all $t\in(\tau,T]$, we have from the variation of constant 
\[ \bar u^N(t)=\tilde T_N(t)u^N(\tau)+\int_{\tau}^t\tilde T_N(t-s)F\big(\bar u^N(s)\big)ds. \]
Then,
\eqref{ReactionDebitBoundProperties} (iii) yields
\[ \Vert \bar{u}^N(t)\Vert_{\bold C(J)} \leq c_2\Vert \bar{u}^N(\tau)\Vert_{\bold C(J)}+c_2M\int_\tau^t\Vert \bar{u}^N(s)\Vert_{\bold C(J)}ds \leq c_2(c_T+1)\text{e}^{c_2MT}=:c_T, \]
using Gronwall lemma. $\square$

As a result, we consider the restriction of the function $F$ to the compact set $[0,c_T]^4$ of $\R^4$, and consider, in the following, that $F$ is bounded by a constant $M_F(c_T)>0$. Furthermore, $F$ is globally Lipschitz on that compact set and we let $L_F(c_T)$ be a consequent Lipschitz constant.

\medskip
\noindent \textbf{\textit{Accompanying martingales.}} Let us introduce some useful notions and notation.
First, for every node $i=1,\cdots,N$ and all $t\geq0$, define the jump \[ \delta u_i^N(t):=u_i^N(t)-u_i^N(t^-) \] of $u_i^N$ at time $t$. Then denote by $\big|\delta u_i^N(t)\big|$ the amplitude of that jump, where $|(y_1,\cdots,y_4)|=|y_1|+\cdots+|y_4|$. 
Next, we define the \textit{square amplitude} $|\psi_{S}^N|_i^2$ (resp. $|\psi_{B}^N|_i^2$) of $\psi_{S,i}^N$ (resp. $\psi_{B,i}^N$), as the debit function of the process 
\[ \left(H\sum_{s\leq t}|\delta S_i^N(s)|^2\right)_{t\geq0}\hspace{1cm}\left[\text{resp.}\hspace{0.2cm}\left(K\sum_{s\leq t}|\delta B_i^N(s)|^2\right)_{t\geq0}\right]. \] 
We similarly define the \textit{square amplitudes} related to the compartments of infected and recovered, accordingly.
We have: 
\[
\begin{array}{ll}
\bullet & \disp |\psi_{S}^N|_i^2(u) = |F_S|_i^2(u) = 2\mu u_{S,i} + \mu u_{I,i} + (\mu+\rho)u_{R,i} + \beta\frac{u_{B,i}}{1+u_{B,i}}u_{S,i} \vspace{0cm}\\
\bullet & \disp |\psi_{I}^N|_i^2(u) = |F_I|_i^2(u) = \beta\frac{u_{B,i}}{1+u_{B,i}}u_{S,i} + (\mu+\alpha+\gamma)u_{I,i} \vspace{0.1cm}\\
\bullet & \disp |\psi_{R}^N|_i^2(u) = |F_R|_i^2(u) = \gamma u_{I,i} + (\mu+\rho)u_{R,i} \vspace{0.3cm}\\
\bullet & \disp |\psi_{B}^N|_i^2(u) = |A_N|_i^2(u) + |F_B|_i^2(u), 
\end{array}
\] 
where 
\[ |F_B|_i^2(u)=\mu u_{B,i} + r\frac{p}{W}u_{I,i} \quad  and \quad  |A_N|_i^2(u)=\ell(\cP_{in}u_{B,i+1}+u_{B,i}+\cP_{out}u_{B,i-1}). \] 

\indent Also, we set $\disp |\psi^N|_i^2(u)=\left(|\psi_S^N|_i^2+|\psi_I^N|_i^2+|\psi_R^N|_i^2+|\psi_B^N|_i^2\right)(u)$.\par 

Therewith, since spatial correlations induce simultaneous jumps on the nodes $i$ and $i\pm1$, the process \[ \left(K\sum_{s\leq t}\big[\delta B_i^N(s)\big]\big[\delta B_{i\pm1}^N(s)\big]\right)_{t\geq0}, \] with crossed products, is also of interest. Its debit $|\psi_B^N|_i|\psi_B^N|_{i\pm1}$ is given by 
\[ 
\begin{array}{ll}
\bullet & \disp |\psi_B^N|_i|\psi_B^N|_{i+1}(u)=-\ell(\cP_{out}u_{B,i}+\cP_{in}u_{B,i+1}) \vspace{0.2cm}\\
\bullet & \disp |\psi_B^N|_i|\psi_B^N|_{i-1}(u)=-\ell(\cP_{in}u_{B,i}+\cP_{out}u_{B,i-1}). 
\end{array}
\]

In addition, we introduce at last, the \textit{square amplitude function} $|\psi^N|^2$ associated with the debit $\psi^N$. It is the $\bold H^N-$valued function defined by \[ |\psi^N|^2(u)=\sum_{i=1}^N|\psi^N|_i^2(u)\1_i. \] 
The \textit{square amplitude function} can be easily derived for each of the specific debit functions introduced above. 
Concerning $\psi_S^N=F_S$ for instance, it is given by \[ |\psi_S^N|^2(u)(x)=|F_S|^2(u)(x)=\sum_{i=1}^N|F_S|_i^2(u)\1_i(x). \] 
The others are derived accordingly.\\

Now, we move on to the so-called accompanying martingales. As the stopping time $\tau$ satisfies 
\begin{align*}
\sup_{t\leq T}\Vert u^N(t\wedge\tau)\Vert_{\bold C(J)} & \leq \sup_{t\leq T}\Vert \bar{u}^N(t)\Vert_{\bold C(J)}<\infty \leq c_T,
\end{align*}  
various types of martingales can be pointed out, that are associated to the stopped Markov process $u^N(t\wedge\tau)$. 
Define the process $Z^N=\big(Z_S^N,Z_I^N,Z_R^N,Z_B^N\big)$ by
\begin{equation}\label{ZeroTypeMartingales}
\disp Z^N(t)=u^N(t)-u^N(0)-\int_0^t\psi^N\big(u^N(s)\big)ds,
\end{equation}  
where \[ Z_S^N(t)=u_S^N(t)-u_S^N(0)-\int_0^t\psi_S^N\big(u^N(s)\big)ds, \] and the other components are defined accordingly. It is well known that $Z^N(t\wedge\tau)$ defines an $\bold H^N-$valued mean zero $\cF_t^N$-martingale, so that $Z_S^N(t\wedge\tau)$, $Z_I^N(t\wedge\tau)$, $Z_R^N(t\wedge\tau)$ and $Z_B^N(t\wedge\tau)$ are $\H^N-$valued mean zero $\cF_t^N$-martingales. We denote these as \textit{Zero-type associated martingales.} In other words, debit functions are martingale parts of the Markov processes they are associated with. This statement is the basis of what follows.

\begin{lemma}\label{FirstTypeOfMartingales}\texttt{(Martingales of type 1)}\vspace{0.1cm}\par  
For every $i=1,\cdots,N$, the following are mean zero $\cF_t^N$-martingales:
\[
\begin{array}{ll}
\textbf{(i)} & \disp \sum_{s\leq t\wedge\tau}\big[\delta u_{S,i}^N(s)\big]^2-\frac{1}{H}\int_0^{t\wedge\tau}|\psi_S^N|_i^2\big(u^N(s)\big)ds,\\
\textbf{(ii)} & \disp \sum_{s\leq t\wedge\tau}\big[\delta u_{I,i}^N(s)\big]^2-\frac{1}{H}\int_0^{t\wedge\tau}|\psi_I^N|_i^2\big(u^N(s)\big)ds,\\
\textbf{(iii)} & \disp \sum_{s\leq t\wedge\tau}\big[\delta u_{R,i}^N(s)\big]^2-\frac{1}{H}\int_0^{t\wedge\tau}|\psi_R^N|_i^2\big(u^N(s)\big)ds,\\
\textbf{(iv)} & \disp \sum_{s\leq t\wedge\tau}\big[\delta u_{B,i}^N(s)\big]^2-\frac{1}{K}\int_0^{t\wedge\tau}|\psi_B^N|_i^2\big(u^N(s)\big)ds,\\
\textbf{(v)} & \disp \sum_{s\leq t\wedge\tau}\big[\delta u_{B,i}^N(s)\big]\big[\delta u_{B,i\pm1}^N(s)\big]-\frac{1}{K}\int_0^{t\wedge\tau}|\psi_B^N|_i|\psi_B^N|_{i\pm1}\big(u^N(s)\big)ds.
\end{array}
\]
\end{lemma}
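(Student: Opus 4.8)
The plan is to read each of \textbf{(i)}--\textbf{(v)} as the compensated quadratic (co)variation of the coordinate martingales associated with $u^N$, and to extract the compensators directly from the jump rates encoded in $\cA^N$. After stopping at $\tau$ the process takes values in the compact set $[0,c_T]^4$ by \eqref{BoundednessOfTheTruncatedProcess}, so every rate appearing in \eqref{StochasticSpatialGenerator_S}--\eqref{StochasticSpatialGenerator_B} is bounded on $[0,T]$; consequently all the martingales below will be genuine (square-integrable) martingales, not merely local ones. The starting point is the zero-type martingale already isolated above: for each node $i$, \[ M_{S,i}(t):=u_{S,i}^N(t\wedge\tau)-u_{S,i}^N(0)-\int_0^{t\wedge\tau}\psi_{S,i}^N\big(u^N(s)\big)\,ds \] is a mean-zero $\cF_t^N$-martingale, and likewise for $I$, $R$, $B$. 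Since $u^N$ is a pure-jump process, its optional quadratic variation is the sum of squared jumps, $[M_{S,i}]_t=\sum_{s\le t\wedge\tau}\big[\delta u_{S,i}^N(s)\big]^2$, and the standard theory of jump Markov processes (see \cite{Blount1992}) gives that $[M_{S,i}]_t-\langle M_{S,i}\rangle_t$ is again a mean-zero martingale, the predictable bracket being \[ \langle M_{S,i}\rangle_t=\int_0^{t\wedge\tau}\Big(\sum_{\Delta}\big(\Delta u_{S,i}\big)^2\,q_\Delta\big(u^N(s)\big)\Big)\,ds, \] where $\Delta$ ranges over the admissible jumps of $u^N$ and $q_\Delta$ denotes their rates. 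Thus \textbf{(i)}--\textbf{(iv)} will reduce to checking that this inner sum equals $\tfrac1H|\psi_S^N|_i^2$, $\tfrac1H|\psi_I^N|_i^2$, $\tfrac1H|\psi_R^N|_i^2$ and $\tfrac1K|\psi_B^N|_i^2$ respectively.

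These identities follow by a direct inspection of the generator. The jumps of $u_{S,i}^N$ are exactly the $\pm\tfrac1H\1_i$ moves occurring in $\cA_S^N$, $\cA_{SI}^N$ and $\cA_{RS}^N$; each has amplitude $\big(\Delta u_{S,i}\big)^2=\tfrac1{H^2}$, so summing the corresponding rates gives \[ \sum_\Delta\big(\Delta u_{S,i}\big)^2 q_\Delta=\frac1{H^2}\cdot H\Big[2\mu u_{S,i}+\mu u_{I,i}+(\mu+\rho)u_{R,i}+\beta\tfrac{u_{B,i}}{1+u_{B,i}}u_{S,i}\Big]=\frac1H|\psi_S^N|_i^2(u), \] which is \textbf{(i)}. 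The same reading of $\cA_{SI}^N,\cA_I^N,\cA_{IR}^N$ yields \textbf{(ii)}, and of $\cA_{IR}^N,\cA_R^N,\cA_{RS}^N$ yields \textbf{(iii)}. For \textbf{(iv)} one must collect, besides the local death and production moves $\pm\tfrac1K\1_i$ of $\cA_B^N$, the four transport moves that touch node $i$: the two outgoing ones (amplitudes $\tfrac{\1_{i\pm1}-\1_i}{K}$, rates $K\ell\cP_{out}u_{B,i}$ and $K\ell\cP_{in}u_{B,i}$) and the two incoming ones carried by the events based at $i\mp1$ (rates $K\ell\cP_{out}u_{B,i-1}$ and $K\ell\cP_{in}u_{B,i+1}$), this last identification being the same change of index and use of periodicity already performed for $\psi_B^N$ in \eqref{StochasticSpatialDebit_B}. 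Each has amplitude $\tfrac1{K^2}$, and after using $\cP_{in}+\cP_{out}=1$ the total reproduces exactly $\tfrac1K\big(|A_N|_i^2+|F_B|_i^2\big)=\tfrac1K|\psi_B^N|_i^2$.

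The crossed terms \textbf{(v)} will follow by the same mechanism applied to the predictable covariation: $[M_{B,i},M_{B,i\pm1}]_t=\sum_{s\le t\wedge\tau}\big[\delta u_{B,i}^N\big]\big[\delta u_{B,i\pm1}^N\big]$ differs from $\langle M_{B,i},M_{B,i\pm1}\rangle_t$ by a martingale, and only jumps moving both coordinates simultaneously contribute. The sole such events are the transport moves between $i$ and its neighbour, which decrease one coordinate by $\tfrac1K$ while increasing the other by $\tfrac1K$; hence every contributing product equals $-\tfrac1{K^2}$, and summing the two relevant rates gives $-\tfrac{\ell}{K}\big(\cP_{out}u_{B,i}+\cP_{in}u_{B,i+1}\big)$ for the neighbour $i+1$ and $-\tfrac{\ell}{K}\big(\cP_{in}u_{B,i}+\cP_{out}u_{B,i-1}\big)$ for $i-1$, i.e.\ exactly $\tfrac1K|\psi_B^N|_i|\psi_B^N|_{i\pm1}$. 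I expect the only genuinely delicate point to be this transport bookkeeping: one must count not only the events based at $i$ but also those based at the neighbouring nodes that nonetheless move $u_{B,i}$, track the anticorrelated signs that render the crossed debits negative, and invoke periodicity at $i=1,N$. Everything else is the routine enumeration of finitely many bounded rates, and the martingale (hence mean-zero) property is secured throughout by the uniform boundedness \eqref{BoundednessOfTheTruncatedProcess} of the stopped process.
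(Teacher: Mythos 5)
Your proposal is correct and follows essentially the same route as the paper, which itself only points to \cite{Blount1992} and \cite{DebusscheNankep2017} for this computation: the martingales in (i)--(v) are exactly the compensated quadratic (co)variations of the zero-type martingales, with compensators read off by summing $(\text{jump amplitude})^2\times(\text{rate})$ over the events in \eqref{StochasticSpatialGenerator_S}--\eqref{StochasticSpatialGenerator_B}, and your enumeration (including the incoming transport events based at $i\mp1$ and the anticorrelated signs in the crossed terms) reproduces the stated square amplitudes $|\psi_\cdot^N|_i^2$ and $|\psi_B^N|_i|\psi_B^N|_{i\pm1}$ exactly. The only discrepancies are typographical on the paper's side (its $|F_B|_i^2$ displays $\mu u_{B,i}+r\frac{p}{W}u_{I,i}$ where your $\mu_B u_{B,i}+\frac{Hp}{KW}u_{I,i}$ is what the generator actually yields), so nothing further is needed.
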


\begin{lemma}\label{SecondTypeOfMartingales}\texttt{(Martingales of type 2)}\vspace{0.1cm}\par  
Let $f\in \H^N$. For all $i=1,\cdots,N$, the following are mean zero $\cF_t^N$-martingales:
\[
\begin{array}{ll}
\textbf{(i)} & \disp \sum_{s\leq t\wedge\tau}\left[\delta\big\langle Z_S^N(s),f\big\rangle_2\right]^2-\frac{1}{NH}\int_0^{t\wedge\tau}\left\langle |\psi_S^N|^2\big(u^N(s)\big),f^2\right\rangle_2ds \\
\textbf{(ii)} & \disp \sum_{s\leq t\wedge\tau}\left[\delta\big\langle Z_I^N(s),f\big\rangle_2\right]^2-\frac{1}{NH}\int_0^{t\wedge\tau}\left\langle |\psi_I^N|^2\big(u^N(s)\big),f^2\right\rangle_2ds \\
\textbf{(iii)} & \disp \sum_{s\leq t\wedge\tau}\left[\delta\big\langle Z_R^N(s),f\big\rangle_2\right]^2-\frac{1}{NH}\int_0^{t\wedge\tau}\left\langle |\psi_R^N|^2\big(u^N(s)\big),f^2\right\rangle_2ds\vspace{0.2cm}\\
\textbf{(iv)} & \disp \sum_{s\leq t\wedge\tau}\left[\delta\big\langle Z_B^N(s),f\big\rangle_2\right]^2-\frac{1}{NK}\int_0^{t\wedge\tau}\left[\left\langle u_B^N(s),\cD\left(\big(\nabla_N^+f\big)^2\cP_{out}+\big(\nabla_N^-f\big)^2\cP_{in}\right)\right\rangle_2\right. \\
& \disp \hspace{7.5cm} +\left.\left\langle |F_B|^2\big(u^N(s)\big),f^2\right\rangle_2\right]ds .
\end{array} 
\]
\end{lemma}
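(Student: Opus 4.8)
All five statements are instances of the same principle: each bracketed process is the compensated square of a Dynkin martingale, and its compensator is read off from the ``carré du champ'' of the generator $\cA^N$. The starting observation is that for fixed $f\in\H^N$ the scalar process $\langle Z_S^N(t),f\rangle_2$ is exactly the Dynkin martingale attached to the linear functional $g(u)=\langle u_S,f\rangle_2$. Indeed, applying $\cA^N$ to such a linear functional returns the $f$-pairing of the drift, i.e. $\cA^N g(u)=\langle\psi_S^N(u),f\rangle_2$ by the very definition of the debit, so that $g(u^N(t))-g(u^N(0))-\int_0^t\cA^N g(u^N(s))\,ds=\langle Z_S^N(t),f\rangle_2$, and likewise for $I$, $R$, $B$. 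I would then invoke the classical fact that for a jump process with generator $\cA^N$ the process $\sum_{s\le t}[\delta g(u^N(s))]^2-\int_0^t\Gamma(g,g)(u^N(s))\,ds$ is a local martingale, where $\Gamma(g,g)=\cA^N g^2-2g\,\cA^N g$. Since $\delta g(u^N(s))=\delta\langle Z_\bullet^N(s),f\rangle_2$, the whole task reduces to computing $\Gamma(g,g)$ explicitly and matching it with the stated integrands.

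\textbf{The human compartments (i)--(iii).} For a pure jump generator one has the channel-wise formula $\Gamma(g,g)(u)=\sum_{c}r_c(u)\,[\delta_c g]^2$, where the sum runs over all jump channels, $r_c$ is the rate and $\delta_c g=\langle\delta_c u,f\rangle_2$. Here I use $\langle\1_i,f\rangle_2=f_i/N$, so every channel acting on the $S$-coordinate at node $i$ (the births, the susceptible death, the $S\to I$ transition and the $R\to S$ transition) moves $u_S$ by $\pm\frac1H\1_i$ and hence contributes $[\delta_c g]^2=f_i^2/(N^2H^2)$. Summing the corresponding rates at node $i$ reproduces precisely $H\,|\psi_S^N|_i^2(u)$, the factor $H$ cancelling one power of $H$ in $(\delta_c g)^2$. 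Using $\langle\1_i,\1_i\rangle_2=1/N$ and $f^2=\sum_i f_i^2\1_i$, this yields $\Gamma(g,g)(u)=\frac1{N^2H}\sum_i|\psi_S^N|_i^2(u)f_i^2=\frac1{NH}\langle|\psi_S^N|^2(u),f^2\rangle_2$, which is the asserted compensator; the cases $I$ and $R$ are identical mutatis mutandis. Stopping at $\tau$ makes this rigorous: by the boundedness $0\le\bar u^N(t,x)\le c_T$ established in Step~1, all rates and all $\delta_c g$ are bounded up to $\tau$, so the stopped compensators are bounded and the local martingales are genuine mean-zero martingales.

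\textbf{The bacteria compartment (iv).} This is the delicate case, because the transport channels move mass between neighbouring sites simultaneously. The death and production channels again act at a single node, changing $u_B$ by $\pm\frac1K\1_i$, and contribute $[\delta_c g]^2=f_i^2/(N^2K^2)$; collecting them reproduces the diagonal term $\frac1{NK}\langle|F_B|^2(u),f^2\rangle_2$ with $|F_B|_i^2(u)=\mu_B u_{B,i}+\frac{Hp}{KW}u_{I,i}$. The two transport channels instead change $u_B$ by $\frac1K(\1_{i\pm1}-\1_i)$, so $\delta_c g=\frac1{KN}(f_{i\pm1}-f_i)$ and $[\delta_c g]^2$ produces the squared discrete gradients $(\nabla_N^+f)_i^2$ and $(\nabla_N^-f)_i^2$ through the identities $(\nabla_N^\pm f)_i=\pm N(f_{i\pm1}-f_i)$. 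Carrying the rates $K\ell\cP_{out}u_{B,i}$ and $K\ell\cP_{in}u_{B,i}$ through and substituting $\ell=2\fD N^2$ rewrites these contributions as $\frac1{NK}\langle u_B,\cD((\nabla_N^+f)^2\cP_{out}+(\nabla_N^-f)^2\cP_{in})\rangle_2$, with the constant $\cD$ absorbing the powers of $\ell$ and $N$. Summing the diagonal and transport parts gives exactly the integrand in (iv). An instructive alternative, which explains the appearance of the gradient squares, is to expand directly $[\delta\langle Z_B^N(s),f\rangle_2]^2=\frac1{N^2}\sum_{i,j}\delta u_{B,i}^N(s)\,\delta u_{B,j}^N(s)\,f_if_j$: at any jump time only coordinates with $|i-j|\le1$ jump together, so the diagonal terms are compensated via \Cref{FirstTypeOfMartingales}(iv) and the off-diagonal $|i-j|=1$ terms via the cross-bracket \Cref{FirstTypeOfMartingales}(v), and the two routes agree.

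\textbf{Main obstacle.} The conceptual core is entirely routine once the Dynkin/carré-du-champ machinery is in place; the real work, and the place where errors are easy to make, is the bacteria bookkeeping in (iv): correctly accounting for the simultaneous two-site jumps, converting $(f_{i\pm1}-f_i)^2$ into the $(\nabla_N^\pm f)^2$ form, and tracking the constants $\ell,N,\fD$ so that they collapse into the single coefficient $\cD$ with the right $\cP_{in},\cP_{out}$ weights. A secondary technical point is the passage from local to genuine martingale, which I would justify uniformly using the Step~1 bound $0\le\bar u^N\le c_T$ on $[0,\tau]$, guaranteeing bounded rates and bounded jump amplitudes of the stopped process and hence integrability of all the compensators.
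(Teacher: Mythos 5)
Your proposal is correct and follows essentially the same route as the paper, which does not write out the proof but defers to \cite{Blount1992} and \cite{Nankep2018}, where exactly this computation --- identifying $\langle Z_\bullet^N(t),f\rangle_2$ as the Dynkin martingale of a linear functional and compensating its squared jumps channel by channel via the carr\'e du champ --- is carried out. Your bookkeeping checks out against the stated integrands, including the two-site transport jumps with $\delta_c g=\tfrac{1}{KN}(f_{i\pm1}-f_i)$, the identification $\cD=\ell/N^2$, and the normalization $|F_B|_i^2(u)=\mu_B u_{B,i}+\tfrac{Hp}{KW}u_{I,i}$ (which silently corrects an evident typo in the paper's display of $|F_B|_i^2$).
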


The prooves of \cref{FirstTypeOfMartingales} and of \cref{SecondTypeOfMartingales} are similar to those of their counterparts in \cite{DebusscheNankep2017}, \cite{Nankep2018} Chapter 2 or in \cite{Blount1992}.\par

\medskip
\noindent\textbf{\textit{Jump estimates.}} By definition, the truncated process satisfies
\begin{equation}\label{IntegralCompactFormOfTruncatedProcess}
\bar{u}^N(t)=u^N(0)+\int_0^t\left(\tilde{A}_N\bar{u}^N(s)+F\big(\bar{u}^N(s)\big)\right)ds+Z^N(t\wedge\tau),
\end{equation}
and its jumps have the bounds:
\begin{equation}\label{BoundsOfJumpsOfTruncatedProcess}
\left\{
\begin{array}{l}
\disp \Vert\delta \bar{u}_S^N(t)\Vert_\infty=\Vert \delta Z_S^N(t\wedge\tau)\Vert_\infty=\Vert \delta u_{S,i}^N(t\wedge\tau)\Vert_\infty= H^{-1}\vspace{0.1cm}\\
\disp \Vert\delta \bar{u}_I^N(t)\Vert_\infty=\Vert \delta Z_I^N(t\wedge\tau)\Vert_\infty=\Vert \delta u_{I,i}^N(t\wedge\tau)\Vert_\infty= H^{-1}\vspace{0.1cm}\\
\disp \Vert\delta \bar{u}_R^N(t)\Vert_\infty=\Vert \delta Z_R^N(t\wedge\tau)\Vert_\infty=\Vert \delta u_{R,i}^N(t\wedge\tau)\Vert_\infty= H^{-1}\vspace{0.1cm}\\
\disp \Vert\delta \bar{u}_B^N(t)\Vert_\infty=\Vert \delta Z_B^N(t\wedge\tau)\Vert_\infty=\Vert \delta u_{B,i}^N(t\wedge\tau)\Vert_\infty= K^{-1}
\end{array}
\right.
\end{equation}
for all $t\geq0$.\\

From now on, we consider the truncated process and write $\bar{u}^N=u^N$ in order to simplify our notation.\\

\noindent \textbf{\textit{\underline{Step 2: A Gronwall-Bellman argument.}}}
We study the difference 
\[ u^N(t)-v^N(t)=\big(u_S^N(t)-v_S^N(t),u_I^N(t)-v_I^N(t),u_R^N(t)-v_R^N(t),u_B^N(t)-v_B^N(t)\big). \]
where we recall $u^N$ is the truncated process. Variation of constant at \eqref{IntegralCompactFormOfTruncatedProcess} yields
\begin{equation}\label{MildCompactFormOfTruncatedProcess}
u^N(t)=\tilde{T}_N(t)u^N(0)+\int_0^t\tilde{T}_N(t-s)F\big(u^N(s)\big)ds+Y^N(t),
\end{equation}
where $Y^N(t)=\int_0^t\tilde{T}_N(t-s)dZ^N(s\wedge\tau)$. It should be noted that $s\mapsto Z^N\big(s\wedge\tau,\frac{i}{N}\big)$ is of bounded variation for $i=1,\cdots,N$, and $\tilde{T}_N$ may be viewed as a $4N\times4N$ matrix-valued function. 
Hence, $Y^N\big(t,\frac{i}{N}\big)$, $1\leq i\leq N$, is defined as a Stieltjes integral.

From \eqref{MildRepresentationOfTheDiscreteVersion} and \eqref{MildCompactFormOfTruncatedProcess},
\begin{align*}
u^N(t)-v^N(t) & =\tilde{T}_N(t)\big(u^N(0)-v^N(0)\big)\\
& \hspace{0.5cm}+\int_0^t\tilde{T}_N(t-s)\left[F\big(u^N(s)\big)-F\big(v^N(s)\big)\right]ds+Y^N(t).
\end{align*}
Since $F$ is Lipschitz and $\tilde{T}_N(t)$ is 
bounded, we get from Gronwall lemma
\begin{align*}
\sup_{t\leq T}\Vert u^N(t)-v^N(t)\Vert_{\bold C(J)}\leq \left(c_2\Vert u^N(0)-v^N(0)\Vert_{\bold C(J)}+\sup_{t\leq T}\Vert Y^N(t)\Vert_{\bold C(J)}\right)\text{e}^{c_2TL_F}.
\end{align*}
By assumption, 
\[ \Vert u^N(0)-v^N(0)\Vert_{\bold C(J)}\leq \Vert u^N(0)-v(0)\Vert_{\bold C(J)}+\Vert \tilde{P}_Nv(0)-v(0)\Vert_{\bold C(J)}\longrightarrow0 \] 
in probability. Therefore, the proof of \cref{TheLLNForSpatialCholera} is completed if we show that 
\begin{equation}\label{MartingalePartConvergenceToZero}
\sup_{t\leq T}\Vert Y^N(t)\Vert_{\bold C(J)}\longrightarrow0\hspace{0.3cm} \text{in probability}.
\end{equation}
As $Y^N=\big(Y_S^N,Y_I^N,Y_R^N,Y_B^N\big)$, it suffices to prove that \[ \sup_{t\leq T}\Vert Y_{index}^N(t)\Vert_{\bold C(J)}\longrightarrow0\hspace{0.2cm} \text{in probability},\hspace{0.3cm}\text{for}\hspace{0.3cm}\small index\normalsize = S,I,R,B, \] where $Y_{index}^N(t)=Z_{index}^N(t\wedge\tau)$ for $\small index \normalsize =S,I,R$, and $Y_B^N(t)=\int_0^tT_N(t-s)dZ_B^N(s\wedge\tau)$.\\

Below, each component is treated at once. 
We first introduce a very useful result. 

\begin{lemma}\label{BlountExpectationEstimate} \textbf{\texttt{(Lemma 4.4, \cite{Blount1992})}}
Let $m(t)$ be a bounded martingale of finite variation defined on $[t_0,t_1]$, with $m(t_0)~=~0$, and satisfying:\vspace{0.1cm}\par 

\textbf{(i)} $m$ is right-continuous with left limits.\par 
\textbf{(ii)} $|\delta m(t)|\leq 1$ for $t_0\leq t\leq t_1$. \par 
\textbf{(iii)} $\displaystyle \sum_{t_0\leq s\leq t}[\delta m(s)]^2-\int_{t_0}^tg(s)ds$ is a mean $0$ martingale with $0\leq g(s)\leq h(s)$, where $h(s)$ is a bounded deterministic function and $g(s)$ is $\displaystyle \mathcal{F}_t^N$-adapted.\par 

Then, \[ \mathds{E}\left[\text{e}^{m(t_1)}\right]\leq \exp\left(\frac{3}{2}\int_{t_0}^{t_1}h(s)ds\right). \]
\end{lemma}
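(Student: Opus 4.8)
The plan is to build a supermartingale-type estimate for $e^{m(t)}$ and then close the argument with Gronwall's inequality. Since $m$ is a bounded martingale of finite variation on a compact interval, it is a pure-jump process (its continuous martingale part, being a continuous finite-variation martingale null at $t_0$, must vanish), with finitely many jumps each of size at most $1$ by (ii). Hence I would start from the exact jump expansion
\[ e^{m(t)}=1+\sum_{t_0<s\leq t}e^{m(s^-)}\big(e^{\delta m(s)}-1\big), \]
and split each increment as $e^{\delta m(s)}-1=\delta m(s)+R(s)$ with $R(s):=e^{\delta m(s)}-1-\delta m(s)$.

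The first term produces a genuine martingale: $M_1(t):=\sum_{t_0<s\leq t}e^{m(s^-)}\,\delta m(s)$ is the integral of the predictable, bounded integrand $s\mapsto e^{m(s^-)}$ against the martingale $m$, so that $\E[M_1(t)]=0$ for every $t$; boundedness of $m$ is exactly what guarantees the integrability needed here. For the remainder I would invoke the elementary inequality $e^x-1-x\leq\frac{3}{2}x^2$, valid for all $x\leq 1$ (a one-line convexity check, since $3-e^x>0$ there), which by (ii) gives $R(s)\leq\frac{3}{2}[\delta m(s)]^2$ at every jump. As $e^{m(s^-)}>0$, this yields
\[ e^{m(t)}=1+M_1(t)+\sum_{t_0<s\leq t}e^{m(s^-)}R(s)\leq 1+M_1(t)+\frac{3}{2}\sum_{t_0<s\leq t}e^{m(s^-)}[\delta m(s)]^2. \]

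Next I would pass to compensators. Assumption (iii) states that $\int_{t_0}^t g(s)\,ds$ is the compensator of $\sum_{s\leq t}[\delta m(s)]^2$; since $e^{m(s^-)}$ is predictable and bounded, the compensator of $\sum_{s\leq t}e^{m(s^-)}[\delta m(s)]^2$ is $\int_{t_0}^t e^{m(s^-)}g(s)\,ds$. Taking expectations, using $\E[M_1(t)]=0$, the bound $g\leq h$, and $\E[e^{m(s^-)}]=\E[e^{m(s)}]$ off the countable jump set, I obtain for $\phi(t):=\E[e^{m(t)}]$ the integral inequality
\[ \phi(t)\leq 1+\frac{3}{2}\int_{t_0}^t h(s)\,\phi(s)\,ds. \]
Gronwall's lemma then gives $\phi(t_1)\leq\exp\!\big(\frac{3}{2}\int_{t_0}^{t_1}h(s)\,ds\big)$, which is the claim.

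The main obstacle is the clean justification of the compensation steps rather than any hard estimate: one must verify that $M_1$ is genuinely a mean-zero martingale (and not merely a local one), and that replacing $\sum e^{m(s^-)}[\delta m(s)]^2$ by its compensator $\int e^{m(s^-)}g(s)\,ds$ is legitimate. Both rest on the standing hypotheses that $m$ is bounded (so $e^{m(s^-)}$ is a bounded predictable integrand) and of finite variation with finitely many jumps on $[t_0,t_1]$, which make all the sums absolutely convergent and all the stochastic integrals honest martingales. The constant $\frac{3}{2}$ is then simply inherited from the elementary inequality applied to the jump remainder.
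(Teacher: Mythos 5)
The paper itself offers no proof of this lemma --- it is imported verbatim as Lemma 4.4 of \cite{Blount1992} --- so your proposal has to stand on its own; its overall strategy (expand $e^{m(t)}$, isolate a mean-zero martingale, bound the second-order jump remainder via $e^{x}-1-x\le\tfrac{3}{2}x^{2}$ for $x\le1$, compensate using (iii), and close with Gronwall) is the standard route and does produce the stated constant. There is, however, a genuine flaw in your first step. From ``bounded finite-variation martingale'' you infer that $m$ is a pure jump process, i.e.\ $m(t)=\sum_{t_0<s\le t}\delta m(s)$, and you build the exact expansion $e^{m(t)}=1+\sum_{t_0<s\le t}e^{m(s^-)}\big(e^{\delta m(s)}-1\big)$ on that. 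The inference is false: a finite-variation martingale is purely discontinuous in the martingale sense (no continuous \emph{martingale} part), but it need not equal the sum of its jumps. Precisely the martingales to which this lemma is applied in the paper, namely $m(t)=\theta H\big\langle Z_S^N(t\wedge\tau),f\big\rangle_2$ with $Z_S^N(t)=u_S^N(t)-u_S^N(0)-\int_0^t\psi_S^N\big(u^N(s)\big)ds$, carry a nonzero absolutely continuous drift component. For such $m$ your displayed identity for $e^{m(t)}$ is wrong, and $M_1(t)=\sum_{t_0<s\le t}e^{m(s^-)}\delta m(s)$ is \emph{not} a mean-zero martingale: it is the integral of $e^{m(s^-)}$ against the uncompensated jump sum, not against $m$, so $\E[M_1(t)]=0$ fails. (The further claim that there are only finitely many jumps is also unjustified at this level of generality, though harmless in the application.)

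The two errors cancel, which is why your final integral inequality is nevertheless correct, but the argument must be rewritten to make that visible. The correct starting point is the change-of-variables formula for c\`adl\`ag finite-variation semimartingales,
\[ e^{m(t)}=1+\int_{t_0}^{t}e^{m(s^-)}\,dm(s)+\sum_{t_0<s\le t}e^{m(s^-)}\big(e^{\delta m(s)}-1-\delta m(s)\big), \]
in which the stochastic integral is taken against $m$ itself (your $M_1$ plus the integral against the continuous part of $m$). That integral \emph{is} a genuine mean-zero martingale, since $e^{m(s^-)}$ is bounded and predictable and $m$ is a bounded, hence square-integrable, martingale with $\E[m]_{t_1}=\E\big[\sum_{t_0\le s\le t_1}[\delta m(s)]^2\big]\le\int_{t_0}^{t_1}h(s)\,ds<\infty$ by (iii). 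From that point on, your remainder bound $e^{\delta m(s)}-1-\delta m(s)\le\tfrac{3}{2}[\delta m(s)]^2$, the replacement of $\sum e^{m(s^-)}[\delta m(s)]^2$ by its compensator $\int e^{m(s^-)}g(s)\,ds$, the bound $g\le h$, and the Gronwall step are all sound and yield $\E[e^{m(t_1)}]\le\exp\big(\tfrac{3}{2}\int_{t_0}^{t_1}h(s)\,ds\big)$ as claimed.
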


\noindent \textbf{\textit{Components related to Human.}} 
Let us start with $Y_S^N$. 
We want to prove that \[ \P\left\{\sup_{t\leq T}\Vert Y_S^N(t)\Vert_\infty>\epsilon_0\right\}\longrightarrow0. \]
Fix $\bar{t}\in(0,T]$, $i\in\{1,\cdots,N\}$, and, for $0\leq t\leq \bar{t}$, set \[ f:=N\1_i \hspace{1cm}\text{and}\hspace{1cm}\bar{m}_S(t)=\left\langle Z_S^N(t\wedge\tau),f\right\rangle_2. \]
Then, $\bar{m}_S=\big\{\bar{m}_S(t),0\leq t\leq \bar{t}\big\}$ is a mean zero martingale such that 
\[ \bar{m}_S(\bar{t})=Z_{S,i}^N(\bar{t}\wedge\tau)=Y_{S,i}^N(\bar{t}). \] 
From \cref{SecondTypeOfMartingales}, 
\[ \sum_{s\leq t\wedge\tau}[\delta\bar{m}_S(s)]^2-\frac{1}{NH}\int_0^t\left\langle |F_S|^2\big(u^N(s)\big),f^2\right\rangle_2ds \] 
is a mean 0 martingale for $0\leq t\leq\bar{t}$. 
Now, \eqref{BoundsOfJumpsOfTruncatedProcess} yields \[ |\delta\bar{m}_S(t)|\leq \Vert \delta Z_S^N(t\wedge\tau)\Vert_\infty=\Vert \delta u_S^N(t\wedge\tau)\Vert_\infty\leq H^{-1}. \]
Then, for $\theta\in[0,1]$, \[ m_S(t)=\theta H\bar{m}_S(t) \] defines a mean zero \textit{càdlàg} martingale such that $|\delta m_S(t)|\leq 1$ and $[\delta m_S(s)]^2=\theta^2H^2[\delta\bar{m}_S(s)]^2$.   
Thus, from \cref{SecondTypeOfMartingales}, \[ \sum_{s\leq t}[\delta m_S(s)]^2-\int_0^{t\wedge\tau}g_S^N(s)ds \] defines a mean zero \textit{càdlàg} martingale, where
\begin{align*}
g_S^N(s) & =\frac{\theta^2H}{N}\left\langle |F_S|^2\big(u_S^N(s)\big),f^2\right\rangle_2=\theta^2H|F_S|_i^2\big(u_S^N(s)\big)\leq c\theta^2H
\end{align*} 
for $0\leq s\leq t\wedge\tau$. 
Here $c=c(\mu,\rho,\beta,C_T)$, and below, $c$ is considered as a generic constant that depends on $T$. It follows that $0\leq g_S^N(s)\leq h_S^N(s)$, where $h_S^N(s)=c\theta^2H$ satisfies $\int_0^th_S^N(s)ds\leq c\theta^2H$ for $0\leq t\leq \bar{t}\leq T$. 
Therefore, \cref{BlountExpectationEstimate} implies $\E[\text{e}^{m_S(\bar{t})}]\leq \exp(c\theta^2 H)$, and from Markov's inequality,
\begin{align*}
\P\left\{Y_{S,i}^N(\bar{t})>\epsilon_0\right\} & =\P\left\{\bar{m}_{S}(\bar{t})>\epsilon_0\right\}=\P\left\{m_{S}(\bar{t})>\theta H\epsilon_0\right\}=\P\left\{\text{e}^{m_{S}(\bar{t})}>\text{e}^{\theta H\epsilon_0}\right\}\\
& \leq \text{e}^{-\theta H\epsilon_0}\E[\text{e}^{m_S(\bar{t})}]\\
& \leq \exp\left[\theta H(c\theta-\epsilon_0)\right].
\end{align*}
Thus we can choose $\theta$ such that 
\[ \P\left\{Y_{S,i}^N(\bar{t})>\epsilon_0\right\}\leq \text{e}^{-\eta\epsilon_0^2H}, \quad \text{for some} \quad \eta=\eta(T, C_T)>0, \] independently of $N$, $H$, $i$ and $\bar{t}$. 
Indeed, one may solve $c\theta^2-\epsilon_0\theta+\eta\epsilon_0^2\leq 0$ w.r.t. $\theta$. Below, $\eta$ is generic. 
The relation above holds for $\P\big\{-Y_{S,i}^N(\bar{t})>\epsilon_0\big\}$, repeating the argument with the processes $\bar{m}_S$ and $Y_S^N$ replaced by their opposites $-\bar{m}_S$ and $-Y_S^N$.
Therefore, 
\[ \P\left\{\left|Y_{S,i}^N(t)\right|>\epsilon_0\right\}\leq 2\text{e}^{-\eta\epsilon_0^2H},\hspace{0.3cm}\text{for}\hspace{0.3cm}0\leq t\leq T\text{ and } i=1,\cdots,N. \]
Since $\Vert Y_S^N(t)\Vert_\infty=\sup_{i=1,\cdots,N}|Y_{S,i}^N(t)|$ and $Y_S^N(0)=0$,
\begin{equation}\label{BoundInTheSupremumNormOnY_SOf_t_InProbability}
\begin{array}{ll}
\disp \P\left\{\left\Vert Y_{S}^N(t)\right\Vert_\infty>\epsilon_0\right\} & \disp =\P\left\{\exists i=1,\cdots,N:|Y_{S,i}^N(t)|>\epsilon_0\right\}\\
 & \disp \leq \sum_{i=1}^N\P\left\{\left|Y_{S,i}^N(t)\right|>\epsilon_0\right\}\\
 & \disp \leq 2Ne^{-\eta\epsilon_0^2H}
\end{array}
\end{equation}
for $0\leq t\leq T$, $\eta=\eta(C_T)>0$. \par 

Now, we show that \eqref{BoundInTheSupremumNormOnY_SOf_t_InProbability} holds with $\Vert Y_S^N(t)\Vert_\infty$ replaced by $\sup_{t\leq T}\Vert Y_S^N(t)\Vert_\infty$ and $N$ replaced by $N^3$ on the r.h.s. Indeed, we subdivide $[0,T]$ into $N^2$ subintervals denoted $I_n(T)=[\frac{nT}{N^2},\frac{(n+1)T}{N^2}]$, $0\leq n\leq N^2-1$. 
Observing that we can always write 
\[ Y_S^N(t)=\tilde{m}_S(t)+Z_S^N\left(\frac{nT}{N^2}\wedge\tau\right)=\tilde{m}_S(t)+Y_S^N\left(\frac{nT}{N^2}\right), \] 
where $\tilde{m}_S(t)=Z_S^N(t\wedge\tau)-Z_S^N\left(\frac{nT}{N^2}\wedge\tau\right)$ is a mean zero martingale for $t\in I_n(T)$, we get 
\begin{equation}\label{IntermediateBoundUniformOnSmallTimeIntervalForY_S}
\sup_{I_n(T)}\Vert Y_S^N(t)\Vert_\infty\leq \sup_{I_n(T)}\Vert \tilde{m}_S(t)\Vert_\infty+\left\Vert Y_S^N\left(\frac{nT}{N^2}\right)\right\Vert_\infty. 
\end{equation}
We are using the notation $\disp \sup_{I_n(T)}$ for $\disp \sup_{t\in I_n(t)}$.
As previously, we fix $i=1,\cdots,N$, $\theta\in[0,1]$ and set $m_S(t)=\theta H\tilde{m}_S\left(t,\frac{i}{N}\right)$ for $t\in I_n(T)$. 
Thus, for $s\in I_n(T)$, we have $|\delta m_S(s)|\leq 1$, and \cref{SecondTypeOfMartingales} yields 
\[ \sum_{\frac{nT}{N^2}\wedge\tau\leq s\leq t\wedge\tau}[\delta m_S(t)]^2-\int_{\frac{nT}{N^2}\wedge\tau}^{t\wedge\tau}g_S^N(s)ds \] 
is a mean $0$ martingale, with $g_S^N(s)=\theta^2H|F_S|^2\big(u^N(s)\big)\leq \theta^2Hc=h_S^N(s)$. 
Then $0\leq g_S^N(s)\leq h_S^N(s)$ and $\int_{\frac{nT}{N^2}\wedge\tau}^{t\wedge\tau}h_S^N(s)ds \leq \int_{I_n(T)}h_S^N(s)ds \leq c\theta^2H$ since $h_S^N$ is positive. 
Thus, \cref{BlountExpectationEstimate} yields $\E\big[\exp\big\{m_S\big(\frac{(n+1)T}{N^2}\big)\big\}\big]\leq \exp\big(c\theta^2H\big)$, and from Doob's inequalities, 
\begin{align*}
\P\left\{\sup_{I_n(T)}\tilde{m}_S\left(t,\frac{i}{N}\right)>\epsilon_0\right\} & \leq \text{e}^{-\theta H\epsilon_0}\E\left[\exp\left\{m_S\left(\frac{(n+1)T}{N^2}\right)\right\}\right]\\
& \leq \exp\big[\theta H(c\theta-\epsilon_0)]\hspace{1cm}\leq \hspace{1cm} \text{e}^{-\eta\epsilon_0H}
\end{align*}
where $\eta=\eta(C_T)>0$, independently of $N$, $H$ and $i$.  
A suitable $\theta$ has been chosen as previously. 
Also, the same holds for $-\tilde{m}_S\big(t,\frac{i}{N}\big)$. This shows that
\begin{equation}\label{BoundOfTheIntermediateMartingalePartOnSmallTimeIntervalForY_S}
\P\left\{\sup_{I_n(T)}\Vert\tilde{m}_S(t)\Vert_\infty>\epsilon_0\right\}\leq 2N\text{e}^{-\eta\epsilon_0^2H}.
\end{equation}
From \eqref{BoundInTheSupremumNormOnY_SOf_t_InProbability}, \eqref{IntermediateBoundUniformOnSmallTimeIntervalForY_S} and \eqref{BoundOfTheIntermediateMartingalePartOnSmallTimeIntervalForY_S},
\begin{align*}
\P\left\{\sup_{I_n(T)}\Vert Y_S^N(t)\Vert_\infty>\epsilon_0\right\} & \leq \P\left\{\left\Vert Y_S^N\left(\frac{nT}{N^2}\right)\right\Vert_\infty+\sup_{I_n(T)}\Vert \tilde{m}_S(t)\Vert_\infty>\epsilon_0\right\}\\
& \leq  \P\left\{\left\Vert Y_S^N\left(\frac{nT}{N^2}\right)\right\Vert_\infty>\frac{\epsilon_0}{2}\right\}+\P\left\{\sup_{I_n(T)}\Vert \tilde{m}_S(t)\Vert_\infty>\frac{\epsilon_0}{2}\right\}\\
& \leq 2N\text{e}^{-\eta H\epsilon_0^2/2}+2N\text{e}^{-\eta H\epsilon_0^2/2}\\
& \leq 4N\text{e}^{-\eta H\epsilon_0^2}.
\end{align*}
						Hence, 
\begin{align*}
\P\left\{\sup_{t\leq T}\Vert Y_S^N(t)\Vert_\infty>\epsilon_0\right\} & \leq \sum_{n=0}^{N^2-1}\P\left\{\sup_{I_n(T)}\Vert Y_S^N(t)\Vert_\infty>\epsilon_0\right\} \leq 4N^3\text{e}^{-\eta H\epsilon_0^2}
\end{align*}
and it follows that
\begin{align*}
 \P\left\{\sup_{t\leq T}\Vert Y_S^N(t)\Vert_\infty>\epsilon_0\right\} & \leq 4N^3\exp\left(-\eta \epsilon_0^2H\right) \leq \exp\big(\log4+3\log N-\eta H\big).
\end{align*}
The r.h.s. in the second inequality vanishes, since we are assuming $H^{-1}\log N\rightarrow 0$. The expected result for the component related to susceptible is then proved. $\square$\\

Concerning the other human compartment components $Y_I^N$ and $Y_R^N$ related to infected and recovered respectively, we treat them using exactly the same argument as for $Y_S^N$, and we prove that they vanish in probability at the limit. $\square$\\


\noindent \textbf{\textit{Component related to bacteria.}}
\cref{TheLLNForSpatialCholera} is proved if we show that 
\[ \P\left\{\sup_{t\leq T}\Vert Y_B^N(t)\Vert_{\bold C(J)}>\epsilon_0\right\}\longrightarrow0. \]
A similar approach to that of the previous section is used. However, because of the linear part due to the transport, we will need an additional result.
\begin{lemma}\label{Blount_scalar_product_estimate}\textbf{\texttt{(Lemma 4.3, \cite{Blount1992})}}
Set $\displaystyle f=N\mathds{1}_j$. Then \par 
\noindent $\displaystyle \hspace{0cm} \left\langle\left(\nabla_N^+T_N(t)f\right)^2+\left(\nabla_N^-T_N(t)f\right)^2+\left(T_N(t)f\right)^2,1\right\rangle_2\leq h_N(t),\hspace{0cm}\text{ with }\hspace{0cm}\int_0^th_N(s)ds\leq \bar{c}N+t$. 
\end{lemma}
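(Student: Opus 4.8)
The plan is to work entirely in the Hilbert space $\big(\H^N,\langle\cdot,\cdot\rangle_2\big)$ and to exploit the adjointness structure of the discrete operators on the $1$-periodic lattice. Write $g(t)=T_N(t)f$, so that $g$ solves $\dot g=A_Ng$ with $g(0)=f=N\1_j$, and abbreviate the left-hand side of the claimed inequality by
\[ E(t)=\big\langle(\nabla_N^+g(t))^2+(\nabla_N^-g(t))^2+(g(t))^2,1\big\rangle_2=\Vert\nabla_N^+g(t)\Vert_2^2+\Vert\nabla_N^-g(t)\Vert_2^2+\Vert g(t)\Vert_2^2. \]
First I would record the summation-by-parts identities that hold by periodicity on $\H^N$: $(\nabla_N^+)^*=-\nabla_N^-$, $(\nabla_N^-)^*=-\nabla_N^+$, the centered gradient $\nabla_N$ is skew-adjoint, and $\Delta_N=\nabla_N^+\nabla_N^-=\nabla_N^-\nabla_N^+$ is self-adjoint and non-positive. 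These give $\Vert\nabla_N^+g\Vert_2^2=\Vert\nabla_N^-g\Vert_2^2=-\langle\Delta_Ng,g\rangle_2$, so that $E(t)=-2\langle\Delta_Ng(t),g(t)\rangle_2+\Vert g(t)\Vert_2^2$; it then remains to integrate the two pieces. Note $\Vert f\Vert_2^2=\frac1N\cdot N^2=N$.

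For the gradient piece I would use the basic energy identity. Since $\nabla_N$ is skew-adjoint the advection term contributes nothing, and
\[ \frac{d}{dt}\Vert g(t)\Vert_2^2=2\langle A_Ng(t),g(t)\rangle_2=2\fD\langle\Delta_Ng(t),g(t)\rangle_2=-2\fD\Vert\nabla_N^+g(t)\Vert_2^2. \]
Integrating and using $\Vert g(t)\Vert_2^2\ge0$ yields $\int_0^t\Vert\nabla_N^+g(s)\Vert_2^2\,ds\le\frac{1}{2\fD}\Vert f\Vert_2^2=\frac{N}{2\fD}$, so the two gradient terms together integrate to at most $N/\fD$. Here $\fD=\ell/2N^2$ is the \emph{fixed} diffusion constant of \cref{ConstantReservoirOnSitesAndConvergenceConditions}, so this contribution is of order $N$, exactly as permitted.

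The delicate term is $\int_0^t\Vert g(s)\Vert_2^2\,ds$: the crude bound $\Vert g(s)\Vert_2^2\le\Vert f\Vert_2^2=N$ only gives $Nt$, which would destroy the desired form $\bar cN+t$. The remedy is to split off the conserved mean. Because $\nabla_N$ and $\Delta_N$ annihilate constants and map into zero-mean functions, the mean $\langle g(t),1\rangle_2=\langle f,1\rangle_2=1$ is conserved; writing $g(t)=1+g^\perp(t)$ with $g^\perp(t)$ of zero mean gives $\Vert g(t)\Vert_2^2=1+\Vert g^\perp(t)\Vert_2^2$, whence $\int_0^t\Vert g(s)\Vert_2^2\,ds=t+\int_0^t\Vert g^\perp(s)\Vert_2^2\,ds$. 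On the zero-mean subspace the discrete Poincaré inequality $\Vert\nabla_N^+g^\perp\Vert_2^2\ge\gamma_N\Vert g^\perp\Vert_2^2$ holds, where $\gamma_N=4N^2\sin^2(\pi/N)$ is the smallest non-zero eigenvalue of $-\Delta_N$ and, crucially, stays bounded below by a positive constant uniformly in $N$ (indeed $\gamma_N\uparrow4\pi^2$). Since $g^\perp$ solves the same equation as $g$, the energy identity gives $\frac{d}{dt}\Vert g^\perp(t)\Vert_2^2\le-2\fD\gamma_N\Vert g^\perp(t)\Vert_2^2$, hence exponential decay and $\int_0^t\Vert g^\perp(s)\Vert_2^2\,ds\le\frac{\Vert g^\perp(0)\Vert_2^2}{2\fD\gamma_N}\le\frac{N}{2\fD\gamma_N}$, which is of order $N$. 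Combining the three estimates gives $\int_0^tE(s)\,ds\le N/\fD+t+N/(2\fD\gamma_N)\le\bar cN+t$ for a constant $\bar c$ depending only on $\fD$, so the choice $h_N(t)=E(t)$ proves the lemma.

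The main obstacle, as indicated, is precisely the undifferentiated term: one must resist bounding $\Vert g\Vert_2^2$ pointwise and instead extract the conserved zero-mode (which alone produces the harmless additive $t$) and exploit the uniform spectral gap of $-\Delta_N$ to control the remainder by order $N$. An equivalent and fully explicit route is discrete-Fourier diagonalization: expanding $f=N\1_j$ in the modes $e^{2\pi\mathrm{i}kn/N}$ (all Fourier coefficients have modulus one), the eigenvalues of $A_N$ have real part $-4\fD N^2\sin^2(\pi k/N)$ (the advection only rotates phases), so that $E(t)=\sum_{k}\big(8N^2\sin^2(\pi k/N)+1\big)e^{-8\fD N^2\sin^2(\pi k/N)t}$; the same bound then follows term by term after summing a geometric-type series and using the identity $\sum_{k=1}^{N-1}\frac{1}{\sin^2(\pi k/N)}=\frac{N^2-1}{3}$.
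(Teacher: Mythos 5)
Your argument is correct, but note that the paper itself does not prove this statement: \cref{Blount_scalar_product_estimate} is imported verbatim from Lemma~4.3 of \cite{Blount1992}, and the only trace of its proof in the paper is the later remark that one may take $h_N(t)=1+4\sum_{m>0}e^{-2\beta_{m,N}t}(\beta_{m,N}+1)$ with $\beta_{m,N}=2N^2\big(1-\cos(\pi m/N)\big)$ the eigenvalues of $-\Delta_N$ --- that is, Blount's proof is precisely the discrete-Fourier diagonalization you sketch in your final paragraph. Your primary route is genuinely different and self-contained: an $L^2$ energy identity (skew-adjointness of $\nabla_N$ kills the advection), conservation of the mean $\langle T_N(t)f,1\rangle_2=1$, and the spectral gap $\gamma_N=4N^2\sin^2(\pi/N)$ of $-\Delta_N$ on the zero-mean subspace, which is bounded below uniformly in $N$. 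The decisive point --- that the undifferentiated term $\Vert T_N(t)f\Vert_2^2$ must not be bounded crudely by $\Vert f\Vert_2^2=N$ but split into the conserved zero mode (producing the harmless additive $t$) and an exponentially decaying remainder whose time integral is $O(N)$ since $\fD=\ell/2N^2$ is held constant --- is correctly identified and executed. What each approach buys: your energy argument avoids diagonalizing $A_N$ altogether and only needs the lowest nonzero eigenvalue of $-\Delta_N$, whereas the Fourier computation produces the explicit $h_N$ that the paper actually quotes and reuses in the estimate for the bacteria component. Your choice $h_N=E$ satisfies the first inequality with equality and is bounded on compact time intervals, as required for the subsequent application of \cref{BlountExpectationEstimate}, so nothing is lost.
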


Fix $\bar{t}\in (0,T]$ and $i=1,\cdots,N$. For $0\leq t\leq \bar{t}$, set 
\[ f=N\1_i\hspace{0.5cm}\text{and}\hspace{0.5cm}\bar{m}_B(t)=\left\langle \int_0^tT_N(\bar{t}-s)dZ_B^N(s\wedge\tau),f\right\rangle_2. \]
The process $\bar{m}_B=\big\{\bar{m}_B(t),0\leq t\leq\bar{t}\big\}$ is a mean $0$ martingale such that 
\[ \bar{m}_B(\bar{t})=Y_B^N\left(\bar{t},\frac{i}{N}\right)=Y_{B,i}^N(\bar{t}). \] 

The subsequent result is also needed. It follows from \cite{Blount1992} and \cite{DebusscheNankep2017} or Chapter 2 of \cite{Nankep2018}.

\begin{lemma}\label{ThirdTypeOfMartingales}\texttt{(Martingales of type 3)}\vspace{0.1cm}\par 
\[
\begin{array}{l}
\disp \sum_{s\leq t\wedge\tau}[\delta\bar{m}_B(s)]^2\\
\hspace{1cm} -\frac{1}{N\mu}\int_0^{t\wedge\tau}\left[\left\langle u_B^N(s),\cD\big(\nabla_N^+T_N(\bar{t}-s)f\big)^2\cP_{out}+\cD\big(\nabla_N^-T_N(\bar{t}-s)f\big)^2\cP_{in}\right\rangle_2\right.\\
\hspace{4cm} +\left.\left\langle |F_B|^2\big(u^N(s)\big),\big(T_N(\bar{t}-s)f\big)^2\right\rangle_2\right]ds
\end{array}
\]
defines a mean $0$ càdlàg martingale for $0\leq t\leq\bar{t}$.
\end{lemma}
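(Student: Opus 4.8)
\noindent\emph{Proof strategy.} The plan is to recognise $\bar m_B$ as a deterministic-kernel stochastic convolution of the jump martingale $Z_B^N$, and to reduce the claim to the computation already underlying \cref{SecondTypeOfMartingales}\,(iv), but carried out with a \emph{time-dependent} test function. Recall from \eqref{ZeroTypeMartingales} that the compensator of $Z_B^N$ is absolutely continuous, so the jumps of $Z_B^N$ coincide with those of $u_B^N$ and, on $[0,\bar t]$, the stopped process $Z_B^N(\cdot\wedge\tau)$ is a finite-variation mean-zero $\cF_t^N$-martingale (finitely many jumps, since on the truncated process the rates are bounded). As we work in the finite-dimensional space $\bold H^N$, the family $s\mapsto T_N(\bar t-s)$ is smooth with bounded derivative, so $M(t):=\int_0^t T_N(\bar t-s)\,dZ_B^N(s\wedge\tau)$ is a well-defined pathwise Stieltjes integral and $\bar m_B(t)=\langle M(t),f\rangle_2$.

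First I would check that $\bar m_B$ is a mean-zero $\cF_t^N$-martingale. Approximating $M$ by Riemann sums $\sum_k T_N(\bar t-s_k)\big(Z_B^N(s_{k+1}\wedge\tau)-Z_B^N(s_k\wedge\tau)\big)$ and conditioning on $\cF_{s_k}^N$, every increment vanishes in conditional expectation because the weight $T_N(\bar t-s_k)$ is deterministic and $Z_B^N(\cdot\wedge\tau)$ is a martingale; passing to the limit, $\bar m_B$ is a martingale with $\bar m_B(0)=0$. Its jump at an event time $s$ is
\[
\delta\bar m_B(s)=\big\langle T_N(\bar t-s)\,\delta u_B^N(s),\,f\big\rangle_2=\big\langle \delta u_B^N(s),\,g_s\big\rangle_2,\qquad g_s:=T_N^{*}(\bar t-s)f ,
\]
where $T_N^{*}$ denotes the $L^2$-adjoint semigroup (the advection term $-\nu\nabla_N$ makes $A_N$ non-self-adjoint, so the adjoint must be carried along; in the purely diffusive setting of \cite{Blount1992} this point does not arise). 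Thus the jumps of $\bar m_B$ have precisely the form of the jumps of $\langle Z_B^N,f\rangle_2$ appearing in \cref{SecondTypeOfMartingales}\,(iv), with the fixed $f$ replaced by the moving function $g_s$.

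Second I would identify the compensator of $\sum_{s\le t\wedge\tau}[\delta\bar m_B(s)]^2$. Since $\bar m_B$ carries no continuous martingale part, this sum is its quadratic variation, and its $\cF_t^N$-compensator is read off from the jump intensity of the Markov process by summing $(\text{rate})\times(\delta\bar m_B)^2$ over the four bacterial transitions of \eqref{StochasticSpatialGenerator_B}. The two local transitions --- death, at rate $K\mu_B u_{B,i}$ with jump $-\tfrac1K\1_i$, and infection-driven production, at rate $\tfrac{Hp}{W}u_{I,i}$ with jump $+\tfrac1K\1_i$ --- contribute $\big\langle |F_B|^2(u^N(s)),g_s^2\big\rangle_2$, exactly as in \cref{SecondTypeOfMartingales}\,(iv). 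The two nonlocal transport transitions, with jumps $\tfrac{\1_{i\pm1}-\1_i}{K}$ at rates $K\ell\cP_{out}u_{B,i}$ and $K\ell\cP_{in}u_{B,i}$, produce --- via $\langle\1_{i\pm1}-\1_i,g_s\rangle_2=\tfrac1N\big((g_s)_{i\pm1}-(g_s)_i\big)$ and the definitions of $\nabla_N^{\pm}$ --- the gradient terms $\big\langle u_B^N(s),\fD\big((\nabla_N^+g_s)^2\cP_{out}+(\nabla_N^-g_s)^2\cP_{in}\big)\big\rangle_2$, the diffusion coefficient $\fD=\ell/2N^2$ of \cref{ConstantReservoirOnSitesAndConvergenceConditions} emerging when the transport rate $\ell$ is collected against the amplitudes $K^{-1}$ and $N^{-1}$. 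Substituting $g_s=T_N^{*}(\bar t-s)f$ reproduces the integrand in the statement, so the compensated process is a mean-zero càdlàg martingale.

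The principal obstacle is the time-dependence of the convolution kernel: unlike in \cref{SecondTypeOfMartingales}, the effective test function $g_s=T_N^{*}(\bar t-s)f$ varies with $s$, so one must justify that the compensator is still obtained by the pointwise substitution $f\mapsto g_s$ under the integral sign. This amounts to verifying that the integrand remains $\cF_s^N$-predictable --- which holds because the kernel is deterministic and is evaluated along $s$ --- and that the stochastic-convolution manipulations are legitimate, guaranteed here by the finite-dimensionality of $\bold H^N$ and the bounded variation of $Z_B^N(\cdot\wedge\tau)$ on $[0,\bar t]$. The remaining steps are identical to those behind \cref{FirstTypeOfMartingales}--\cref{SecondTypeOfMartingales} and \cite{Blount1992,DebusscheNankep2017,Nankep2018}, only performed with the moving test function $g_s$.
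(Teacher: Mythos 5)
Your proposal is correct in substance, and it is worth noting that the paper does not actually prove \cref{ThirdTypeOfMartingales}: it only asserts that the lemma ``follows from'' \cite{Blount1992}, \cite{DebusscheNankep2017} and \cite{Nankep2018}. What you write out --- viewing $\bar m_B$ as a pathwise Stieltjes convolution of the finite-variation martingale $Z_B^N(\cdot\wedge\tau)$ against the deterministic kernel $T_N(\bar t-\cdot)$, and then compensating $\sum[\delta\bar m_B]^2$ by summing $(\text{rate})\times(\text{jump})^2$ over the four bacterial transitions of \eqref{StochasticSpatialGenerator_B} --- is precisely the computation those references carry out, so your route coincides with the intended one. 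Your added value is the observation that the effective test function is $g_s=T_N^{*}(\bar t-s)f$ rather than $T_N(\bar t-s)f$: since $\nabla_N$ is skew-adjoint on the periodic lattice, $A_N=-\nu\nabla_N+\fD\Delta_N$ is not self-adjoint and $T_N^{*}\neq T_N$, a point that genuinely does not arise in the purely diffusive setting of \cite{Blount1992} and that the paper silently inherits. Be careful, though, with your closing claim that substituting $g_s$ ``reproduces the integrand in the statement'': it reproduces it only with $T_N$ replaced by $T_N^{*}$, so either the lemma as printed carries a typo from the self-adjoint case, or you must add one line noting that \cref{Blount_scalar_product_estimate} holds verbatim for the adjoint semigroup (it does, since $A_N^{*}=\nu\nabla_N+\fD\Delta_N$ is conjugate to $A_N$ by the spatial reflection $i\mapsto -i$, so all the relevant bounds are unchanged); without that remark the subsequent estimate of $g_B^N$ in the proof of \cref{TheLLNForSpatialCholera} would not literally apply. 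The remaining normalization discrepancies (the prefactor $\frac{1}{N\mu}$ versus $\frac{1}{NK}$, and $\cD$ versus $\fD$) are typos in the paper, not defects of your argument.
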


From \eqref{BoundsOfJumpsOfTruncatedProcess}, 
$|\delta\bar{m}_B(t)|\leq\Vert\delta Z_B^N(t\wedge\tau)\Vert_\infty=\Vert\delta u_B^N(t\wedge\tau)\Vert_\infty\leq K^{-1}$.
Thus, for $\theta\in[0,1]$, the process $m_B$ defined by 
\[ m_B(t)=\theta K\bar{m}_B(t)\hspace{0.5cm}\text{for}\hspace{0.5cm}0\leq t\leq \bar{t} \] is a mean $0$ martingale such that $|\delta m_B(t)|\leq 1$. 
Therefore, from \cref{ThirdTypeOfMartingales} 
\[ \sum_{s\leq t}[\delta m_B(s)]^2-\int_0^{t\wedge\tau}g_B^N(s)ds \] 
is a mean $0$ càdlàg martingale, where for $0\leq s\leq t\wedge\tau$,
\begin{align*}
g_B^N(s) & =\frac{\theta^2K}{N}\left[\left\langle u_B^N(s),\cD\big(\nabla_N^+T_N(\bar{t}-s)f\big)^2\cP_{out}+\cD\big(\nabla_N^-T_N(\bar{t}-s)f\big)^2\cP_{in}\right\rangle_2\right.\\
& \hspace{3cm} +\left.\left\langle |F_B|^2\big(u^N(s)\big),\big(T_N(\bar{t}-s)f\big)^2\right\rangle_2\right]\\
& \leq \frac{\theta^2K}{N}c\left\langle 1,\big(\nabla_N^+T_N(\bar{t}-s)f\big)^2+\big(\nabla_N^-T_N(\bar{t}-s)f\big)^2+\big(T_N(\bar{t}-s)f\big)^2\right\rangle_2\\
& \leq c\theta^2Kh_{C,1}^N(\bar{t}-s)
\end{align*}
where $\int_0^th_{C,1}^N(\bar{t}-s)ds\leq \bar{c}N+t$, thanks to \cref{Blount_scalar_product_estimate}. The constant $c$ is generic and depends on $\cD$, $T$, $c_T$ and $M_F(c_T)$. 
Moreover, from the proof of Lemma 4.3 \cite{Blount1992}, 
one can take $h_{C,1}^N(\bar{t}-s)=1+4\sum_{m>0}\text{e}^{-2\beta_{m,N}(\bar{t}-t)}(\beta_{m,N}+1)$, where $\{-\beta_{m,N}=2N^2\big(\cos(\pi m/N)-1\big)\}_{m\geq0}$ are eigen functions of the discrete Laplace $\Delta_N$. More details can be found in \cite{Blount1987}, Lemma 2.12, p12.
Hence, $g_B^N$ defines an $\cF_t^N$-adapted process such that $0\leq g_B^N(s)\leq h_B^N(s)$, where $h_B^N(s)=\frac{c\theta^2K}{N}h_{C,1}^N(\bar{t}-s)$ is a bounded deterministic function on $[0,\bar{t}]$. As $N\rightarrow\infty$ and $\bar{t}\leq T<\infty$, we may assume $\frac{\bar{t}}{N}\leq 1$ and get $\int_0^{\bar{t}}h_B^N(s)ds\leq c\theta^2K\left(\bar{c}+\frac{\bar{t}}{N}\right)\leq c\theta^2K$.  
\Cref{BlountExpectationEstimate} then implies $\E\big[\text{e}^{m_B(\bar{t})}\big]\leq \exp\big(c\theta^2K\big)$, and by Markov's inequality,
\begin{align*}
\P\left\{Y_{B,i}^N(\bar{t})>\epsilon_0\right\}\leq \text{e}^{-\theta K\epsilon_0}\E\big[\text{e}^{m_B(\bar{t})}\big]\leq \exp\big[\theta K(c\theta-\epsilon_0)\big].
\end{align*}
Thus, we can choose $\theta$ such that 
\[ \P\left\{Y_{B,i}^N(\bar{t})>\epsilon_0\right\}\leq \text{e}^{-\eta\epsilon_0^2K}, \hspace{0.5cm}\text{for}\hspace{0.2cm}\eta=\eta(T,c_T)>0, \] independently of $N$, $K$, $i$ and $\bar{t}$. The constant $\eta=\eta(T,c_T)$ is generic in the following. 
A similar relation is derived, for $-m_B$ and $-Y_B^N$, so that 
\[ \P\left\{|Y_{B,i}^N(t)|>\epsilon_0\right\}\leq 2\text{e}^{-\eta\epsilon_0^2K},\hspace{0.5cm}\text{for}\hspace{0.2cm}0\leq t\leq T\hspace{0.3cm}\text{and}\hspace{0.2cm}i=1,\cdots,N. \]
Since $\Vert Y_B^N(t)\Vert_\infty=\sup_{i=1,\cdots,N}|Y_{B,i}^N(t)|$ and $Y_B^N(0)=0$,
\begin{equation}\label{BoundInTheSupremumNormOnY_BOf_t_InProbability}
\disp \P\left\{\Vert Y_B^N(t)\Vert_\infty>\epsilon_0\right\} \leq 2N\text{e}^{-\eta\epsilon_0^2K},\hspace{0.5cm}\text{for}\hspace{0.3cm}0\leq t\leq T.
\end{equation}

We now show that \eqref{BoundInTheSupremumNormOnY_BOf_t_InProbability} holds when $\Vert Y_B^N(t)\Vert_\infty$ is replaced by $\sup_{t\leq T}\Vert Y_B^N(t)\Vert_\infty$ and $N$ replaced by $N^3$. From Duhamel's formula, $Y_B^N(t)=\int_0^tT_N(t-s)dZ_B^N(s\wedge\tau)$ satisfies the stochastic differential equation (SDE) \[ dY_B^N(t)=A_NY_B^N(t)+dZ_B^N(t\wedge\tau), \] whose integral formulation is 
\begin{equation}\label{IntegralFormulationOfY_BSDE}
Y_B^N(t)=Z_B^N(t\wedge\tau)+\int_0^tA_NY_B^N(s)ds.
\end{equation}
We subdivide $[0,T]$ into $N^2$ subintervals $I_n(T)=\big[\frac{nT}{N^2},\frac{(n+1)T}{N^2}\big]$, $0\leq n\leq N^2-1$.
Taking $t=\frac{nT}{N^2}$ in \eqref{IntegralFormulationOfY_BSDE} yields \[ Y_B^N\left(\frac{nT}{N^2}\right)=Z_C^N\left(\frac{nT}{N^2}\wedge\tau\right)+\int_0^{\frac{nT}{N^2}}A_NY_B^N(s)ds. \]
Thus, we can write 
\begin{align*}
Y_B^N(t) & = Z_B^N(t\wedge\tau)+\int_0^{\frac{nT}{N^2}}A_NY_B^N(s)ds+\int_{\frac{nT}{N^2}}^tA_NY_B^N(s)ds\\
& = Z_B^N(t\wedge\tau)+\left[Y_B^N\left(\frac{nT}{N^2}\right)-Z_B^N\left(\frac{nT}{N^2}\wedge\tau\right)\right]+\int_{\frac{nT}{N^2}}^tA_NY_B^N(s)ds\\
& = Y_B^N\left(\frac{nT}{N^2}\right)+\int_{\frac{nT}{N^2}}^tA_NY_B^N(s)ds+\tilde{m}_B(t),
\end{align*}
where $\tilde{m}_B(t)=Z_B^N(t\wedge\tau)-Z_B^N\big(\frac{nT}{N^2}\wedge\tau\big)$ defines a mean $0$ martingale, for $t\in I_n(T)$, such that $|\delta\tilde{m}_B(t)|\leq K^{-1}$.
Thus, 
\[ \Vert Y_B^N(t)\Vert_\infty\leq \left\Vert Y_B^N\left(\frac{nT}{N^2}\right)\right\Vert_\infty+cN^2\int_{\frac{nT}{N^2}}^t\Vert Y_B^N(s)\Vert_\infty ds+\Vert \tilde{m}_B(t)\Vert_\infty, \] 
where $c$ is a constant independent of $N$, such that $\Vert A_N\Vert_\infty\leq cN^2$. 
From Gronwall lemma,
\begin{equation}\label{IntermediateBoundUniformOnSmallTimeIntervalForY_B}
\sup_{I_n(T)}\Vert Y_B^N(t)\Vert_\infty\leq \left(\left\Vert Y_B^N\left(\frac{nT}{N^2}\right)\right\Vert_\infty+\sup_{I_n(T)}\Vert \tilde{m}_B(t)\Vert_\infty\right)\text{e}^{cT}.
\end{equation}

As previously, for $i=1,\cdots,N$ and $\theta\in[0,1]$, we set 
\[ m_B(t)=\theta K\tilde{m}_B\left(t,\frac{i}{N}\right), \hspace{0.5cm}\text{for}\hspace{0.3cm}t\in I_n(T)=\left[\frac{nT}{N^2},\frac{(n+1)T}{N^2}\right]. \]  
The new defined mean zero martigale $m_B$ satisfies $|\delta m_B(t)|\leq1$, and by \cref{SecondTypeOfMartingales}, \[ \sum_{\frac{nT}{N^2}\wedge\tau\leq s\leq t\wedge\tau}[\delta m_B(s)]^2-\int_{\frac{nT}{N^2}\wedge\tau}^{t\wedge\tau}g_B^N(s)ds \] is a mean $0$ martingale for $t\in I_n(T)$, where 
\begin{align*}
g_B^N(s) & = \theta^2K\left[2DN^2\big(\cP_{in}u_{B,i+1}^N(s)+u_{B,i}^N(s)\cP_{out}u_{B,i-1}^N(s)\big)+|F_B|_i^2\big(u^N(s)\big)\right]\\
& \leq c\theta^2KN^2=:h_B^N(s),
\end{align*}
and $\int_{\frac{nT}{N^2}\wedge\tau}^{t\wedge\tau}h_B^N(s)ds\leq \int_{I_n(T)}h_B^N(s)ds \leq c\theta^2KT$ for all $t\in I_n(T)$, since $h_B^N$ is positive. The constant $c$ depends on $c_T$ and $M_F(c_T)$. 
\Cref{BlountExpectationEstimate} then implies $\E\big[\text{exp}\big\{m\big(\frac{(n+1)T}{N^2}\big)\big\}\big] \leq \exp\big(c\theta^2KT\big)$. 
One chooses $\theta$ such that 
\begin{align*}
\P\left\{\sup_{I_n(T)}\tilde{m}_B\left(t,\frac{i}{N}\right)>\epsilon_0\right\} & \leq \text{e}^{-\theta K\epsilon_0}\E\left[\text{exp}\left\{m\left(\frac{(n+1)T}{N^2}\right)\right\}\right]\\
& \leq \text{exp}\big[\theta K(c\theta-\epsilon_0)\big] \hspace{1cm} \leq \hspace{1cm} \text{e}^{-\eta\epsilon_0^2K},
\end{align*} 
applying Doob's inequalities. Here, $\eta=\eta\big(T, c_T,M_F(c_T)\big)>0$ and is going to be generic in the following. 
A similar inequality is easily derived for $-\tilde{m}_B\big(t,\frac{i}{N}\big)$. As a result,
\begin{equation}\label{BoundOfTheIntermediateMartingalePartOnSmallTimeIntervalForY_B}
\P\left\{\sup_{I_n(T)}\Vert \tilde{m}_B(t)\Vert_\infty>\epsilon_0\right\}\leq 2N\text{e}^{-\epsilon_0^2K}.
\end{equation}
From \eqref{BoundInTheSupremumNormOnY_BOf_t_InProbability}, \eqref{IntermediateBoundUniformOnSmallTimeIntervalForY_B} and \eqref{BoundOfTheIntermediateMartingalePartOnSmallTimeIntervalForY_B},
\begin{align*}
\P\left\{\text{e}^{-cT}\sup_{I_n(T)}\Vert Y_B^N(t)\Vert_\infty>\epsilon_0\right\} & \leq \P\left\{\left\Vert Y_B^N\left(\frac{nT}{N^2}\right)\right\Vert_\infty+\sup_{I_n(T)}\Vert \tilde{m}_B(t)\Vert_\infty>\epsilon_0\right\}\\
& \leq \P\left\{\left\Vert Y_B^N\left(\frac{nT}{N^2}\right)\right\Vert_\infty>\frac{\epsilon_0}{2}\right\}+\P\left\{\sup_{I_n(T)}\Vert \tilde{m}_B(t)\Vert_\infty>\frac{\epsilon_0}{2}\right\}\\
& \leq 4N\text{e}^{\eta\epsilon_0^2K}.
\end{align*}
						Hence, 
\[ \P\left\{\text{e}^{-cT}\sup_{[0,T]}\Vert Y_B^N(t)\Vert_\infty>\epsilon_0\right\} \leq \sum_{n=0}^{N^2-1}\P\left\{\text{e}^{-cT}\sup_{I_n(T)}\Vert Y_B^N(t)\Vert_\infty>\epsilon_0\right\} \leq 4N^3\text{e}^{-\alpha\epsilon_0^2K} \] 
and it follows that 
\[ \P\left\{\sup_{[0,T]}\Vert Y_B^N(t)\Vert_\infty>\epsilon_0\right\} \leq 4N^3\text{e}^{-\eta K}. \] 
The r.h.s. vanishes, as we are assuming that $K^{-1}\log N\rightarrow 0$. This proves the convergence of the part related to bacteria, and the proof of \cref{TheLLNForSpatialCholera} ends. $\blacksquare$

\section{Further discussion}

In the LLN given by \cref{TheLLNForSpatialCholera}, we consider that
\[ H=\frac{\bH(0)}{N}\rightarrow\infty,\hspace{0.5cm}\frac{H}{K}\rightarrow \text{non-negative constant},\quad \text{and} \quad K^{-1}\log N\rightarrow0 \] 
as $\bH(0),N,K\rightarrow\infty$. We have seen that, due to $\frac{H}{K}>0$, the condition $K^{-1}\log N\rightarrow0$ is equivalent  to $H^{-1}\log N\rightarrow0$. There are many other interesting possibilities of scaling. 

The next one we consider is when
\[ H=\frac{\bH(0)}{N}\rightarrow\infty,\hspace{0.5cm}\frac{H}{K}\rightarrow 0,\hspace{0.5cm}\text{and} \quad K^{-1}\log N\rightarrow0. \] 

\noindent \textbf{\textit{Another LLN.}} This case is very close to the previous one and a similar LLN holds, with two main differences. 
First of all, the equivalence between $K^{-1}\log N\rightarrow0$ and $H^{-1}\log N\rightarrow0$ is lost, since $H/K\rightarrow0$. Therefore, one needs consider the assumption that both conditions hold, instead of any of them as previously. Furthermore, the limit process that was described by \eqref{RenormalizedDerterministicPDE} changes to:
 
\begin{equation}\label{RenormalizedDerterministicPDEwhenHumanIGrowSlowerThanBacteria}
 \left\{  \begin{array}{lcl}
			   \displaystyle \frac{\p S(t,x)}{\p t} &=&\displaystyle  \mu I(t,x) + (\mu+\rho) R(t,x) - \beta\frac{B(t,x)}{1 + B(t,x)}S(t,x) \vspace{0.2cm} \\
			   \displaystyle \frac{\p I(t,x)}{\p t} &=& \displaystyle \beta\frac{B(t,x)}{1 + B(t,x)}S(t,x)  - (\gamma + \alpha + \mu)I(t,x) \vspace{0.2cm}\\
			   \displaystyle \frac{\p R(t,x)}{\p t} &=& \displaystyle \gamma I(t,x) - (\rho + \mu)R(t,x) \vspace{0.2cm}\\
			   \displaystyle \frac{\p B(t,x)}{\p t} &=& \displaystyle \fD  \frac{\p^2 B(t,x)}{\p x^2} - \nu \frac{\p B(t,x)}{\p x} -\mu_B B(t,x), 
		\end{array}
\right.
\end{equation}

In fact, $H/K\rightarrow0$ means that the human population is negligible besides that of bacteria, and the contribution of the former to the latter ---through infectious--- vanishes at the limit. The interaction between the two populations is one-way (to be compared. The population of vibrios evolves independently from that of human, and influences the evolution of that latter. The result in the present context is the following.

\begin{theorem}\label{LLNwithHumanGrowingSlowerThanBacteria}
Consider a sequence $u^N=\big(u_S^N,u_I^N,u_R^N,u_B^N\big)$ of Markov processes starting at $u^N(0)=\big(u_S^N(0),u_I^N(0),u_R^N(0),u_B^N(0)\big)$, with the infinitesimal generators $\cA^N$ given by \eqref{StochasticSpatialGenerator}. Assume that: \vspace{0.2cm}\par 
(i) $N,H,K\longrightarrow\infty$ in such a way that $H/K\rightarrow 0$, $K^{-1}\log N\rightarrow0$ and $H^{-1}\log N\rightarrow0$.\par 
(ii) Assumptions of \cref{Well-posednessOfCompactRenormalizedDeterministicPDE} hold, so that  \eqref{RenormalizedDerterministicPDEwhenHumanIGrowSlowerThanBacteria} is well-posed, with solution $v=(S,I,R,B)\in C\left(\R_+;\big[C(J)\big]^3\times C^3(J)\right)$. \par  
(iii) $u^N(0)\in \big(H^{-1}\N\big)^3\times K^{-1}\N$ and $\Vert u^N(0)-v(0)\Vert_{\bold C(J)}\longrightarrow0$ in probability.\vspace{0.2cm}\par 
\noindent Then, for all $T>0$, \[ \sup_{t\leq T}\Vert u^N(t)-v(t)\Vert_{\bold C(J)}\longrightarrow0\hspace{0.3cm}\text{in probability}. \]
\end{theorem}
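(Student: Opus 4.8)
The plan is to re-run the proof of \cref{TheLLNForSpatialCholera} almost verbatim, isolating the single place where replacing $H/K\to$ const by $H/K\to 0$ alters the argument. As there, I would introduce the discrete skeleton $v^N=(v_S^N,v_I^N,v_R^N,v_B^N)$ solving \eqref{DiscreteVersionOfPDE}, $\dot v^N=\tilde A_N v^N+F(v^N)$ with $v^N(0)=\tilde P_N v_0$, keeping the \emph{full} reaction field $F$ — in particular the production term $\frac{Hp}{KW}v_I^N$ still appears in the $B$-component of $\dot v^N$ — and then split, by the triangle inequality,
\[
\sup_{t\le T}\Vert u^N(t)-v(t)\Vert_{\bold C(J)}\le \sup_{t\le T}\Vert u^N(t)-v^N(t)\Vert_{\bold C(J)}+\sup_{t\le T}\Vert v^N(t)-v(t)\Vert_{\bold C(J)}.
\]
The point is that $v^N$ is exactly the same object as in the previous proof, so the control of the stochastic discrepancy $u^N-v^N$ is unchanged, whereas the limit $v$ is now the \emph{decoupled} system \eqref{RenormalizedDerterministicPDEwhenHumanIGrowSlowerThanBacteria}, so only the deterministic discrepancy $v^N-v$ has to be re-examined.

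For the stochastic discrepancy I would repeat Step 1 and Step 2 of the proof of \cref{TheLLNForSpatialCholera} word for word: the truncation at the stopping time $\tau$, the reduction of $F$ to a bounded Lipschitz map on $[0,c_T]^4$, the accompanying martingales of \cref{FirstTypeOfMartingales,SecondTypeOfMartingales,ThirdTypeOfMartingales}, and the exponential estimates of \cref{BlountExpectationEstimate,Blount_scalar_product_estimate} that bound $\sup_{t\le T}\Vert Y^N(t)\Vert_{\bold C(J)}$. None of these steps uses $H/K\to$ const: the human components are controlled through $H^{-1}\log N\to 0$ and the bacterial component through $K^{-1}\log N\to 0$. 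The only role the constancy of $H/K$ played before was to deduce one log-condition from the other; since hypothesis (i) now assumes both separately, the identical computation yields $\sup_{t\le T}\Vert u^N(t)-v^N(t)\Vert_{\bold C(J)}\to 0$ in probability with no modification. One should only note that the growth/Lipschitz constant $M$, and hence $c_T$, stays uniformly bounded as $H/K\to 0$ (the coefficient $\frac{Hp}{KW}$ of $F_B$ shrinks), so all constants entering those estimates remain admissible.

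The genuinely new ingredient is an analogue of \cref{DiscreteApproximationOfTheLimit} asserting $\sup_{t\in[0,T]}\Vert v^N(t)-v(t)\Vert_{\bold C(J)}\to 0$ toward the \emph{new} limit \eqref{RenormalizedDerterministicPDEwhenHumanIGrowSlowerThanBacteria}. I would write $v^N$ and $v$ in mild form and subtract. The $S,I,R$ components are treated exactly as in \cref{DiscreteApproximationOfTheLimit}. In the $B$-component, the $B$-reaction of the limit is now $-\mu_B B$ (no production), so relative to \cref{DiscreteApproximationOfTheLimit} there appears one extra term, the vanishing production contribution
\[
\frac{Hp}{KW}\int_0^t T_N(t-s)\,v_I^N(s)\,ds,
\]
whose supremum norm is at most $c_2\,\frac{Hp}{KW}\,c_T\,T$ since $\Vert T_N(t)\Vert\le c_2$ and $\Vert v_I^N(s)\Vert_\infty\le c_T$. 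Because $\frac{Hp}{KW}=\frac{H}{K}\cdot\frac{p}{W}\to 0$, this term vanishes uniformly on $[0,T]$. The remaining semigroup-consistency and discretization errors ($T_N\to T$, $\tilde A_N\to\tilde A$, $\Vert P_N v_0-v_0\Vert_\infty\to 0$) are handled precisely as in the proof of \cref{DiscreteApproximationOfTheLimit}, and a Gronwall--Bellman step closes the estimate, giving the required analogue of \eqref{ConvergenceOfTheDiscreteVersion}.

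The hard part is therefore not probabilistic — that part is inherited unchanged — but lies entirely in the deterministic perturbation step: one must confirm that the uniformly small, spatially homogeneous source $\frac{Hp}{KW}v_I^N$ does not disturb the (delicate) convergence of the discrete advection--diffusion semigroup $T_N$ to $T$. This is clear once one observes that the source enters linearly through the bounded integral operator $\int_0^t T_N(t-s)\,\cdot\,ds$ and is $O(H/K)$ in supremum norm, so it decouples cleanly from the semigroup estimate already established in \cref{DiscreteApproximationOfTheLimit}. Combining the two discrepancies through the displayed triangle inequality then yields $\sup_{t\le T}\Vert u^N(t)-v(t)\Vert_{\bold C(J)}\to 0$ in probability, which completes the proof. $\blacksquare$
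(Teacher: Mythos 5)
Your proposal is correct and follows exactly the route the paper intends: the paper's entire proof of this theorem is the single sentence that it is ``a straightforward adaptation'' of the proof of \cref{TheLLNForSpatialCholera}, and you have carried out precisely that adaptation, correctly identifying that the probabilistic steps go through unchanged (with both log-conditions now assumed separately) and that the only new work is absorbing the $O(H/K)$ production term $\frac{Hp}{KW}\int_0^t T_N(t-s)v_I^N(s)\,ds$ in the deterministic comparison $v^N\to v$.
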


The proof is a straightforward adaptation of that of \cref{TheLLNForSpatialCholera}. $\blacksquare$ \\

\noindent \textbf{\textit{A divergent configuration.}} The last case in the context of infinite local human population ($H\rightarrow\infty$) is when $H/K\rightarrow \infty$. In this case, the human population is too abundant. Their contribution to the population of vibrios is of order of infinity, and the population of bacteria then explodes. As a result, the rate $\lambda_{(B)}=\beta\frac{B}{1+B}$ at which susceptible people become infected is of order of $\beta$ at the limit, and the evolution of human population is independent of that of bacteria. Here also, the interaction are one-way, with human influencing bacteria. The global dynamic depends upon the dominant population essentially. \\

\noindent \textbf{\textit{A hybrid approximation.}} Now, we discuss a second class of scaling, when the local human population is of order of a constant, with $H>0$ naturally. This immediately yields $H/K\rightarrow0$, and we already know that the contribution of human to the population of vibrios will vanish at the limit, as in \eqref{RenormalizedDerterministicPDEwhenHumanIGrowSlowerThanBacteria}. 
However, it is difficult to obtain a limit for human compartments, even formally, in the generator. 

We notice that in all what precedes, no effective spatial behaviour has been considered for human directly. The dependence of human with space holds through that of bacteria. It might be interesting to introduce it. The formalism of \cite{Nankep2018} Chapter 3 appears to be adequate. In fact, the motions of bacteria happen at a spatial microscopic level whereas they happen at a spatial macroscopiec level for human who have much more bigger sizes.  

Accordingly, a natural way to proceed is to consider human spatial dynamics on a macroscopic discretization of the domain, which is fixed, and totally independent of the parameter $N$ of the microscopic discretization that we introduced earlier. Microsites ---or sites or nodes as introduced previously--- are now distinguishable from macrosites. These latter may refer to regions in the domain ---such as communities in the real life---, between which human are transported. On the one hand, each macrosite independently undergoes intraregion interactions (birth, death, infection, recovering) with rates depending on the entire region or macrosite, with jumps given by a function that depends on the region. On the other hand, the regions communicate in a way that has to be specified. One should also keep in mind that the dynamic of human remains coupled to that of bacteria.

From Chapter 3 of \cite{Nankep2018}, we know that a Piecewise determistic Markov Process (PDMP) shall be obtained at the limit in this context, as soon as human jumps are convergent and the rates describing human compartments dynamics are smooth enough\footnote{For instance, jump rates as considered in the present article are sufficiently smooth.}. In that limit, human population follow a pure jump dynamic whose parameters depend upon bacteria population. Human compartments then represent the discrete component of the limiting PDMP, while vibrio population represents its continuous component. Between human compartments consecutive jumps, the vibrio population follows an appropriate reaction-advection-diffusion PDE, parametered by the state taken by the human compartments between the considerated jumps.

\appendix
\section{Appendix}

\subsection{On semigroups and operators in Banach spaces}

We start with some insight into unbounded operators in Banach spaces. Our aim is to introduce and characterize the semigroup generated by some particular classes of operators. For more details and precisions, we refer to \cite{Cazenave1998} or \cite{Henry1981} among others. Let $X$ be a Banach space endowed with the norm $\Vert\cdot\Vert_X$. We consider real Banach spaces by default, and consider their complexification when the context requires a complex field (e.g. for spectral theory).

\begin{definition}(Linear unbounded operator) \normalfont\par 
\noindent A linear unbounded operator in $X$ is a pair $(D,L)$, where $D$ is a linear subspace of $X$ and $L$ is a linear mapping $D\rightarrow X$. 

The (unbounded) operator $L$ can be either "bounded", if there exists $c>0$ such that \[ \Vert Lu\Vert_X\leq c,\hspace{0.3cm}\forall u\in\{x\in D,\Vert x\Vert_X\leq 1\}, \] or "not bounded" otherwise.

The graph $G(L)$ and the range $\cR(L)$ of $L$ are the linear subspaces of $X\times X$ and $X$ respectively, defined by 
\[ \begin{array}{l}
\disp G(L):=\left\{(u,f)\in X\times X: u\in D\text{ and } f=Lu\right\},\vspace{0.1cm}\\
\disp \cR(L):=L(D).
\end{array} \]
If $G(L)$ is a closed subspace of $X\times X$, then $L$ is said to be closed. 

We often denote by $L$ the operator, and by $D=D(L)$ its domain. However, when one defines an operator it is necessary to define its domain. If this latter is dense in $X$ ($\overline{D(L)}=X$), the operator is said to be densely defined.

As it will turn out, having a closed graph and a dense domain confers nice properties to (unbounded) operators.

\end{definition}

\begin{definition}\textbf{($m-$dissipativity)}\normalfont\par 

\noindent Consider an (unbounded) operator $L$ in $X$, and the following conditions:  \vspace{0.1cm}\par 
\noindent\hspace{0.2cm} \textbf{(mD1)} $\disp \Vert u-\lambda Lu\Vert_X\geq\Vert u\Vert_X$, for all $u\in D(L)$ and all $\lambda>0$ (dissipativity).\par 
\noindent\hspace{0.2cm} \textbf{(mD2)} For all $\lambda>0$ and all $f\in X$, there exists $u\in D(L)$ such that $u-\lambda Lu=f$.\vspace{0.1cm}\par 

$\bullet$ The operator $L$ is dissipative if it satisfies the first condition (mD1).\par 
$\bullet$ The operator $L$ is $m-$dissipative if it satisfies both conditions (mD1) and (mD2).
\end{definition}

It can be showed (see e.g. Proposition 2.2.6, p. 19 of \cite{Cazenave1998}) that if the condition (mD1) holds, it is sufficient to find some $\lambda_0>0$ such that for all $f\in X$, a solution of $u-\lambda_0 Lu=f$ exists, in order for the operator $L$ to be $m-$dissipative.\par 

\medskip
Next, we introduce some tools and results related to $m-$dissipative operators. Detailed proves can be found, e.g., in the Chapter 2 of \cite{Cazenave1998}.

\begin{proposition}\label{some_properties_of_m-dissipative_operators}\normalfont
\noindent Let $L$ be an $m-$dissipative operator in $X$:\vspace{0.2cm}\par 

\textbf{(i)} For all $f\in X$ and all $\lambda>0$, there exists a unique solution  to the equation $u-\lambda Lu=f$, that we denote by \[ J_\lambda f\equiv(I_d-\lambda L)^{-1}, \] where $I_d$ is the identity operator on $X$. For $\lambda>0$, we also introduce the operator \[ L_\lambda=LJ_\lambda=\frac{J_\lambda-I_d}{\lambda}. \] Therefore, $J_\lambda, L_\lambda\in \cL(X)$, and in addition, $\Vert J_\lambda\Vert\leq 1$ and $\Vert L_\lambda\Vert\leq 2/\lambda$. \vspace{0.2cm}\par 

\textbf{(ii)} The operator $L$ is closed. For every $u\in D(L)$, the graph norm of $u$ is given by \[ \Vert u\Vert_{D(L)}:=\Vert u\Vert_X+\Vert Lu\Vert_X. \] Then $\disp\big(D(L),\Vert\cdot\Vert_{D(L)}\big)$ is a Banach space, and $L\in \cL(D(L),X)$. Moreover, \[ \lim_{\lambda\downarrow0}\Vert J_\lambda u-u\Vert_X=0\hspace{0.3cm} \text{for all}\hspace{0.2cm} u\in\overline{D(L)}. \] Furthermore, if $L$ is densely defined, then \[ \lim_{\lambda\downarrow0}\Vert L_\lambda u-Lu\Vert_X=0\hspace{0.3cm} \text{for all}\hspace{0.2cm} u\in D(L). \] 
\end{proposition}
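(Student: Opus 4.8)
The plan is to derive everything from the two defining conditions \textbf{(mD1)}--\textbf{(mD2)} together with elementary functional analysis, establishing \textbf{(i)} first and then bootstrapping \textbf{(ii)} from it. For \textbf{(i)}, I would first settle well-posedness of $u-\lambda Lu=f$. Uniqueness is immediate from the dissipativity estimate \textbf{(mD1)}: if two solutions share the same right-hand side, their difference $w$ satisfies $w-\lambda Lw=0$, whence $\Vert w\Vert_X\le\Vert w-\lambda Lw\Vert_X=0$. Existence is exactly \textbf{(mD2)}. This defines $J_\lambda:X\to D(L)$, and applying \textbf{(mD1)} to $u=J_\lambda f$ gives $\Vert J_\lambda f\Vert_X\le\Vert f-\lambda Lf\Vert_X\text{-type bound}=\Vert f\Vert_X$, so $J_\lambda\in\cL(X)$ with $\Vert J_\lambda\Vert\le1$. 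The identity $L_\lambda=LJ_\lambda=(J_\lambda-I_d)/\lambda$ then follows by rewriting the defining equation $J_\lambda f-\lambda LJ_\lambda f=f$ as $LJ_\lambda f=(J_\lambda f-f)/\lambda$; the bound $\Vert L_\lambda\Vert\le2/\lambda$ and the membership $L_\lambda\in\cL(X)$ drop out of the triangle inequality.

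For the first part of \textbf{(ii)}, closedness comes from the continuity of $J_\lambda$. Given $u_n\in D(L)$ with $u_n\to u$ and $Lu_n\to f$, I would set $g_n=u_n-\lambda Lu_n$, so that $g_n\to u-\lambda f=:g$; since $u_n=J_\lambda g_n$ and $J_\lambda$ is continuous, $u_n\to J_\lambda g$, forcing $u=J_\lambda g\in D(L)$ and $Lu=f$, i.e. $G(L)$ is closed. Completeness of $\big(D(L),\Vert\cdot\Vert_{D(L)}\big)$ is then the standard observation that $u\mapsto(u,Lu)$ is an isometry of $D(L)$ onto the closed --- hence complete --- subspace $G(L)$ of $X\times X$, and $L\in\cL(D(L),X)$ because $\Vert Lu\Vert_X\le\Vert u\Vert_X+\Vert Lu\Vert_X=\Vert u\Vert_{D(L)}$.

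For the two convergence statements I would exploit a telescoping identity. Since $J_\lambda(u-\lambda Lu)=u$ for $u\in D(L)$ and $J_\lambda$ is linear and bounded, subtracting yields $J_\lambda u-u=\lambda J_\lambda Lu$, so $\Vert J_\lambda u-u\Vert_X\le\lambda\Vert Lu\Vert_X\to0$. For general $u\in\overline{D(L)}$ I would pass to the closure by a three-term approximation argument, the uniform bound $\Vert J_\lambda\Vert\le1$ being exactly what keeps the estimate robust. Finally, applying the bounded operator $J_\lambda$ to $u-\lambda Lu$ for $u\in D(L)$ gives $u=J_\lambda u-\lambda J_\lambda Lu$, i.e. the commutation $J_\lambda Lu=L_\lambda u=LJ_\lambda u$; hence $L_\lambda u-Lu=J_\lambda(Lu)-Lu$, which tends to $0$ by the previous limit applied to $w=Lu$. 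Here density enters decisively: $\overline{D(L)}=X$ is what guarantees $w=Lu\in\overline{D(L)}$, so that the convergence $J_\lambda w-w\to0$ is available.

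I do not expect any single computation to be a serious obstacle; the real care lies in ordering the dependencies correctly --- the estimate $\Vert J_\lambda\Vert\le1$ must precede closedness, the telescoping identity must precede both limits, and the commutation $J_\lambda L=LJ_\lambda$ on $D(L)$ must be in place before the last limit --- and in invoking density at precisely the one point where it is genuinely needed.
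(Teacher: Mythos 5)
Your proposal is correct. The paper does not prove this proposition itself but defers to Chapter 2 of \cite{Cazenave1998}, and your argument is precisely that standard one --- uniqueness and the contraction bound from \textbf{(mD1)}, existence from \textbf{(mD2)}, closedness via continuity of $J_\lambda$, the telescoping identity $J_\lambda u-u=\lambda J_\lambda Lu$, and density invoked exactly where $Lu\in\overline{D(L)}$ is needed --- with the dependencies ordered correctly (the only blemish is the garbled display ``$\Vert J_\lambda f\Vert_X\le\Vert f-\lambda Lf\Vert_X$'', which should read $\Vert J_\lambda f\Vert_X\le\Vert J_\lambda f-\lambda LJ_\lambda f\Vert_X=\Vert f\Vert_X$, as your surrounding text makes clear you intend).
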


\medskip
Let us switch to the notion of semigroup. 
\begin{definition}\label{semigroup_definition}\textbf{\texttt{(semigroup: contraction and strong continuity)}}\normalfont\\
A one-parameter family $\{S(t)\}\equiv\{S(t),t\geq0\}$ of linear operators on $X$ is called a semigroup on $X$ if 
\[
\disp \left\{
\begin{array}{l}
S(t)\in\cL(X),\vspace{0.1cm}\\
S(0)=I_d,\vspace{0.1cm}\\
S(t+s)=S(t)S(s),\hspace{0.2cm} \forall s,t\geq0.
\end{array} 
\right.
\]
A semigroup $\{S(t)\}$ on $X$ is said to be a contraction semigroup if \[ \Vert S(t)\Vert\leq 1\hspace{0.2cm} \forall t\geq0. \] It is said to be strongly continuous if \[ \lim_{t\rightarrow0}\Vert S(t)u-u\Vert_X\rightarrow0\hspace{0.2cm}\forall u\in X, \] which means that for all $u\in X$, $t\mapsto S(t)u$ belongs to $C(\R_+,X)$.\par 
\end{definition}
We have followed \cite{Kurtz1986} in the definition above. We remark that terminologies may vary with authors. However, the essential ideas remain the same. For example, in \cite{Cazenave1998} (see Definition 3.4.1 p.39), the authors consider "strong continuity" as an intrinsic property of a semigroup and do not emphasize on that point when denoting the semigroup.\par 

Associated with semigroups are different operators. We present some of them throughout the present paper. Let us get started with those generated by $m-$dissipative operators with dense domain. Let $L$ be such an operator. For $\lambda>0$, we consider the operators $J_\lambda$ and $L_\lambda$ introduced in $\cref{some_properties_of_m-dissipative_operators}$ (i). Then we set \[ S_\lambda(t)=e^{tL_\lambda}, \] and fix $T>0$. The following holds.

\begin{proposition}\label{semigroup_generated_by_an_m-dissipative_operator}\textbf{\texttt{(Theorem 3.1.1, p.33, Chapter 3 of \cite{Cazenave1998})}}\\
For all $w\in X$, the sequence $u_\lambda(t)=S_\lambda(t)w$ converges uniformly on bounded intervals of $[0,T]$, to a function $u\in C(\R_+,X)$, as $\lambda\downarrow0$. We set \[ S(t):=e^{tL}\hspace{0.4cm}\text{and}\hspace{0.4cm}S(t)w=u(t),\hspace{0.5cm}\forall w\in X, t\geq0. \] Then $\{S(t)\}$ defines a (one-parameter) semigroup of contraction on $X$.\par 
In addition, for all $w\in D(L)$, $u(t)=S(t)w$ is the unique solution of the problem 
\[ \left\{
\begin{array}{l}
u\in C(\R_+,D(A))\cap C^1(\R_+,X),\vspace{0.1cm}\\
u'(t)=Lu(t),\hspace{0.2cm}\forall t\geq0,\vspace{0.1cm}\\
u(0)=w.
\end{array} 
\right.
\]
Finally, the semigroup $\{S(t)\}$ commutes with $L$ in the sense \[ S(t)Lw=LS(t)w \] for all $w\in D(L)$ and $t\geq0$.
\end{proposition}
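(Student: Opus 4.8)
The plan is to follow the classical Yosida approximation scheme, exploiting the bounds recorded in \cref{some_properties_of_m-dissipative_operators}. First I would observe that, since $L_\lambda\in\cL(X)$, the exponential $S_\lambda(t)=e^{tL_\lambda}$ is defined by its norm-convergent series and is a contraction: writing $L_\lambda=\lambda^{-1}(J_\lambda-I_d)$ gives $S_\lambda(t)=e^{-t/\lambda}e^{(t/\lambda)J_\lambda}$, whence $\Vert S_\lambda(t)\Vert\leq e^{-t/\lambda}\sum_{n\geq0}\frac{(t/\lambda)^n}{n!}\Vert J_\lambda\Vert^n\leq 1$ because $\Vert J_\lambda\Vert\leq1$. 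I would also record that all the operators $J_\lambda$, $L_\lambda$, $S_\lambda(t)$ (for varying $\lambda$ and $t$) commute, being functions of the mutually commuting resolvents.

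Next I would prove the convergence of $u_\lambda(t)=S_\lambda(t)w$. For $w\in D(L)$ I would use the telescoping identity
\[ u_\lambda(t)-u_\mu(t)=\int_0^t\frac{d}{ds}\big[S_\mu(t-s)S_\lambda(s)w\big]\,ds=\int_0^tS_\mu(t-s)S_\lambda(s)\big(L_\lambda-L_\mu\big)w\,ds, \]
where the second equality rests on the commutation above. The contraction bound then yields $\Vert u_\lambda(t)-u_\mu(t)\Vert\leq t\,\Vert L_\lambda w-L_\mu w\Vert$, which tends to $0$ uniformly on $[0,T]$ since $L_\lambda w\to Lw$ by \cref{some_properties_of_m-dissipative_operators}. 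For general $w\in X$ I would combine the density of $D(L)$ with $\Vert S_\lambda(t)\Vert\leq1$ to pass to the limit. This furnishes a limit $u\in C(\R_+,X)$ and defines $S(t)w:=u(t)$; passing to the limit in $\Vert S_\lambda(t)\Vert\leq1$, in $S_\lambda(0)=I_d$, and in $S_\lambda(t+s)=S_\lambda(t)S_\lambda(s)$ (the last via a two-term splitting) shows that $\{S(t)\}$ is a contraction semigroup.

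I would then treat the Cauchy problem for $w\in D(L)$. From $u_\lambda'(t)=S_\lambda(t)L_\lambda w$ and the estimate $\Vert S_\lambda(t)L_\lambda w-S(t)Lw\Vert\leq\Vert L_\lambda w-Lw\Vert+\Vert(S_\lambda(t)-S(t))Lw\Vert$, the derivatives converge uniformly on bounded intervals to $S(t)Lw$; hence $u\in C^1(\R_+,X)$ with $u'(t)=S(t)Lw$. To identify $S(t)Lw$ with $Lu(t)$ I would invoke the closedness of $L$: setting $v_\lambda(t)=J_\lambda u_\lambda(t)$, one has $v_\lambda(t)\to u(t)$ while $Lv_\lambda(t)=L_\lambda u_\lambda(t)\to S(t)Lw$, so $u(t)\in D(L)$ and $Lu(t)=S(t)Lw=LS(t)w$. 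This simultaneously yields $u'(t)=Lu(t)$, the continuity $u\in C(\R_+,D(L))$ through the graph norm, and the commutation relation $S(t)Lw=LS(t)w$.

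Finally, for uniqueness I would let $w$ be the difference of two solutions, so that $w'=Lw$, $w(0)=0$, and $w(s)\in D(L)$. Fixing $t>0$ and setting $g(s)=S(t-s)w(s)$, a product-rule computation gives $g'(s)=-S(t-s)Lw(s)+S(t-s)w'(s)=0$, whence $w(t)=g(t)=g(0)=0$. The hard part will be justifying this differentiation rigorously: since $\{S(t)\}$ is only strongly continuous, I must split the difference quotient of $g$ into one term controlled by strong continuity and the contraction bound, and another identified as $-\frac{d}{d\sigma}S(\sigma)w(s)\big|_{\sigma=t-s}=-S(t-s)Lw(s)$ using the differentiability on $D(L)$ secured in the previous step. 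Managing this interchange of limits is the only genuinely delicate point.
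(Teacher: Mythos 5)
Your proposal is correct, but note that the paper itself offers no proof of this proposition: it is quoted verbatim (with citation) as Theorem 3.1.1 of \cite{Cazenave1998}, so the only "proof" in the paper is that reference. Your argument is precisely the classical Yosida-approximation proof given there — contraction bound for $S_\lambda(t)=e^{-t/\lambda}e^{(t/\lambda)J_\lambda}$, Cauchy estimate $\Vert u_\lambda(t)-u_\mu(t)\Vert\leq t\Vert L_\lambda w-L_\mu w\Vert$ via commutation, identification of the limit through the closedness of $L$, and uniqueness via $g(s)=S(t-s)w(s)$ — so there is nothing to correct or to contrast.
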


Next, there is:
\begin{definition}\label{infinitesimal_generator_of_a_contraction_semigroup}\textbf{\texttt{(The infinitesimal generator)}}\normalfont\\
The (infinitesimal) generator of a semigroup $\{S(t)\}$ is the linear operator $\cL$ on $X$ defined by \[ D(\cL):=\left\{u\in X:\frac{S(h)u-u}{h} \text{ has a limit in } X \text{ as } h\downarrow0\right\}, \] and \[ \cL u:=\lim_{h\downarrow0}\frac{S(h)u-u}{h}\hspace{0.2cm}\forall u\in D(\cL). \]
It is well known that if $\{S(t)\}$ is a strongly continuous semigroup of contraction, then its (infinitesimal) generator is $m-$dissipative and densely defined (see e.g. Proposition 3.4.3., p. 39, of \cite{Cazenave1998}).
\end{definition}

With $\cref{semigroup_generated_by_an_m-dissipative_operator}$ in mind, we remark that an $m-$dissipative operator needs not be the (infinitesimal) generator of its associated semigroup. However, for a particular class of semigroups, that identification is certain. This is made precise by the so-called Hille-Yosida-Phillips theorem.

\begin{proposition}\label{the_Hille_Yosida_Phillips_theorem}\textbf{\texttt{(The Hille-Yosida-Phillips Theorem)}}\normalfont\\
A linear operator $L$ is the (infinitesimal) generator of a strongly continuous semigroup of contraction in $X$ if and only if $L$ is $m-$dissipative and densely defined.
\end{proposition}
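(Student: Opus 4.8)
The plan is to prove the two implications separately, leaning almost entirely on the machinery already assembled above. For the ``only if'' direction, suppose $L$ is the infinitesimal generator of a strongly continuous contraction semigroup $\{S(t)\}$ in the sense of \cref{infinitesimal_generator_of_a_contraction_semigroup}. Then there is essentially nothing to prove: the remark recorded at the end of \cref{infinitesimal_generator_of_a_contraction_semigroup} already asserts that the generator of any such semigroup is $m$-dissipative and densely defined, so I would simply invoke it.

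For the ``if'' direction, assume $L$ is $m$-dissipative and densely defined. First I would feed $L$ into \cref{semigroup_generated_by_an_m-dissipative_operator}, which manufactures a one-parameter contraction semigroup $\{S(t)\}=\{e^{tL}\}$ as the limit $S(t)w=\lim_{\lambda\downarrow0}e^{tL_\lambda}w$ of the Yosida approximations $L_\lambda$ from \cref{some_properties_of_m-dissipative_operators}. Strong continuity comes for free: for each $w\in X$ the limit function $t\mapsto S(t)w$ lies in $C(\R_+,X)$ with $S(0)w=w$, whence $\Vert S(t)w-w\Vert_X\to0$ as $t\downarrow0$. The substantive task is then to verify that $L$ is \emph{exactly} the infinitesimal generator $\cL$ of this semigroup --- recall the warning preceding the statement that an $m$-dissipative operator need not coincide with the generator of its associated semigroup.

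To establish $L=\cL$ I would prove two inclusions. For $L\subseteq\cL$, I fix $w\in D(L)$; by the Cauchy-problem part of \cref{semigroup_generated_by_an_m-dissipative_operator}, $u(t)=S(t)w$ satisfies $u\in C^1(\R_+,X)$ with $u'(t)=Lu(t)$ and $u(0)=w$, so differentiating at $t=0$ gives $\lim_{h\downarrow0}h^{-1}(S(h)w-w)=Lw$, whence $w\in D(\cL)$ and $\cL w=Lw$. For the reverse inclusion $D(\cL)\subseteq D(L)$, I would run a resolvent/uniqueness argument at $\lambda=1$. Both $L$ and $\cL$ are $m$-dissipative --- the latter by the ``only if'' direction applied to $\{S(t)\}$ --- so $I_d-L$ is a bijection $D(L)\to X$ by (mD1)--(mD2), while $I_d-\cL$ is injective by (mD1). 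Given $u\in D(\cL)$, set $f=(I_d-\cL)u$ and choose $v\in D(L)$ with $(I_d-L)v=f$; since $L\subseteq\cL$ we get $(I_d-\cL)v=(I_d-L)v=f=(I_d-\cL)u$, and injectivity of $I_d-\cL$ forces $u=v\in D(L)$. This gives $\cL=L$ and closes the argument.

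I expect the only genuine obstacle to be this last injectivity/surjectivity bookkeeping --- showing that the semigroup produced by the Yosida scheme has $L$ itself, rather than some proper extension, as its generator. Everything else reduces to a direct citation of the preparatory results \cref{some_properties_of_m-dissipative_operators} and \cref{semigroup_generated_by_an_m-dissipative_operator}.
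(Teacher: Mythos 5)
Your argument is correct. Note, however, that the paper itself offers no proof of this statement: it is quoted as a classical background result (the Hille--Yosida--Phillips theorem) with a pointer to \cite{Cazenave1998}, so there is no internal argument to compare yours against. What you have written is the standard textbook proof, correctly assembled from the preparatory material: the ``only if'' direction is exactly the remark closing \cref{infinitesimal_generator_of_a_contraction_semigroup}; the ``if'' direction builds the contraction semigroup via the Yosida approximations of \cref{some_properties_of_m-dissipative_operators} and \cref{semigroup_generated_by_an_m-dissipative_operator}, gets $L\subseteq\cL$ by differentiating the solution of the Cauchy problem at $t=0$, and closes the gap with the resolvent argument at $\lambda=1$ (surjectivity of $I_d-L$ from (mD2), injectivity of $I_d-\cL$ from dissipativity of the generator). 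That last step is indeed the only point of substance, and you handle it correctly; it is precisely what the warning before the statement is about. The one thing worth flagging is that your proof still leans on an uncited external fact at the ``only if'' step (Proposition 3.4.3 of \cite{Cazenave1998}), so the argument is self-contained only modulo that citation --- which is the same level of rigour the paper itself adopts throughout this appendix.
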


\subsection{On the properties of the operator $A=\fD\Delta + \nu\nabla$}\label{PropertiesOfTheOperator_A}

In this section, we aim to prove that the operator $A$ defined on $C(J)$ is $m$-dissipative and densely defined 
We entirely rely on the approach used in \cite{Cazenave1998} which is in three steps. At first, we prove that the operator is $m$-dissipative with dense domaine in $L^2(J)$. Then, we deduce $m$-dissipativity in $L^\infty$. In that case, the domain is not dense. Finally, we conclude in $C$ framework. Recall that we are considering $1$-periodic functions, and there is no deal with boundaries, since there is no boundary. \\

\noindent \underline{\textbf{\textit{Step 1 : $L^2$-theory.}}} Consider the operator $A_2$ on $L^2(J)$, by:
\[
\left\{
\begin{array}{l}
\cD(A_2) = \big\{ u\in H^1(J): \Delta u\in L^2(J) \big\}\vspace{0.2cm}\\
A_2u = \fD\Delta u - \nu\nabla u, \quad \forall u\in \cD(A_2),
\end{array}
\right.
\]
where $\fD$, $\nu>0$. 
We prove the following:

\begin{lemma}\label{PropertiesOf_A_onL2}
The operator $A_2$ is  m-dissipative and densely defined.
\end{lemma}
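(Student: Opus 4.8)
The plan is to verify the three requirements separately---density of $\cD(A_2)$, the dissipativity condition (mD1), and the range condition (mD2)---exploiting throughout that $1$-periodicity renders the first-order advection term skew-symmetric and hence harmless. First I would dispatch density: since $C^\infty(J)$ (smooth $1$-periodic functions) is contained in $\cD(A_2)$ and is dense in $L^2(J)$, the domain is dense.

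Next, for dissipativity I would use the Hilbert-space characterization, namely that (mD1) is equivalent to $\langle A_2 u, u\rangle_2 \leq 0$ on $\cD(A_2)$. This equivalence follows by expanding $\Vert u - \lambda A_2 u\Vert_2^2 = \Vert u\Vert_2^2 - 2\lambda\langle A_2 u, u\rangle_2 + \lambda^2\Vert A_2 u\Vert_2^2$, which is $\geq \Vert u\Vert_2^2$ as soon as $\langle A_2 u, u\rangle_2 \leq 0$. Integration by parts with boundary terms vanishing by periodicity gives $\langle \Delta u, u\rangle_2 = -\Vert\nabla u\Vert_2^2$ and, crucially, $\langle \nabla u, u\rangle_2 = \tfrac12\int_J (u^2)'\,dx = 0$, so that $\langle A_2 u, u\rangle_2 = -\fD\Vert\nabla u\Vert_2^2 \leq 0$.

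For (mD2), by the remark following the definition of $m$-dissipativity it suffices to solve $u - \lambda A_2 u = f$ for a single $\lambda>0$ and every $f \in L^2(J)$. I would set this up weakly and invoke Lax--Milgram: testing the equation against $v \in H^1(J)$ and integrating by parts leads to the bilinear form $B(u,v) = \langle u,v\rangle_2 + \lambda\fD\langle\nabla u,\nabla v\rangle_2 + \lambda\nu\langle\nabla u,v\rangle_2$ on the periodic space $H^1(J)$. Boundedness is immediate, and coercivity holds because the advection term again drops out on the diagonal: $\langle\nabla u,u\rangle_2 = 0$ forces $B(u,u) = \Vert u\Vert_2^2 + \lambda\fD\Vert\nabla u\Vert_2^2 \geq \min(1,\lambda\fD)\,\Vert u\Vert_{H^1(J)}^2$. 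Lax--Milgram then yields a unique weak solution $u \in H^1(J)$ of $B(u,\cdot) = \langle f,\cdot\rangle_2$, and a short regularity bootstrap finishes the job: rewriting the weak identity as $\langle\nabla u,\nabla v\rangle_2 = \langle g,v\rangle_2$ with $g := (\lambda\fD)^{-1}\big(f - u - \lambda\nu\nabla u\big) \in L^2(J)$ shows that the distributional Laplacian of $u$ lies in $L^2(J)$, hence $u \in \cD(A_2)$ and $u - \lambda A_2 u = f$ holds strongly.

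The step I expect to carry the real content is coercivity of $B$ in (mD2): for a generic advection--diffusion operator the first-order term destroys coercivity, and one is forced to shift by a multiple of the identity or restrict to small $\lambda$. Here periodicity makes $\langle\nabla u,u\rangle_2$ vanish identically, so coercivity---and with it solvability---holds for every $\lambda>0$ with no shift needed. Everything else (density, the sign computation for dissipativity, and the elliptic regularity identifying the weak solution as an element of the domain) is then routine.
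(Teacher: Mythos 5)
Your proposal is correct and follows essentially the same route as the paper: density via $C^\infty(J)\subset\cD(A_2)$, dissipativity via the Hilbert-space criterion $\langle A_2u,u\rangle_2\leq 0$ using integration by parts and the skew-symmetry of $\nabla$ under periodicity, and the range condition via Lax--Milgram with a bilinear form whose advection term vanishes on the diagonal. The only cosmetic differences are that the paper fixes $\lambda=1$ where you keep a generic $\lambda$, and that you spell out the elliptic-regularity step identifying the weak solution as an element of $\cD(A_2)$, which the paper passes over more quickly.
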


\begin{proof}

Since $C^\infty(J)$ is dense in $L^2(J)$ and $C^\infty(J)\subset \cD(A_2)$, it follows that $\cD(A_2)$ is dense in $L^2(J)$.
It remains to prove that $m$-dissipativity.\par  

\medskip
It is not difficult to see that $\langle A_2u,u\rangle\leq 0$ for all $u\in \cD(A_2)$. In fact, observing that $\langle v,\Delta u\rangle=-\langle\nabla v,\nabla u\rangle$ for all $u\in \cD(A)$ and $v\in H^1(J)$ (see Lemma 2.6.2 of \cite{Cazenave1998}), and that $\nabla$ is skew-adjoint on $L^2(J)$,
one easily obtains that $A_2$ is negative definite by taking $v=u$. Therefore $A_2$ is dissipative, by Proposition 2.4.2 of \cite{Cazenave1998}. We will conclude using Lax-Milgram theorem. 

\medskip
Consider the coercive continuous bilinear form $b$ on $H^1(J)$ 
defined by 
\[ b(u,v)=\langle u,v\rangle+\fD\langle \nabla u,\nabla v\rangle+\nu\langle u,\nabla v\rangle. \]
Bilinearity is evident. Let $u,v\in H^1(J)$. By Schwartz inequality,
\begin{align*}
|b(u,v)| & \leq \Vert u\Vert_2\Vert v\Vert_2+\fD\Vert\nabla u\Vert_2\Vert\nabla v\Vert_2+\nu\Vert u\Vert_2\Vert\nabla v\Vert_2\vspace{0.2cm}\\
& \leq 3\max(1,\fD,\nu)\Vert u\Vert_{H^1(J)}\Vert v\Vert_{H^1(J)},
\end{align*}
and continuity follows. Furthermore, 
\[ b(u,u) \quad = \quad \Vert u\Vert_2^2+\fD\Vert\nabla u\Vert_2^2 \quad \geq \quad \min(1,\fD)\Vert u\Vert_{H^1(J)} \]
yields coerciveness. Now, let $f\in L^2(J)\subset H^{-1}(J)$. There exists a unique $u\in H^1(J)$ such that $b(u,v)=\langle f,v\rangle_{H^1(J)}$ for all $v\in H^1(J)$, thanks to Lax-Milgram theorem. 
From Proposition 8.14, Chapter 8 of \cite{Brezis2011}, since $J=[0,1]$ is bounded, there exists a ---non necessarily unique--- $f_1\in L^2(J)$ of $f$, such that $\langle f,v\rangle_{H^1(J)}=\langle f_1,v\rangle$. This allows one to identify $f$, viewed as an element of the dual space $H^{-1}(J)$ of $H^1(J)$, with the distribution $-f_1^{'}$\footnote{The distribution $-f_1^{'}$ is the linear functional on $C^\infty(J)$ defined by $v\mapsto \langle f_1,\nabla v\rangle$.} (see Remark 20, Chapter 8 \cite{Brezis2011}). In the following, we use that identification and denote it by $-f$. \par 

\medskip
We have proved that for all $f\in L^2(J)$, there exists a unique $u\in H^1(J)$ such that \[ \langle u,v\rangle-\fD\langle u,\Delta v\rangle+\nu\langle u,\nabla v\rangle=\langle f,v\rangle,\quad \text{for all}\quad v\in H^1(J). \]
Thus \[ u-(\fD \Delta u-\nu\nabla u)=f \] in the sense of distributions. Since $u\in H^1(J)$ in addition, we obtain $u\in \cD(A_2)$ and $u-A_2u=f$. Therefore, $A_2$ is $m$-dissipative.

\end{proof}

\noindent \underline{\textbf{\textit{Step 2 : $L^\infty$-theory.}}} Consider the operator $A_2$ on $L^2(J)$, by:
\[
\left\{
\begin{array}{l}
\cD(A_\infty) = \big\{ u\in H^1(J)\cap L^\infty(J): \nabla u\in L^\infty(J) \text{ and } \Delta u\in L^\infty(J) \big\}\vspace{0.2cm}\\
A_\infty u = \fD\Delta u - \nu\nabla u, \quad \forall u\in \cD(A_\infty),
\end{array}
\right.
\]
where $\fD$, $\nu>0$. Then comes the next result.

\begin{lemma}\label{PropertiesOf_A_onLinfty}
The operator $A_\infty$ is m-dissipative in $L^\infty(J)$.
\end{lemma}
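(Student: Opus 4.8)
The plan is to verify the two defining conditions of $m$-dissipativity, \textbf{(mD1)} and \textbf{(mD2)}, by reducing both to a single $L^\infty$ a priori estimate for the resolvent already constructed in \cref{PropertiesOf_A_onL2}. Precisely, I claim that for every $\lambda>0$ and every $f\in L^\infty(J)$, the unique $u\in\cD(A_2)$ solving $u-\lambda A_2u=f$ (which exists because $A_2$ is $m$-dissipative) satisfies $\Vert u\Vert_\infty\leq\Vert f\Vert_\infty$. Granting this, \textbf{(mD2)} will follow once we check that such $u$ actually lies in $\cD(A_\infty)$, and \textbf{(mD1)} will follow by taking an arbitrary $u\in\cD(A_\infty)\subset\cD(A_2)$, setting $f:=u-\lambda A_\infty u=u-\lambda A_2u\in L^\infty(J)$, and reading the estimate as $\Vert u\Vert_\infty\leq\Vert u-\lambda A_\infty u\Vert_\infty$. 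Since the bound is uniform in $\lambda$, there is no need to invoke the one-$\lambda_0$ reduction mentioned after the definition of $m$-dissipativity.

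The heart of the argument is this a priori bound, which I would obtain by a Stampacchia truncation. Writing $k=\Vert f\Vert_\infty$ and testing the weak formulation of $u-\lambda A_2u=f$ against the admissible test function $v=(u-k)^+\in H^1(J)$, the three resulting terms behave as follows: the diffusion term produces $\lambda\fD\Vert\nabla(u-k)^+\Vert_2^2\geq0$ and may be discarded; the zero-order term together with the right-hand side give $\Vert(u-k)^+\Vert_2^2\leq\langle f-k,(u-k)^+\rangle\leq 0$, since $f\leq k$ and $(u-k)^+\geq 0$; and the transport term vanishes outright, because
\[ \langle\nabla u,(u-k)^+\rangle=\tfrac12\int_J\nabla\big[((u-k)^+)^2\big]\,dx=0 \]
by $1$-periodicity (equivalently, by the skew-adjointness of $\nabla$ on $L^2(J)$ already used in \cref{PropertiesOf_A_onL2}). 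This forces $(u-k)^+=0$ a.e., i.e.\ $u\leq\Vert f\Vert_\infty$; applying the same reasoning to $-u$ and $-f$ gives the lower bound, whence $\Vert u\Vert_\infty\leq\Vert f\Vert_\infty$.

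It remains to promote the $L^2$-solution to an element of $\cD(A_\infty)$, which in the one-dimensional periodic setting is a routine bootstrap. From $u\in H^1(J)$ and the identity $\lambda\fD\,u''=u+\lambda\nu\,u'-f$ with right-hand side in $L^2(J)$ one gets $u\in H^2(J)\hookrightarrow C^1(J)$, so that $\nabla u\in L^\infty(J)$; re-reading the same identity then yields $\Delta u=u''\in L^\infty(J)$ because now $u,u',f\in L^\infty(J)$. Thus $u\in\cD(A_\infty)$ and $u-\lambda A_\infty u=f$, which completes \textbf{(mD2)}; together with the first paragraph this establishes that $A_\infty$ is $m$-dissipative. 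As announced in the text, $\cD(A_\infty)$ is not dense in $L^\infty(J)$, so only $m$-dissipativity — and not a dense domain — is asserted here.

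The main obstacle, and the only genuinely non-routine point, is the control of the first-order advection term in the $L^\infty$ estimate: unlike the self-adjoint diffusion part it carries no sign, and a naive maximum-principle evaluation is blocked by the fact that $\Delta u$ is, at that stage, only known to belong to $L^2(J)$. The truncation test function circumvents this difficulty, and it is precisely the periodic boundary condition that makes the transport contribution integrate to zero; on a domain with genuine boundary one would instead have to retain boundary terms and argue more delicately.
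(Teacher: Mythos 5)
The paper states \cref{PropertiesOf_A_onLinfty} without any proof: after announcing the three-step scheme of \cite{Cazenave1998} ($L^2$, then $L^\infty$, then $C$), it only carries out the $L^2$ step in \cref{PropertiesOf_A_onL2} and leaves the $L^\infty$ step as a bare statement. So there is no in-paper argument to compare yours against; what you have written supplies the missing proof, and it is correct. Your strategy is exactly the one the paper's preamble calls for --- deduce the $L^\infty$ result from the $L^2$ resolvent --- and it is the standard Cazenave--Haraux route for the Laplacian, with the one genuinely new ingredient being the advection term. Your handling of it is sound: testing $u-\lambda A_2u=f$ against $(u-k)^+$ with $k=\Vert f\Vert_\infty$, the first-order term contributes $\int_J\nabla u\,(u-k)^+\,dx=\tfrac12\int_J\nabla\big[((u-k)^+)^2\big]dx=0$ by $1$-periodicity, the diffusion term has a sign, and the remaining terms give $\Vert(u-k)^+\Vert_2^2\leq\langle f-k,(u-k)^+\rangle_2\leq0$, whence $\Vert u\Vert_\infty\leq\Vert f\Vert_\infty$; reading this bound on $f:=u-\lambda A_\infty u$ for $u\in\cD(A_\infty)\subset\cD(A_2)$ gives (mD1), and the one-dimensional bootstrap $u\in H^2(J)\hookrightarrow C^1(J)$, then $\Delta u=(u+\lambda\nu\nabla u-f)/\lambda\fD\in L^\infty(J)$, correctly places the resolvent image in $\cD(A_\infty)$ for (mD2). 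Two cosmetic remarks: the paper is inconsistent about the sign of the advection ($A=-\nu\nabla+\fD\Delta$ in \cref{DeterministicModel} versus $\fD\Delta+\nu\nabla$ in the appendix title versus $\fD\Delta-\nu\nabla$ in the definition of $A_2$), but your argument is insensitive to this since the transport contribution vanishes identically; and your a priori bound in fact shows $\Vert J_\lambda\Vert_{L^\infty\to L^\infty}\leq1$, which is slightly more than (mD1) requires and is what the subsequent (also unproved) $C(J)$ step would need.
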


\subsection{Proof of \Cref{DiscreteApproximationOfTheLimit}}\label{ProofOfDiscreteApproximationOfTheLimit}

Since $\tilde{A}_N=\text{diag}(0,0,0,-\nu\nabla_N+\fD\Delta_N)$ is linear, it is Lipschitz. Next, the vector field $F=(F_S,F_I,F_R,F_B)$ is locally Lipschitz continuous. Thus, the initial value problem \eqref{DiscreteVersionOfPDE} has a unique local solution $v^N$, thanks to the Picard-Lindelöf theorem. That solution satisfies \eqref{MildRepresentationOfTheDiscreteVersion}. The bound \eqref{BoundOfTheDiscreteVersion} is obtained from \eqref{ReactionDebitBoundProperties} (iii) and Gronwall lemma as in the previous sections, and we deduce that $v^N$ is in fact a global solution.

Now, let $T>0$ be fixed. From \eqref{BoundednessOfTheTruncatedProcess}, we may assume that $F$ is globally Lipschitz and we choose $L$ such that $\Vert F(u)-F(\tilde{u})\Vert_{\bold C(J)}\leq L\Vert u-\tilde{u}\Vert_{\bold C(J)}$, provided $\Vert u\Vert_{\bold C(J)}\leq c_T$. 
In the rest of the proof, $c$ denotes a generic constant that may depend upon $T$, $c_2$ and $L$.
From \eqref{MildRepresentationOfDeterministicModel} and \eqref{MildRepresentationOfTheDiscreteVersion}, we have
\[ v^N(t)-v(t)=\tilde{T}_N(t)\tilde{P}_Nv(0)-\tilde{T}(t)v(0)+\int_0^t\left(\tilde{T}_N(t-s)F(v^N(s))-\tilde{T}(t-s)F(v(s))\right) ds \]
for all $t\geq0$. Then, \\ 

$\displaystyle\hspace{0.5cm} \left\Vert v^N(t)-v(t)\right\Vert_{\infty,\infty}\leq \left\Vert\tilde{T}_N(t)\tilde{P}_Nv(0)-\tilde{T}(t)v(0)\right\Vert_{\bold C(J)}$\vspace{0.1cm}\par 

$\displaystyle \hspace{5cm}+\int_0^t\left\Vert\tilde{T}_N(t-s)\big(F(v^N(s)) - \tilde{P}_N F(v(s))\big)\right\Vert_{\bold C(J)}ds$\vspace{0.1cm}\par 

$\displaystyle \hspace{5cm}+\int_0^t\left\Vert\tilde{T}_N(t-s)\tilde{P}_N F(v(s))-\tilde{T}(t-s)F(v(s))\right\Vert_{\bold C(J)}ds$.\\

\noindent Observing that $\bold H^N$ is stable by $F$ and the projection $\tilde{P}_N$ is a contracting linear operator on $\bold C_p(J)$, the second and third terms on the r.h.s. of the above inequality satisfy
\[ T_2(N) \leq \int_0^t\left\Vert \tilde{P}_N\big(F(v^N(s))-F(v(s))\big)\right\Vert_{\bold C(J)} ds \leq cL\int_0^t\left\Vert v^N(s)-v(s)\right\Vert_{\bold C(J)}ds, \] and\par 
$\displaystyle\hspace{1.5cm} T_3(N) = \int_0^t\left\Vert T_N(t-s)P_NF(v(s))-T(t-s)F(v(s))\right\Vert_\infty ds$.\\ 

\noindent Thus, 
taking the supremum in $t$ on $[0,T]$ and using Gronwall lemma leads to\\

\noindent $\displaystyle\hspace{1cm} \sup_{t\leq T}\left\Vert v^N(t)-v(t)\right\Vert_{\infty,\infty}$\par 
$\displaystyle\hspace{1.3cm}\leq \left(\sup_{t\leq T}\left\Vert\tilde{T}_N(t)\tilde{P}_Nv(0)-\tilde{T}(t)v(0)\right\Vert_{\bold C(J)}\right.$\par 

$\displaystyle \hspace{2.3cm} \left.+\int_0^{T}\sup_{t\leq T}\left(\left\Vert T_N(t-s)P_NF(v(s))-T(t-s)F(v(s))\right\Vert_\infty\mathds{1}_{(s\leq t)}\right)ds\right)\times \text{e}^{cLT}$.\\

\noindent Firstly, \[ \sup_{t\leq T}\left\Vert\tilde{T}_N(t)\tilde{P}_Nv(0)-\tilde{T}(t)v(0)\right\Vert_{\bold C(J)}\longrightarrow0, \]
since $v(0)=v_0\in \big[C(J)\big]^3\times C^3(J)$ yields $\Vert \tilde{A}_N\tilde{P}_Nv_0-\tilde{A}v_0\Vert_E\rightarrow0$ (see \cite{Kato1966}, chapter 9, section 3). 
Secondly, we fix $t\in [0,T]$ and $s\in [0,t]$. Since $F\big(v(s)\big)\in \big[C(J)\big]^3\times C^3(J)$, the same argument as previously yields \[ \sup_{t\leq T}\left\Vert T_N(t-s)P_NF(v(s))-T(t-s)F(v(s))\right\Vert_{\bold C(J)}\longrightarrow0, \] and we conclude by dominated convergence. $\square$

\vspace{1cm}
\bibliographystyle{alpha}
\bibliography{cholera_modeling_LLN_LDP}

\end{document}